\DeclareMathOperator{\Spec}{\mathsf{Spec}}
\DeclareMathOperator{\pr}{\mathsf{pr}}
\DeclareMathOperator{\id}{\mathsf{id}}
\DeclareMathOperator{\sgn}{\mathsf{sgn}}
\DeclareMathOperator{\sHom}{\mathcal{H}\textit{om}}
\DeclareMathOperator{\Ext}{\mathsf{Ext}}
\DeclareMathOperator{\End}{End}
\DeclareMathOperator{\Coh}{\mathsf{Coh}}
\DeclareMathOperator{\Ho}{\mathsf H}
\DeclareMathOperator{\Quot}{\mathsf{Quot}}
\DeclareMathOperator{\Flag}{\mathsf{Flag}}
\DeclareMathOperator{\rk}{\mathsf{rk}}
\DeclareMathOperator{\supp}{\mathsf{supp}}
\DeclareMathOperator{\triv}{\mathsf{triv}}
\newcommand{\llb}{\llbracket}
\newcommand{\rrb}{\rrbracket}
\newcommand{\llp}{\llparenthesis}
\newcommand{\rrp}{\rrparenthesis}
\newcommand{\wcQ}{\widetilde{ \mathcal Q}}
\newcommand{\wcK}{\widetilde{ \mathcal K}}
\newcommand{\D}{{\mathsf D}}
\DeclareMathOperator{\Aut}{\mathsf{Aut}}
\DeclareMathOperator{\Pic}{\mathsf{Pic}}
\newcommand{\cH}{{\mathcal H}}
\newcommand{\pt}{\mathsf{pt}}
\newcommand{\IC}{\mathbb{C}}
\newcommand{\IN}{\mathbb{N}}
\newcommand{\IP}{\mathbb{P}}
\newcommand{\IZ}{\mathbb{Z}}
\DeclareMathOperator{\VB}{\mathsf{VB}}
\DeclareMathOperator{\FM}{\mathsf{FM}}
\newcommand{\leqnomode}{\tagsleft@true}
\newcommand{\reqnomode}{\tagsleft@false}
\let\ker\relax
\DeclareMathOperator{\ker}{\mathsf{ker}}
\let\dim\relax
\DeclareMathOperator{\dim}{\mathsf{dim}}
\let\min\relax
\DeclareMathOperator{\min}{\mathsf{min}}
\newcommand{\sym}{\mathfrak S}
\newcommand{\cF}{\mathcal F}
\newcommand{\cG}{\mathcal G}
\newcommand{\cE}{\mathcal E}
\newcommand{\cP}{\mathcal P}
\newcommand{\cQ}{\mathcal Q}
\newcommand{\cK}{\mathcal K}
\newcommand{\cS}{\mathcal S}
\newcommand{\reg}{\mathcal O}
\renewcommand{\theta}{\vartheta}
\renewcommand{\rho}{\varrho}
\renewcommand{\phi}{\varphi}
\renewcommand{\_}{\underline{\,\,\,\,}}
\newtheorem{theorem}{Theorem}[section]
\newaliascnt{conjecture}{theorem}
\newtheorem{conjecture}[conjecture]{Conjecture}
  \newaliascnt{proposition}{theorem}
  \newtheorem{prop}[proposition]{Proposition}
  \newaliascnt{lemma}{theorem}
  \newtheorem{lemma}[lemma]{Lemma}
  \newaliascnt{corollary}{theorem}
  \newtheorem{cor}[corollary]{Corollary}
\theoremstyle{definition}
  \newaliascnt{definition}{theorem}
  \newaliascnt{remark}{theorem}
  \newtheorem{remark}[remark]{Remark}
  \newaliascnt{condition}{theorem}
\newaliascnt{convention}{theorem}
 \newtheorem{convention}[convention]{Convention}
  \newaliascnt{question}{theorem}
  \newaliascnt{example}{theorem}
\begin{document}

\title[Tautological Bundles on Punctual Quot Schemes of Curves]{Extension Groups of Tautological Bundles on Punctual Quot Schemes of Curves}
\author[A. Krug]{Andreas Krug}
\address{
Institut f\"ur algebraische Geometrie,
Gottfried Wilhelm Leibniz Universit\"at Hannover,
Welfengarten 1,
30167 Hannover,
Germany
}
\email{krug@math.uni-hannover.de}

 \maketitle

\begin{abstract}
We prove formulas for the cohomology and the extension groups of tautological bundles on punctual Quot schemes over complex smooth projective curves. As a corollary, we show that the tautological bundle determines the isomorphism class of the original vector bundle on the curve. We also give a vanishing result for the push-forward along the Quot--Chow morphism of tensor and wedge products of duals of tautological bundles.   
\end{abstract}

\section{Introduction}

Tautological bundles on symmetric powers of curves have been intensively studied for more than half a century; see \cite{Schwarzenberger--bundlesplane, Schwarzenberger--secant, Mattuck--sym}. Given a smooth projective curve $C$ over the complex numbers $\IC$, a positive integer $d\in \IN$ and a vector bundle $F\in \VB(C)$, the associated tautological bundle $F^{[n]}$ on $C^{(d)}=C^d/\sym_d$ is the rank $d\cdot \rk F$ bundle with fibres 
\[
 F^{[d]}(x_1+\dots x_d)=\Ho^0(F_{\mid x_1+\dots +x_d})
\]
where $x_1+\dots+ x_d$ stands for both, a point in $C^{(d)}$ and the associated effective divisor on $C$. More formally, $F^{[d]}$ is constructed via the Fourier--Mukai transform along a universal family; see \autoref{subsect:SdC}. The reason for the interest in these bundles is at least twofold. On the one hand, they play an important role in the geometry of the symmetric product $C^{(d)}$. In particular, the cotangent bundle is given by $\Omega_{C^{(d)}}\cong (\Omega_C)^{[d]}$; see \cite[(9.1)]{Macdonald--sym}. On the other hand, a lot of information on the curve $C$ and vector bundles on it can be extracted from the tautological bundles; see, for example, the proof of the Gonality Conjecture \cite{Ein-Lazarsfeld--gonalityconj}. One result concerning tautological bundles that is probably well-known, and which we will use later in this introduction, is the following formula for their cohomology (see \autoref{prop:cohtautSdC}):
\begin{equation}\label{eq:tautSnCH}
 \Ho^*\bigl(C^{(d)}, F^{[d]}\bigr)\cong \Ho^*(F)\otimes S^{d-1}\Ho^*(\reg_C) \,.
\end{equation}
In recent years, there has been a lot of research concerning punctual Quot schemes; see e.g.\ \cite{MR4220750, MR4372634, Gango-Sebastian, MR4228267, Toda--Quotsod}, as well as the references that will play a role in the further course of this introduction. Given a fixed vector bundle $E\in \VB(C)$, the Quot scheme $\Quot_d(E)$ parametrises quotients $E\twoheadrightarrow Q$ where $Q$ is a zero-dimensional coherent sheaf of length $\ell(Q)=d$. There is the \emph{Quot--Chow morphism} sending a zero-dimensional quotient sheaf to its weighted support 
\[
 \mu\colon \Quot_d(E)\to C^{(d)}\quad,\quad \mu\left([E\twoheadrightarrow Q]\right)=\sum_{x\in \supp Q}\ell(Q_x)\,. 
\]
For $E=L$ a line bundle, $\mu$ is an isomorphism, so $\Quot_d(L)$ gives nothing new. Hence, from now on, we make the general assumption that
\begin{equation}\label{eq:rankassumption}
 \rk E\ge 2\,,
\end{equation}
which will also be necessary for most of our results. The Quot scheme $\Quot_d(E)$ is always smooth of dimension $d\cdot \rk(E)$; see \cite[Lem.\ 2.2]{BFP} or \cite{Monavari-Ricolfi--lisse}. In analogy to the symmetric products, every vector bundle $F\in \VB(C)$ induces a \emph{tautological} vector bundle $F^{\llb d\rrb}$ of rank $d\cdot \rk F$ on $\Quot_d(E)$ with fibres 
\[
 F^{\llb d\rrb}\bigl([E\twoheadrightarrow Q]\bigr)= \Ho^0(F_{\mid Q})\,. 
\]
Again, the formal definition uses the Fourier--Mukai transform along the universal family; see \autoref{subsect:Quottaut}. Oprea and Sinha \cite{Oprea-Sinha--Euler} computed formulas for Euler characteristics of many tensor and wedge products of tautological bundles, and their duals, associated to line bundles on $C$.
The author expects these formulas to lift to the individual cohomology groups as follows (see \cite[Quest.\ 20]{Oprea-Sinha--Euler} for a more modest formulation).

\begin{conjecture}\label{conj:main}
 Let $L, M_1,\dots, M_m\in\Pic C$ with $0\le m< \rk E$. Then, for $d\in \IN$, $0\le \ell\le d$, and $0\le k_i\le d$, we have
 \begin{equation}\label{eq:mainconj}
  \Ext^*\left(\bigotimes_{i=1}^m \wedge^{k_i} M_i^{\llb d \rrb}, \wedge^\ell L^{\llb d\rrb}\right)\cong \left(\bigotimes_{i=1}^m S^{k_i}\Ext^*(M_i, L)   \right)\otimes \left(\wedge^{\ell-k} \Ho^*(E\otimes L)\right)\otimes\left(S^{d-\ell}\Ho^*(\reg_C)\right)
 \end{equation}
where $k:=\sum_{i=1}^m k_i$.
\end{conjecture}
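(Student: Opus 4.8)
The plan is to push everything forward along the Quot--Chow morphism $\mu\colon\Quot_d(E)\to C^{(d)}$ and to reduce to the cohomology formula \eqref{eq:tautSnCH} on the symmetric product and its K\"unneth-type generalisations. Since $\bigotimes_i\wedge^{k_i}M_i^{\llb d\rrb}$ is locally free, the left-hand side of \eqref{eq:mainconj} equals $\Ho^*\bigl(\Quot_d(E),\,\bigotimes_i\wedge^{k_i}(M_i^{\llb d\rrb})^\vee\otimes\wedge^\ell L^{\llb d\rrb}\bigr)$, and by the Leray spectral sequence for $\mu$ this is governed by the complex
\[
 \cR\ :=\ R\mu_*\Bigl(\bigotimes_{i=1}^m\wedge^{k_i}(M_i^{\llb d\rrb})^\vee\otimes\wedge^\ell L^{\llb d\rrb}\Bigr)\ \in\ \Db\bigl(C^{(d)}\bigr).
\]
The aim is first to identify $\cR$ with an explicit vector bundle on $C^{(d)}$ --- the descent from $C^d$ of a box product of bundles on $C$, (anti)symmetrised block by block --- and then to compute its cohomology.

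Over the dense open locus $C^{(d)}_\circ\subseteq C^{(d)}$ of reduced divisors the fibre of $\mu$ over $x_1+\dots+x_d$ is $\IP(E_{x_1})\times\dots\times\IP(E_{x_d})$, with the $a$-th factor parametrising rank-one quotients of $E_{x_a}$, and this family is controlled by the $\sym_d$-cover $(C^d)_\circ\to C^{(d)}_\circ$. On such a fibre $M^{\llb d\rrb}$ restricts to $\bigoplus_a\pr_a^*\bigl(\reg_{\IP(E_{x_a})}(1)\otimes M_{x_a}\bigr)$, so each $\wedge^{k_i}(M_i^{\llb d\rrb})^\vee$ and $\wedge^\ell L^{\llb d\rrb}$ breaks up over subsets of $\{1,\dots,d\}$, and the whole tensor product restricts to a direct sum of line bundles $\bigl(\boxtimes_a\reg_{\IP(E_{x_a})}(j_a)\bigr)\otimes(\text{line bundle from }C)$, where $j_a$ is the number of surviving $L$-factors at $x_a$ minus the number of surviving $M_i^\vee$-factors at $x_a$; thus $-m\le j_a\le1$. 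The hypothesis $m<\rk E$ forces $j_a>-\rk E$, so each $\reg_{\IP^{\rk E-1}}(j_a)$ has no intermediate cohomology and $R\mu_*$ is computed factor by factor by the Bott formula: a summand survives precisely when $j_a\ge0$ for all $a$, i.e.\ when the index sets $T_1,\dots,T_m$ of the $M_i^\vee$-factors are pairwise disjoint and each contained in the index set $S$ of the $L$-factors (so $k\le\ell$, as the vanishing of $\wedge^{\ell-k}$ for $k>\ell$ predicts). For a surviving configuration the $a$-th factor contributes $(M_i^\vee\otimes L)_{x_a}$ if $a\in T_i$, the $(\rk E)$-dimensional space $(E\otimes L)_{x_a}$ if $a\in S\setminus\bigcup_iT_i$, and $\reg_{C,x_a}$ otherwise. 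Hence $\cR|_{C^{(d)}_\circ}$ is the descent of $\bigoplus_{\text{configs}}\boxtimes_a(\text{these per-point bundles})$, i.e.\ the restriction to $C^{(d)}_\circ$ of the descent from $C^d$ of a box product built out of $M_i^\vee\otimes L$, $E\otimes L$ and $\reg_C$ with the block structure prescribed by \eqref{eq:mainconj}.

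The crux is to propagate this over all of $C^{(d)}$: one must show that $\cR$ is a vector bundle (no higher direct images survive and the Leray spectral sequence degenerates), whence it is pinned down by its restriction to the dense open $C^{(d)}_\circ$. The obstacle is that the fibres of $\mu$ over non-reduced divisors are products of punctual Quot schemes $\Quot_{d'}\bigl(\reg_C^{\oplus\rk E}\bigr)$ supported at a point, which are singular and of large dimension; what one needs is a vanishing theorem for the cohomology of tensor and wedge products of duals of tautological bundles restricted to these fibres --- equivalently, for the higher direct images of $\cR$ --- and this is the genuinely hard input. I would attack it by a fibrewise cohomology computation on the punctual Quot schemes via their stratification into monomial cells (in the spirit of the known computations on punctual Hilbert schemes), or, more cheaply, by showing the higher cohomology sheaves of $\cR$ are supported on the discriminant of $C^{(d)}$ and ruling them out by a codimension/depth estimate on the smooth variety $C^{(d)}$ together with cohomology and base change. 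This is where the argument currently stalls for general data, which is why \eqref{eq:mainconj} remains a conjecture: the Bott-type vanishing on the deep strata becomes combinatorially delicate once several distinct $M_i$ and a large $\ell$ are present, although for $m\le1$, or for small $\ell$, the boundary analysis is manageable.

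Granting the identification of $\cR$, one concludes by computing its cohomology. Being the descent from $C^d$ of a box product of $M_i^\vee\otimes L$, $E\otimes L$ and $\reg_C$, antisymmetrised or symmetrised within each block, $\Ho^*(C^{(d)},\cR)$ is extracted by a K\"unneth decomposition as the appropriate $\sym_d$-isotypic part of $\bigl(\bigotimes_i\Ho^*(M_i^\vee\otimes L)^{\otimes k_i}\bigr)\otimes\Ho^*(E\otimes L)^{\otimes(\ell-k)}\otimes\Ho^*(\reg_C)^{\otimes(d-\ell)}$; this is precisely \eqref{eq:tautSnCH} together with its external generalisation. The parity of the (super) symmetric powers in \eqref{eq:mainconj} is then forced: the block of points carrying an $M_i^\vee$ paired with an $L$ lies simultaneously inside $\wedge^{k_i}(M_i^{\llb d\rrb})^\vee$ and $\wedge^\ell L^{\llb d\rrb}$, hence is antisymmetrised twice, i.e.\ symmetrised, giving $S^{k_i}\Ext^*(M_i,L)$; the block of points carrying an unpaired $L$ comes only from $\wedge^\ell L^{\llb d\rrb}$, hence is antisymmetrised once, giving $\wedge^{\ell-k}\Ho^*(E\otimes L)$; and the block of inactive points lies in no exterior power, hence is symmetrised, giving $S^{d-\ell}\Ho^*(\reg_C)$. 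Assembling the three factors yields \eqref{eq:mainconj}.
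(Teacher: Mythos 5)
You should first note that the statement you are proving is stated in the paper as \autoref{conj:main}, i.e.\ as a \emph{conjecture}: the paper proves only the cases $\ell=0$, $(m,\ell)=(0,1)$ and $(m,\ell)=(1,1)$, and explicitly says that the cases $\ell\ge 2$ remain open. Your proposal does not close this gap either, and in fact you say so yourself: the ``crux'' step --- controlling $R\mu_*$ of the product of (duals of) tautological bundles over the non-reduced divisors, equivalently the cohomology of these bundles on the singular punctual fibres $\Quot_D$ --- is exactly where your argument stops, and it is exactly the hard content of the conjecture. Computing the restriction of $\cR=R\mu_*(\cdots)$ to the open locus $C^{(d)}_\circ$ of reduced divisors via Bott vanishing on $\IP(E_{x_1})\times\dots\times\IP(E_{x_d})$ is fine as a plausibility check (it recovers the shape of the right-hand side of \eqref{eq:mainconj}, including the vanishing for $k>\ell$ and the parity bookkeeping), but it is evidence, not proof.

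There is also a second, unacknowledged gap in your plan: even granting that $\cR$ is a vector bundle, it is \emph{not} ``pinned down by its restriction to the dense open $C^{(d)}_\circ$.'' The complement of $C^{(d)}_\circ$ is the discriminant, a divisor in $C^{(d)}$, so a locally free sheaf is very far from being determined by its restriction to $C^{(d)}_\circ$ (one can, for instance, twist by multiples of the discriminant divisor without changing that restriction); no reflexivity or codimension-$2$ extension argument applies. So even after the desired vanishing on the deep strata you would still need an independent identification of $\cR$ across the discriminant. For comparison, in the cases the paper does prove it avoids any fibrewise or stratum-by-stratum analysis: it pulls everything back to the flag scheme $\Flag_d(E)$, which is a globally defined iterated projective bundle (\autoref{sect:McKay}), uses the short exact sequences of \autoref{lem:tautses} to induct on $d$, and transports the answer back via $\pi^*\circ R\mu_*\cong R\nu_*\circ Lf^*$ (\autoref{prop:McKay}) together with faithful flatness of $\pi$. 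If you want to push your strategy toward the full conjecture, that global Flag-scheme induction --- rather than restriction to the reduced locus plus extension --- is the mechanism the paper indicates should eventually handle all $\ell$, with the $\ell\ge 2$ cases requiring more involved computations there.
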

The symmetric and wedge powers in \eqref{eq:mainconj} are the graded ones, taking into account the odd and even parts of the cohomology groups; see e.g.\ \cite[Sect.\ 2.2]{Krug--curveExt} for details. Also, a wedge power with a negative exponent is interpreted as $0$. This means that, for $k=\sum_{i=1}^m k_i>\ell$, \autoref{conj:main} predicts the vanishing of
the whole $\Ext^*\bigl(\bigotimes_{i=1}^m \wedge^{k_i} M_i^{\llb d \rrb}, \wedge^\ell L^{\llb d\rrb}\bigr)$.

In the special case that $C\cong \IP^1$ is the projective line and $E\cong \reg_{\IP^1}^{\oplus \rk E}$, parts of the $\ell=0$ case and all of the $m=0$ case of \autoref{conj:main} were proven in \cite{MOS}. The case $m=0$ gives a formula for the cohomology for all wedge powers of $L^{\llb d\rrb}$.    

\subsection*{Summary of results}
In this paper, we work with an arbitrary smooth projective curve $C$ and an arbitrary $E\in \VB(C)$ of $\rk E\ge 2$. We prove the case $\ell=0$ (for arbitrary $m$, $M_i$ and $k_i$) of \autoref{conj:main}, where it predicts a vanishing of the extension groups; see \eqref{eq:cohdualformula}, \eqref{eq:Ldualvanish}. Furthermore, we prove the case $(m,\ell)=(0,1)$, giving the formula \eqref{eq:cohformula} for the cohomology of a tautological bundle, and the case $(m,\ell)=(1,1)$, giving the formula \eqref{eq:Extformula} for the extension groups between two tautological bundles. Our formulas hold more generally for tautological bundles  associated to vector bundles of arbitrary rank, not just line bundles. In the cases $(m,\ell)=(0,1)$ and $(m,\ell)=(1,1)$ this generalisation just means that we are allowed to replace the line bundles in \eqref{eq:mainconj} by arbitrary vector bundles. In the $\ell=0$ case, the generalisation to arbitrary rank is more subtle; see \autoref{thm:maindualvanish} and \autoref{rem:numcondition}.

Our first result describes the relationship of tautological bundles on the Quot scheme and tautological bundles on the symmetric product. It leads to the formula for the cohomology of the tautological bundles on $\Quot_d(E)$. 
\begin{theorem}[\autoref{thm:tautmupush} \& \autoref{cor:tautcoh}]\label{thm:maincoh}
For every $F\in \VB(C)$ and every $d\in \IN$,
\[
 R\mu_*F^{\llb d\rrb}\cong (E\otimes F)^{[d]}\,.
\]
This means that $\mu_*F^{\llb d\rrb}\cong (E\otimes F)^{[d]}$ and $R^i\mu_*F^{\llb d\rrb}\cong 0$ for $i>0$.
In particular, \eqref{eq:tautSnCH} gives 
\begin{equation}\label{eq:cohformula}
\Ho^*\bigl(\Quot_d(E), F^{\llb d\rrb}\bigr)\cong \Ho^*(E\otimes F)\otimes S^{d-1}\Ho^*(\reg_C) \,.
\end{equation}
\end{theorem}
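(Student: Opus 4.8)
The plan is to establish the push-forward formula $R\mu_* F^{\llb d\rrb}\cong (E\otimes F)^{[d]}$ by comparing the two Fourier–Mukai constructions over a common universal family, and then to deduce the cohomology formula \eqref{eq:cohformula} by composing with the derived global sections functor on $C^{(d)}$ and invoking \eqref{eq:tautSnCH}.

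First I would set up the geometry. Let $\cZ\subset \Quot_d(E)\times C$ be the universal subscheme with structure sheaf $\ko_{\cZ}$ fitting into $0\to \ker\to p_{\Quot}^*E\to \ko_{\cZ}\otimes p_C^*(\text{something})\to 0$; more precisely, let $\cQ$ be the universal quotient sheaf on $\Quot_d(E)\times C$, flat and finite of degree $d$ over $\Quot_d(E)$, so that by definition $F^{\llb d\rrb} = p_{\Quot *}(\cQ\otimes p_C^* F)$ (the higher direct images vanish since $\cQ$ is finite over the base). On the symmetric-product side, let $\Xi\subset C^{(d)}\times C$ be the universal divisor, with $\ko_\Xi$ flat of degree $d$ over $C^{(d)}$, so that $G^{[d]} = q_{C^{(d)} *}(\ko_\Xi\otimes q_C^* G)$ for $G\in\VB(C)$. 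The Quot–Chow morphism $\mu$ is characterised, on the level of universal families, by the fact that $(\mu\times\id_C)_*\cQ$ and $\ko_\Xi$ have the same support with multiplicities; the key input I would use is a \emph{sheaf-level} comparison, namely that pushing $\cQ$ forward along $\mu\times\id_C$ recovers $\ko_\Xi$ tensored with (the pullback of) $E$ in the appropriate derived sense. Concretely, I expect the identity
\[
 R(\mu\times\id_C)_*\, \cQ \;\cong\; \ko_\Xi\otimes q_C^* E
\]
in $\Db(C^{(d)}\times C)$, with no higher direct images. Granting this, the projection formula and flat base change along the fibre square relating $p_{\Quot}, p_C, q_{C^{(d)}}, q_C$ and $\mu$ give
\[
 R\mu_* F^{\llb d\rrb} = R\mu_* Rp_{\Quot *}(\cQ\otimes p_C^*F)
 \cong Rq_{C^{(d)} *}\bigl(R(\mu\times\id_C)_*\cQ\otimes q_C^*F\bigr)
 \cong Rq_{C^{(d)} *}\bigl(\ko_\Xi\otimes q_C^*(E\otimes F)\bigr) = (E\otimes F)^{[d]},
\]
which is exactly the claimed formula, and it shows simultaneously that $R^i\mu_* F^{\llb d\rrb}=0$ for $i>0$.

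The main obstacle is proving the universal-family identity $R(\mu\times\id_C)_*\cQ\cong \ko_\Xi\otimes q_C^*E$. Over the locus in $C^{(d)}$ of reduced, distinct points this is transparent: $\mu$ is an iterated Grassmann-type bundle (a point of $\Quot_d(E)$ over $x_1+\dots+x_d$ is a choice of one-dimensional quotient of $E_{x_i}$ at each point, so the fibre is $\prod(\IP(E_{x_i}^\vee))$-like), and the fibrewise cohomology of $\cQ$ along $\mu$ is just $\bigoplus_i E_{x_i}$, the stalk of $\ko_\Xi\otimes q_C^*E$. To get the statement globally I would either (a) check that $\mu$ has rational fibres with $R\mu_*\ko_{\Quot}=\ko_{C^{(d)}}$ and that $\cQ$ is, locally over $C^{(d)}\times C$, a pullback of a Grassmannian tautological quotient twisted by $E$, then compute the direct image on the universal Grassmannian stratification; or (b) argue by a flatness/base-change reduction: both sides are flat over $C^{(d)}$ (the left side because $\cQ$ is flat over $\Quot_d(E)$ which has $R\mu_*\ko=\ko$, the right because $\ko_\Xi$ is flat over $C^{(d)}$), they agree over the dense reduced locus, and a torsion-free/reflexivity argument on the smooth variety $C^{(d)}$ plus a length count in each fibre forces the isomorphism everywhere. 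I would favour route (b), using that $\Quot_d(E)$ and $C^{(d)}$ are smooth and $\mu$ is proper birational-to-rationally-connected-fibred, so $R\mu_*\ko_{\Quot}\cong\ko_{C^{(d)}}$, and then identifying the two flat families of length-$d$ sheaves on $C$ parametrised by $C^{(d)}$ by their behaviour on the reduced locus together with a semicontinuity bound on fibre dimensions.

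Finally, the cohomology formula is immediate: apply $R\glob(C^{(d)}, -)$ to $R\mu_* F^{\llb d\rrb}\cong (E\otimes F)^{[d]}$, use $R\glob\circ R\mu_* \cong R\glob(\Quot_d(E),-)$, and substitute \eqref{eq:tautSnCH} with the vector bundle $E\otimes F$ in place of $F$ to obtain
\[
 \Ho^*\bigl(\Quot_d(E),F^{\llb d\rrb}\bigr)\cong \Ho^*\bigl(C^{(d)},(E\otimes F)^{[d]}\bigr)\cong \Ho^*(E\otimes F)\otimes S^{d-1}\Ho^*(\reg_C),
\]
which is \eqref{eq:cohformula}. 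The only point needing a word is that \eqref{eq:tautSnCH}, as recalled in \autoref{prop:cohtautSdC}, is stated for tautological bundles of arbitrary vector bundles on $C$, not merely line bundles, so it applies verbatim to $E\otimes F$.
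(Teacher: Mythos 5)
Your formal derivation coincides with the paper's: the paper reduces \autoref{thm:maincoh} to the universal-family identity $R(\id_C\times \mu)_*\cQ_d\cong \bigl(E\boxtimes\reg_{C^{(d)}}\bigr)_{\mid \Xi_d}$ (its \autoref{prop:BGS}), then concludes exactly by your chain of isomorphisms (commutativity of the square relating the two projections, projection formula along $\id\times\mu$), and deduces \eqref{eq:cohformula} from \autoref{prop:cohtautSdC} applied to $E\otimes F$, as you do. The crucial difference is that the paper does \emph{not} prove this identity; it cites it from Biswas--Gangopadhyay--Sebastian \cite[Cor.\ 9.3]{BGS}, whereas you attempt to prove it yourself, and that attempt is where your proposal has a genuine gap.

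Your preferred route (b) does not close. First, you never construct a comparison morphism between $R(\mu\times\id_C)_*\cQ$ and $\reg_{\Xi_d}\otimes q_C^*E$; two sheaves that agree over the dense locus of reduced divisors, even granting flatness of both over $C^{(d)}$, need not be isomorphic, and the invoked ``torsion-free/reflexivity argument plus length count'' is not something one can run as stated on sheaves supported on the divisor $\Xi_d$. Second, and more seriously, the vanishing of the higher direct images and the flatness of the push-forward are themselves the hard content: over a non-reduced cycle $D$ the fibre $\Quot_D$ of $\mu$ has positive dimension $d(\rk E-1)$, and one must show that the higher cohomology of $\cQ$ restricted to these fibres vanishes and identify the degree-zero part; your justification (``the left side is flat because $\cQ$ is flat over $\Quot_d(E)$ which has $R\mu_*\reg=\reg$'') is not a valid inference, and indeed $R\mu_*\reg_{\Quot_d}\cong\reg_{C^{(d)}}$ is itself a nontrivial result of \cite{BGS}. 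In \cite{BGS} these points are handled by resolving the fibres $\Quot_D$ by the iterated projective bundles $S_{\vec x}$ (the spaces the present paper globalises to $\Flag_d$ in \autoref{sect:McKay}) and computing push-forwards there. So either cite \cite[Cor.\ 9.3]{BGS} for the key identity, as the paper does, or supply an argument of that kind; with that input in hand, the remainder of your proof is correct and is the paper's argument.
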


Biswas, Gangopadhyay, and Sebastian \cite{BGS} computed the cohomology of the tangent bundle of the punctual Quot scheme. \autoref{thm:maincoh} is a rather easy corollary of one of their auxiliary results, describing the push-forward of the universal quotient sheaf along the Quot--Chow morphism.

Next, we give a quite general vanishing result for products of duals of tautological bundles.

\begin{theorem}[\autoref{thm:dualvanish}]\label{thm:maindualvanish}
 Let $F_1,\dots, F_m\in \VB(C)$, and let $1\le k_i\le d\cdot\rk F_i=\rk F_i^{\llb d \rrb}$ such that $\sum_{i=1}^m\min\{k_i, \rk F_i\}<\rk E$.
Then, for all $d\in \IN$,
\[
 R\mu_*\left(\bigotimes_{i=1}^m\wedge^{k_i} F_i^{\llb d\rrb\vee}\right)\cong 0\,.
\]
In particular,
\begin{equation}\label{eq:cohdualformula}
 \Ho^*\Bigl(\Quot_d(E),\bigotimes_{i=1}^m\wedge^{k_i} F_i^{\llb d\rrb\vee}\Bigr)\cong 0\,.
\end{equation}
\end{theorem}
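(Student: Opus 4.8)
The plan is to reduce the vanishing of $R\mu_*\bigl(\bigotimes_i \wedge^{k_i} F_i^{\llb d\rrb\vee}\bigr)$ to a fibrewise/local computation on the target $C^{(d)}$, using that $\mu$ is a proper morphism and that the relevant sheaves are vector bundles on the smooth variety $\Quot_d(E)$. First I would describe the universal quotient $0\to \kk\to E\boxtimes\reg\to \kq\to 0$ on $C\times\Quot_d(E)$, which exhibits $F^{\llb d\rrb}$ as $\pr_{Q*}(\kq\otimes \pr_C^*F)$ and shows $F^{\llb d\rrb}$ is a quotient of a trivial-along-$C$ bundle; dually, $F^{\llb d\rrb\vee}$ injects into a bundle pulled back from a global object. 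The key structural input is \autoref{thm:maincoh} (in the form $R\mu_*F^{\llb d\rrb}\cong (E\otimes F)^{[d]}$) together with the local description of $\mu$: étale-locally on $C^{(d)}$ near a reduced divisor $x_1+\dots+x_d$, the Quot scheme $\Quot_d(E)$ looks like a product of copies of $\Quot_1(E)\cong \IP(E\dual)$-type fibres, more precisely a tower/product of projective bundles, one $\IP^{\rk E-1}$-factor for each point of the support (with the punctual strata contributing the well-understood local Quot schemes of a single point). Over such a chart the tautological bundle $F^{\llb d\rrb}$ restricts to (a successive extension built from) the pullbacks of the tautological quotient bundles on these $\IP^{\rk E-1}$ factors, twisted by $F$.

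The heart of the argument is then the following linear-algebra vanishing on a single projective space. On $\IP:=\IP(V\dual)$ with $V=\rk E$-dimensional, let $\kq$ be the rank-$(r-1)$ tautological quotient $V\otimes\reg\epi\kq$ where $r=\rk E$. One must show that for $1\le k<r$ (and more generally, after incorporating the $F_i$, for $\sum_i\min\{k_i,\rk F_i\}<\rk E$), all cohomology of $\wedge^k\kq\dual=\wedge^k\Omega_\IP(1)^{\text{-type bundle}}$ and of tensor products of several such wedge powers vanishes. This is exactly where the hypothesis $\sum_i\min\{k_i,\rk F_i\}<\rk E$ enters: it guarantees we stay below the rank of the quotient bundle so that the relevant wedge power is not the full determinant (which would have cohomology), and the Bott-type vanishing on $\IP^{r-1}$ applies. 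Concretely I would use the Koszul/Euler sequence $0\to\reg(-1)^{\oplus?}\to\dots$, or rather the dual tautological sequence $0\to\kq\dual\to V\dual\otimes\reg\to\reg(1)\to 0$, take wedge powers to get $0\to\wedge^k\kq\dual\to\wedge^k V\dual\otimes\reg\to\wedge^{k-1}\kq\dual(1)\to 0$, and induct on $k$ to compute $H^*(\IP,\wedge^k\kq\dual)$, obtaining $0$ for $1\le k\le r-1$; the tensor-product case follows by iterating over the factors and a Künneth argument on the product chart, with the $\min\{k_i,\rk F_i\}$ truncation coming from the fact that $\wedge^{k_i}F_i^{\llb d\rrb\vee}$ over a single point of the support only involves $\wedge^{\le \rk F_i}$ of the relevant summand.

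Finally I would globalise: since the statement $R\mu_*(-)\cong 0$ is local on $C^{(d)}$, the étale-local vanishing just established implies $R\mu_*\bigl(\bigotimes_i\wedge^{k_i}F_i^{\llb d\rrb\vee}\bigr)=0$, and then the Leray spectral sequence for $\mu$ (or simply $H^*(\Quot_d(E),-)=H^*(C^{(d)},R\mu_*(-))$) gives \eqref{eq:cohdualformula}. The main obstacle I anticipate is making the local product/tower description of $(\Quot_d(E),\mu)$ and the restriction of the tautological bundles to it sufficiently precise to run the Künneth argument — in particular handling the punctual strata (where the support point has multiplicity $>1$ and the local Quot scheme is not just a projective space) and checking that the tautological bundle's associated-graded pieces on those strata still only involve wedge powers below the critical rank. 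This is presumably where one either invokes the detailed local analysis from \cite{BGS} or argues by a degeneration/filtration to the reduced locus, whose complement has positive codimension, combined with a base-change and semicontinuity argument to conclude the vanishing of the higher direct images everywhere.
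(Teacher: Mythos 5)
Your plan works over the open locus of reduced divisors, but the step you yourself flag as the ``main obstacle'' is a genuine gap, and the fallback you propose does not close it. Vanishing of $R\mu_*$ is indeed local on $C^{(d)}$, and over a neighbourhood of a reduced divisor the product/K\"unneth computation is fine (though note a misidentification: the restriction of $F^{\llb 1\rrb}$ to a fibre $\IP^{\rk E-1}$ of $\mu_1$ is $F_x\otimes\reg(1)$, not the rank-$(\rk E-1)$ tautological quotient bundle, so the relevant Bott-type input is just $\Ho^*(\IP^{\rk E-1},\reg(-k))=0$ for $0<k<\rk E$, with $\wedge^{k}F_x^\vee=0$ for $k>\rk F$ producing the $\min\{k_i,\rk F_i\}$ truncation). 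However, over the non-reduced divisors the fibres $\Quot_D$ are not products of projective spaces, and neither ``positive codimension of the complement'' nor ``semicontinuity and base change'' can be used to propagate the vanishing there: upper semicontinuity only allows fibre cohomology to jump \emph{up} at special points, and the higher direct images $R^{i}\mu_*$ for $i>0$ can perfectly well be torsion sheaves supported exactly on the discriminant locus, no matter its codimension. So an actual cohomology computation on (or above) the punctual fibres is unavoidable, and your proposal does not supply one beyond a pointer to \cite{BGS}.

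This is precisely what the paper's proof provides, in a globalised form. Instead of working fibrewise, it introduces the full flag scheme $\Flag_d(E)$, built inductively as a $\IP^{\rk E-1}$-bundle $p\colon\Flag_d=\IP(\wcK_{d-1})\to C\times\Flag_{d-1}$, together with the Flag--Quot morphism $f_d$ and $\nu_d\colon\Flag_d\to C^d$. Using $Rg_*\reg_{\Flag_d}\cong\reg_{Y_d}$ (\autoref{lem:gpush}, which globalises the Gangopadhyay--Sebastian/BGS analysis of the fibres $S_{\vec x}\to\Quot_D$) one gets $\pi^*\circ R\mu_*\cong R\nu_*\circ Lf^*$ (\autoref{prop:McKay}), so it suffices to kill $R\nu_{d*}$ of the pulled-back bundles. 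There the short exact sequence \eqref{eq:tautbundlesesdual} of \autoref{lem:tautses} gives a filtration of $\bigotimes_i\wedge^{k_i}F_i^{\langle d\rangle\vee}$ whose graded pieces are $p^*(\cdot)\otimes\reg_p(-\sum_i\ell_i)$ with $0\le\ell_i\le\min\{k_i,\rk F_i\}$, and the hypothesis $\sum_i\min\{k_i,\rk F_i\}<\rk E$ enters exactly through $Rp_*\reg_p(\ell)=0$ for $-\rk E<\ell<0$ (\autoref{lem:nupush}); the piece with all $\ell_i=0$ is handled by induction on $d$ (\autoref{prop:nudualvanish}), and faithful flatness of $\pi$ transfers the vanishing back to $R\mu_*$. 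If you want to salvage your fibrewise strategy, the honest route is the fibrewise version of the same thing: replace each punctual fibre $\Quot_D$ by the iterated projective bundle $S_{\vec x}$ with $Rg_{\vec x*}\reg_{S_{\vec x}}\cong\reg_{\Quot_D}$ and redo the filtration argument there --- which is essentially the paper's proof, done less efficiently.
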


Let us highlight two special cases, namely those where all $\min\{k_i, \rk F_i\}=1$, either since $k_i=1$ or since $\rk F_i=1$.

\begin{cor}\label{cor:dualvanish}
Let $1\le m< \rk E$.
\begin{enumerate}
 \item For any $F_1,\dots , F_m\in \VB(C)$, we have
 $R\mu_*\bigl(\bigotimes_{i=1}^m F_i^{\llb d\rrb\vee}\bigr)\cong 0$.
In particular, for a single vector bundle $F\in \VB(C)$, we have $R\mu_*(F^{\llb d\rrb \vee})\cong 0$.
\item For any $M_1,\dots , M_m\in \Pic(C)$, we have
$R\mu_*\bigl(\bigotimes_{i=1}^m\wedge^{k_i} M_i^{\llb d\rrb\vee}\bigr)\cong 0$
for all $1\le k_i\le d$. In particular, we get the $\ell=0$ case of \autoref{conj:main}, namely
\begin{equation}\label{eq:Ldualvanish}
 \Ho^*\left(\Quot_d(E),\bigotimes_{i=1}^m\wedge^{k_i} M_i^{\llb d\rrb\vee}\right)\cong 0\,.
\end{equation}
 \end{enumerate}
\end{cor}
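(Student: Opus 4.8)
The plan is to deduce both parts directly from \autoref{thm:maindualvanish}, so the only work is to check that the numerical hypothesis of that theorem is met in each case. For part (i) I would take $k_i=1$ for every $i$: then $1\le k_i=1\le d\cdot\rk F_i=\rk F_i^{\llb d\rrb}$ holds trivially (using $d\ge 1$ and $\rk F_i\ge 1$), and $\min\{k_i,\rk F_i\}=\min\{1,\rk F_i\}=1$, so that $\sum_{i=1}^m\min\{k_i,\rk F_i\}=m<\rk E$ by the assumption $m<\rk E$. Hence \autoref{thm:maindualvanish} gives $R\mu_*\bigl(\bigotimes_{i=1}^m\wedge^1 F_i^{\llb d\rrb\vee}\bigr)=R\mu_*\bigl(\bigotimes_{i=1}^m F_i^{\llb d\rrb\vee}\bigr)\cong 0$. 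The assertion for a single bundle $F$ is the case $m=1$, for which the requirement $1=m<\rk E$ is exactly the standing assumption \eqref{eq:rankassumption}.

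For part (ii) I would take $F_i=M_i\in\Pic(C)$, so that $\rk F_i=1$. Then the hypothesis $1\le k_i\le d\cdot\rk F_i=d$ is precisely the assumption $1\le k_i\le d$, and $\min\{k_i,\rk F_i\}=\min\{k_i,1\}=1$ because $k_i\ge 1$; therefore $\sum_{i=1}^m\min\{k_i,\rk F_i\}=m<\rk E$, and \autoref{thm:maindualvanish} yields $R\mu_*\bigl(\bigotimes_{i=1}^m\wedge^{k_i} M_i^{\llb d\rrb\vee}\bigr)\cong 0$.

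Next I would deduce the cohomology vanishings from the vanishings of $R\mu_*$ using the Leray isomorphism $\Ho^*\bigl(\Quot_d(E),\kf\bigr)\cong\Ho^*\bigl(C^{(d)},R\mu_*\kf\bigr)$, whose right-hand side is $0$ as soon as $R\mu_*\kf\cong 0$. Finally, to see that \eqref{eq:Ldualvanish} is indeed the $\ell=0$ instance of \autoref{conj:main}, I would invoke the standard identities $(A\otimes B)^\vee\cong A^\vee\otimes B^\vee$ and $(\wedge^k V)^\vee\cong\wedge^k(V^\vee)$ for vector bundles to rewrite $\bigotimes_{i=1}^m\wedge^{k_i} M_i^{\llb d\rrb\vee}\cong\bigl(\bigotimes_{i=1}^m\wedge^{k_i} M_i^{\llb d\rrb}\bigr)^\vee$, so that for any $L\in\Pic C$ one has $\Ho^*\bigl(\Quot_d(E),\bigotimes_{i=1}^m\wedge^{k_i} M_i^{\llb d\rrb\vee}\bigr)\cong\Ext^*\bigl(\bigotimes_{i=1}^m\wedge^{k_i} M_i^{\llb d\rrb},\wedge^0 L^{\llb d\rrb}\bigr)$. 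On the right-hand side of \eqref{eq:mainconj} with $\ell=0$ the factor $\wedge^{\ell-k}\Ho^*(E\otimes L)$ has negative exponent $\ell-k=-\sum_{i=1}^m k_i<0$ (since $m\ge 1$ and each $k_i\ge 1$) and hence vanishes, so the conjectural value is $0$, matching \eqref{eq:Ldualvanish}.

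There is no genuine obstacle here: all of the mathematical content sits in \autoref{thm:maindualvanish}, and what remains is the elementary bookkeeping of the inequalities $\min\{k_i,\rk F_i\}\le k_i$ together with the dualisation identities above. The only mildly delicate point is to record explicitly that the right-hand side of \eqref{eq:mainconj} is $0$ when $\ell=0$, which is immediate from the convention on negative wedge powers stated just after \autoref{conj:main}.
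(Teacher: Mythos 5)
Your proposal is correct and is exactly the argument the paper intends: the corollary is stated as an immediate specialisation of \autoref{thm:maindualvanish}, with part (i) coming from $k_i=1$ and part (ii) from $\rk M_i=1$, so that $\sum_i\min\{k_i,\rk F_i\}=m<\rk E$, plus the standard Leray and dualisation identifications you spell out. No gaps; your bookkeeping matches the paper's (unwritten) proof.
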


Probably our main result is a formula for the push-forwards of the Hom-bundles between two tautological bundles, and a resulting formula for the extension groups.

\begin{theorem}[\autoref{thm:muHom}]\label{thm:mainExt}
For every $d\in \IN$ and all $F,G\in \VB(C)$, we have
\[
 R\mu_*\sHom(F^{\llb d\rrb}, G^{\llb d\rrb})\cong \sHom(F,G)^{[d]}
\]
In particular, due to \eqref{eq:tautSnCH}, we have
\begin{equation}\label{eq:Extformula}
\Ext^*(F^{\llb d\rrb}, G^{\llb d\rrb})\cong \Ext^*(F,G)\otimes S^{d-1}\Ho^*(\reg_C) \,.
\end{equation}
\end{theorem}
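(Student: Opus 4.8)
The plan is to reduce the statement to the previously established push-forward formulas, using the universal family on $\Quot_d(E)$ to express $\sHom(F^{\llb d\rrb}, G^{\llb d\rrb})$ in terms of Fourier--Mukai kernels, and then to compute $R\mu_*$ termwise. First I would set up notation: let $\mathcal{Z}\subset \Quot_d(E)\times C$ be the universal subscheme with structure sheaf $\ko_{\mathcal Z}$, so that $F^{\llb d\rrb}=\pr_{1*}(\pr_2^*F\otimes \ko_{\mathcal Z})$, and likewise for $G$. The key input is a description of $\sHom(F^{\llb d\rrb}, G^{\llb d\rrb})$ as a Fourier--Mukai-type pushforward of $\pr_{2}^*\sHom(F,G)$ against the kernel $\sHom(\ko_{\mathcal Z}, \ko_{\mathcal Z})$ (or the appropriate derived version $R\sHom(\ko_{\mathcal Z},\ko_{\mathcal Z})$ living on $\Quot_d(E)\times C\times C$ after the usual diagonal manipulations). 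Since the universal quotient $\ko_{\mathcal Z}$ is flat of length $d$ over $\Quot_d(E)$, this Hom-sheaf is again flat over the base, so no higher derived terms appear and the fibrewise description $\sHom(F^{\llb d\rrb},G^{\llb d\rrb})([E\epi Q])\cong \Hom(F_{\mid Q}, G_{\mid Q})\cong \Ho^0(\sHom(F,G)_{\mid Q})$ holds, identifying the left side as a tautological-type bundle built from $\sHom(F,G)$ but with respect to the length-$d$ quotients of $E$, not of $\ko_C$.

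The next step is where \autoref{thm:maincoh} (i.e.\ \autoref{thm:tautmupush}) enters. That theorem computes $R\mu_* H^{\llb d\rrb}\cong (E\otimes H)^{[d]}$ for any $H\in\VB(C)$, using the description of $R\mu_*\ko_{\mathcal Z}$ on $C^{(d)}\times C$ due to Biswas--Gangopadhyay--Sebastian. The point is that $\sHom(F^{\llb d\rrb},G^{\llb d\rrb})$ is \emph{not} of the form $H^{\llb d\rrb}$, but its pushforward along $\mu$ can still be computed by base change from that same description of $R\mu_*\ko_{\mathcal Z}$ — equivalently, one uses the compatibility of $\mu$ with the two projections to $C$ and a flat base change / projection formula argument to reduce $R\mu_*\sHom(F^{\llb d\rrb},G^{\llb d\rrb})$ to an expression in $R\mu_*\ko_{\mathcal Z}$ tensored with $\pr_2^*\sHom(F,G)$. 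I expect the upshot to be that $R\mu_*\sHom(F^{\llb d\rrb},G^{\llb d\rrb})$ recovers exactly $\sHom(F,G)^{[d]}$ on $C^{(d)}$, where the latter is the tautological bundle on the symmetric product associated to the vector bundle $\sHom(F,G)$ — and crucially \emph{without} the extra $E\otimes(-)$ twist, because the two copies of $E$ coming from the $\Hom$ cancel, or more precisely because $\sHom(\ko_{\mathcal Z},\ko_{\mathcal Z})$ pushes forward to the structure sheaf of the diagonal-type locus in $C^{(d)}\times C$ rather than to $E\otimes R\mu_*\ko_{\mathcal Z}$. I would verify this cancellation fibrewise first (over a general point $x_1+\dots+x_d$ with distinct $x_i$, where $Q=\bigoplus k(x_i)$ and $\Hom(F_{\mid Q},G_{\mid Q})=\bigoplus \Hom(F_{x_i},G_{x_i})=\bigoplus \sHom(F,G)_{x_i}$), and then promote it to the universal statement using flatness and the fact that both sides are vector bundles of the same rank $d\cdot\rk F\cdot\rk G$ agreeing on a dense open set, together with the torsion-freeness / reflexivity that the FM-description guarantees.

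The main obstacle will be the step of rigorously computing $R\mu_*$ of the internal Hom sheaf rather than of a plain tautological bundle: one must handle $R\sHom(\ko_{\mathcal Z},\ko_{\mathcal Z})$, which a priori has higher $\sExt$-sheaves, and argue that after pushing forward along $\mu$ (which is not finite, and is only an isomorphism over the open locus of reduced divisors) the higher terms do not contribute and no higher $R^i\mu_*$ survives. I anticipate this requires either (a) a local computation on the fibres of $\mu$ over the non-reduced locus showing the relevant $\sExt$'s vanish after pushforward, or (b) a cleaner argument identifying $\sHom(F^{\llb d\rrb},G^{\llb d\rrb})$ directly as $\pr_{1*}$ of a sheaf flat over the base and invoking cohomology-and-base-change on the composite $\Quot_d(E)\times C\to C^{(d)}\times C\to C^{(d)}$. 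Once $R\mu_*\sHom(F^{\llb d\rrb},G^{\llb d\rrb})\cong\sHom(F,G)^{[d]}$ is established, the formula \eqref{eq:Extformula} is immediate: $\Ext^*(F^{\llb d\rrb},G^{\llb d\rrb})=\Ho^*(\Quot_d(E),\sHom(F^{\llb d\rrb},G^{\llb d\rrb}))=\Ho^*(C^{(d)},R\mu_*\sHom(F^{\llb d\rrb},G^{\llb d\rrb}))=\Ho^*(C^{(d)},\sHom(F,G)^{[d]})$, and applying \eqref{eq:tautSnCH} to the bundle $\sHom(F,G)$ gives $\Ext^*(F,G)\otimes S^{d-1}\Ho^*(\reg_C)$, using $\Ho^*(C,\sHom(F,G))=\Ext^*(F,G)$.
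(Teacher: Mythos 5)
Your reduction step is fine and matches the paper's in spirit: the statement does come down to computing the push-forward along $\id\times\mu\times\id$ of a Hom-type Fourier--Mukai kernel built from the universal quotient (in the paper this is \autoref{cor:HomtautFM}, with kernel $\pr_{12}^*\cQ_d\otimes\pr_{32}^*\cQ_d^R$ on $C\times\Quot_d\times C$), and you correctly guess the answer, namely the structure sheaf of the incidence locus $\Gamma_{(d)}$, so that the projection formula then yields $\sHom(F,G)^{[d]}$ and \eqref{eq:Extformula} follows from \eqref{eq:tautSnCH} exactly as you say. But the justification you offer for that key identification is a genuine gap. You propose to deduce it from \autoref{thm:tautmupush} ``by base change / projection formula from $R(\id\times\mu)_*\cQ_d$'': this cannot work, because the kernel is a tensor product of two sheaves pulled back along the two \emph{different} projections involving the $\Quot$ factor, and its push-forward is not determined by the push-forward of one factor. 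Your fallback --- check the formula fibrewise over reduced divisors and then ``promote it'' because both sides are bundles of the same rank agreeing on a dense open set, using torsion-freeness/reflexivity --- is not a proof: a priori $R\mu_*\sHom(F^{\llb d\rrb},G^{\llb d\rrb})$ is a complex, and even granting that it is a locally free sheaf, generic agreement does not extend across the discriminant locus. The instructive counterexample is built into the paper: for $\rk E=1$ (so $\Quot_d\cong C^{(d)}$) the same fibrewise computation over reduced divisors goes through, yet the conclusion \eqref{eq:Extformula} is false --- by \cite{Krug--curveExt} the extension groups then depend on cup and Yoneda products on $C$. So the entire content of the theorem sits over the non-reduced locus, exactly where your argument is silent.

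What the paper actually does there occupies \autoref{sect:McKay} and \autoref{subsect:Ext}: it replaces $\Quot_d$ by the full flag scheme $\Flag_d$, an iterated $\IP^{\rk E-1}$-bundle, shows $\pi^*\circ R\mu_*\cong R\nu_*\circ Lf^*$ (\autoref{prop:McKay}), and proves by induction on $d$ --- via the short exact sequence \eqref{eq:sesinduction}, the $3\times 3$ diagram of exact triangles \eqref{eq:convodiagbefore}, and the corner computations of \autoref{lem:maincomp} --- that the lifted kernel pushes forward to $\reg_{\Gamma_d}$ on $C\times C^d\times C$ (\autoref{prop:FMpush}); even then, pinning down the two boundary maps requires the $\sym_d$-linearisation/descent arguments of \autoref{lem:twolin} and the adjunction-unit section of \autoref{lem:RFsect}, before descending $\sym_d$-equivariantly to $\reg_{\Gamma_{(d)}}$ (\autoref{prop:FMSdC}). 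Note also that the hypothesis $\rk E\ge 2$ enters precisely through $Rp_*\reg_p(-1)\cong 0$ in this induction (\autoref{lem:nupush}(i), used in \autoref{lem:maincomp}(ii) and, via \autoref{prop:nudualvanish}, in \autoref{lem:QRvanish}); your outline never uses this hypothesis, which is another sign that the proposed local-to-global step cannot be correct as stated.
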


Without assumption \eqref{eq:rankassumption} that $\rk E\ge 2$, \autoref{thm:mainExt} does not hold. Indeed, if $E=L$ is a line bundle, then $\mu$ is an isomorphism. But if, for example, $\rk F=\rk G=2$, then  
$\sHom(F^{\llb 2\rrb}, G^{\llb 2\rrb})$ has rank $16$, while $\sHom(F,G)^{[2]}$ only has rank $8$.

For tautological bundles on symmetric powers of curves (or, equivalently, on Quot schemes of quotients of line bundles), the extension groups between tautological bundles were described in \cite{Krug--curveExt} by means of a spectral sequence. However, there is not a simple closed formula relating the extension groups on $C^{(d)}$ to those of $C$. Instead, the extension groups on $C^{(d)}$ depend on more input data, namely certain cup and Yoneda products on $C$. 

Hence, maybe surprisingly, \autoref{thm:mainExt} shows that the situation becomes a lot easier under the assumption $\rk E\ge 2$. In fact, \eqref{eq:Extformula} looks closer to the result for Hilbert schemes of points on \emph{surfaces}, instead of curves; see \cite{KruExt} (or \cite{Krug--remarksMcKay} for a simplified proof of the result).
See also \cite[Sect.\ 1.2]{Oprea-Sinha--Euler} for some further parallels between punctual Quot schemes on curves and Hilbert schemes of points on surfaces.

We also get a result closely related to \autoref{thm:mainExt} in terms of the \emph{tautological functor}. This is the Fourier--Mukai transform $T_d=\FM_{\cQ_d}\colon \D(C)\to \D(\Quot_d)$ which on vector bundles $F\in \VB(C)$ is given by $T_d(F)\cong F^{\llb d \rrb}$.

\begin{theorem}[\autoref{thm:convolution}]\label{thm:mainfunctor}
Let $R_d\colon \D(\Quot_d)\to \D(C)$ be the right-adjoint of $T_d$. Then
\[
R_d\circ T_d \cong (\_)\otimes_{\IC} S^{d-1}\Ho^*(\reg_C)\,.
\]
\end{theorem}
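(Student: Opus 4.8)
The plan is to deduce \autoref{thm:mainfunctor} from \autoref{thm:mainExt} by a Yoneda-type argument: once the right adjoint $R_d$ is in hand, the adjunction $T_d\dashv R_d$ turns the $\Ext$-computation \eqref{eq:Extformula} into a statement about the object $R_dT_dF$ itself. First I would record that $R_d$ is again a Fourier--Mukai transform: $T_d=\FM_{\cQ_d}$ has kernel the universal quotient sheaf $\cQ_d$ on $C\times\Quot_d(E)$, which is a perfect complex, and since $C$ and $\Quot_d(E)$ are smooth and projective, $T_d$ admits a right adjoint, again a Fourier--Mukai transform, $R_d=\FM_{\cK_d}$ with kernel $\cK_d=\cQ_d^\vee\lotimes\pr_{\Quot_d}^*\omega_{\Quot_d}[\dim\Quot_d]$ on $\Quot_d(E)\times C$. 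Hence $R_d\circ T_d\colon\D(C)\to\D(C)$ is a Fourier--Mukai transform — in particular exact and compatible with shifts — and so is the target functor $\Phi:=(\_)\otimes_{\IC}S^{d-1}\Ho^*(\reg_C)$, whose kernel is $\Delta_*\reg_C\otimes_{\IC}S^{d-1}\Ho^*(\reg_C)$. It is therefore enough to produce a natural isomorphism $R_d\circ T_d\cong\Phi$.

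The heart of the matter is the following chain of isomorphisms, valid for $F,G\in\VB(C)$. Combining, in turn, the adjunction $T_d\dashv R_d$; the fact that $R\sHom(G^{\llb d\rrb},F^{\llb d\rrb})$ is the vector bundle $\sHom(G^{\llb d\rrb},F^{\llb d\rrb})$; the Leray spectral sequence for $\mu$; the sheaf-level form of \autoref{thm:mainExt}, i.e.\ $R\mu_*\sHom(G^{\llb d\rrb},F^{\llb d\rrb})\cong\sHom(G,F)^{[d]}$; and the derived enhancement of \eqref{eq:tautSnCH} applied to the bundle $\sHom(G,F)$, one obtains
\begin{align*}
 R\Hom_{\D(C)}(G,R_dT_dF) &\cong R\Hom_{\D(\Quot_d)}\bigl(G^{\llb d\rrb},F^{\llb d\rrb}\bigr)\cong R\Gamma\bigl(C^{(d)},\sHom(G,F)^{[d]}\bigr)\\
 &\cong R\Hom_{\D(C)}(G,F)\otimes_{\IC}S^{d-1}\Ho^*(\reg_C),
\end{align*}
and the last term equals $R\Hom_{\D(C)}(G,\Phi F)$ since tensoring over $\IC$ with the graded vector space $S^{d-1}\Ho^*(\reg_C)$ (which carries the zero differential) commutes with $R\Hom$. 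Passing to cohomology here reproduces \eqref{eq:Extformula} verbatim; this is both a consistency check and an indication that \autoref{thm:mainfunctor} is, up to the naturality point below, simply the Yoneda-dual of \autoref{thm:mainExt}.

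Assuming the isomorphisms in this chain are natural in $F$ and $G$, a dévissage extends them to all $F,G\in\D(C)$ — every object is built from shifts of vector bundles by finitely many distinguished triangles, and $R\Hom$ is exact in each variable, so a five-lemma argument applies — yielding, for every fixed $F\in\D(C)$, a natural isomorphism of the complex-valued functors $R\Hom_{\D(C)}(\_,R_dT_dF)\cong R\Hom_{\D(C)}(\_,\Phi F)$ on $\D(C)$, in particular of the Hom-functors $\Hom_{\D(C)}(\_,R_dT_dF)\cong\Hom_{\D(C)}(\_,\Phi F)$. Yoneda's lemma then gives a canonical isomorphism $R_dT_dF\cong\Phi F$, and tracking $F$ through the construction shows it is natural in $F$; this is the asserted isomorphism $R_d\circ T_d\cong\Phi$.

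The one genuinely delicate ingredient is the naturality, in $F$ and $G$ and at the level of complexes (not just cohomology), of the isomorphisms in the displayed chain: \autoref{thm:mainExt} and \eqref{eq:tautSnCH} are stated as bare isomorphisms, but the Yoneda step requires them to be induced by morphisms that are functorial in the bundle arguments. I would address this by inspecting the proof of \autoref{thm:mainExt}, whose isomorphism $R\mu_*\sHom(F^{\llb d\rrb},G^{\llb d\rrb})\cong\sHom(F,G)^{[d]}$ is realised by a canonical comparison map assembled from base-change and projection-formula morphisms for the universal quotient $\cQ_d$ (and the corresponding maps for $\cQ_d^\vee$), all visibly functorial in $F$ and $G$; and similarly \eqref{eq:tautSnCH} descends from the $\sym_d$-equivariant K\"unneth isomorphism along the quotient $C^d\to C^{(d)}$, hence is functorial. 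As an alternative that avoids Yoneda entirely, one can compute the Fourier--Mukai kernel of $R_d\circ T_d$ directly: convolving the kernels of $T_d$ and $R_d$ over $C\times\Quot_d(E)\times C$ and invoking relative Serre duality for the two projections to $C\times C$ reduces the problem to the push-forward of the universal quotient sheaf along the Quot--Chow morphism used in \autoref{thm:maincoh}, after which one recognises the resulting complex on $C\times C$ as $\Delta_*\reg_C\otimes_{\IC}S^{d-1}\Ho^*(\reg_C)$; this is more computational but needs no functoriality bookkeeping.
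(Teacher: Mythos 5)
Your primary route is genuinely different from the paper's and is workable, but only because you lean on \autoref{thm:mainExt} in its \emph{sheaf-level, natural} form. The paper does not deduce \autoref{thm:mainfunctor} from \autoref{thm:mainExt} at all: both are siblings of the single kernel computation \autoref{prop:FMSdC}, namely $(\id\times \mu_d\times \id)_*\bigl(\pr_{12}^*\cQ_d\otimes \pr_{32}^*\cQ_d^R\bigr)\cong \reg_{\Gamma_{(d)}}$, and \autoref{thm:mainfunctor} is obtained in one line from it: push forward along $\pr_{13}$ to get $\cQ_d\star\cQ_d^R\cong \reg_\Delta\otimes_\IC S^{d-1}\Ho^*(\reg_C)$ (\autoref{thm:convolution}) and apply \autoref{lem:RF}(ii). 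This avoids all naturality bookkeeping, since an isomorphism of Fourier--Mukai kernels gives an isomorphism of functors outright. Your Yoneda argument, by contrast, stands or falls with the point you yourself flag: \eqref{eq:Extformula} as a bare isomorphism of graded vector spaces is not enough, and you must upgrade \autoref{thm:muHom} and \eqref{eq:tautSnCH} to isomorphisms natural in both arguments as objects of $\D(C)$. Your proposed resolution is in fact correct: the isomorphism of \autoref{thm:muHom} is assembled from base change, projection formula and Grothendieck--Verdier duality applied to constructions functorial in $F,G$, together with the fixed isomorphism of \autoref{prop:FMSdC} which does not involve $F,G$; likewise \autoref{prop:cohtautSdC} is the K\"unneth isomorphism along the finite map $b$, natural in the bundle. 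Since all these steps hold verbatim for arbitrary complexes, the cleaner way to finish is to observe that your chain is defined and natural on all of $\D(C)$, rather than defining it only on bundles and invoking a five-lemma d\'evissage -- a natural transformation given only on a generating class does not automatically extend, so as written that step is slightly shaky, though easily repaired.

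One concrete caution about your ``alternative that avoids Yoneda'': as sketched it would not work as a shortcut. The convolution $\cQ_d\star\cQ_d^R=\pr_{13*}\bigl(\pr_{12}^*\cQ_d\otimes\pr_{32}^*\cQ_d^R\bigr)$ does \emph{not} reduce, via Serre duality and base change, to the push-forward of $\cQ_d$ along the Quot--Chow morphism used in \autoref{thm:maincoh} (i.e.\ \autoref{prop:BGS}); the tensor factor $\pr_{32}^*\cQ_d^R$ is not pulled back from the base, so no projection-formula trick applies. What is actually needed is precisely \autoref{prop:FMSdC}, whose proof via the flag variety (\autoref{prop:FMpush}) is the technical heart of the paper. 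So your viable proof is the Yoneda one resting on \autoref{thm:mainExt} plus the naturality upgrade; the ``direct kernel'' route is the paper's, and it is not cheaper than \autoref{thm:mainExt} itself.
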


As a simple corollary, we get the following reconstruction result.

\begin{cor}
For any two vector bundles $F,G\in \VB(C)$ and any $d\in \IN$, we have 
\[
F^{\llb d\rrb}\cong G^{\llb d\rrb} \quad\Longrightarrow \quad F\cong G\,. 
\]
\end{cor}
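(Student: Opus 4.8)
The plan is to obtain this as a direct formal consequence of \autoref{thm:mainfunctor} (equivalently, of \autoref{thm:mainExt}). First I would unwind the hypothesis: by construction of the tautological functor $T_d$, which sends $F\in\VB(C)$ to $F^{\llb d\rrb}$, an isomorphism $F^{\llb d\rrb}\cong G^{\llb d\rrb}$ in $\VB(\Quot_d)$ is the same thing as an isomorphism $T_d(F)\cong T_d(G)$ in $\D(\Quot_d)$. Applying the right adjoint $R_d$ to this isomorphism gives $R_dT_d(F)\cong R_dT_d(G)$ in $\D(C)$, and \autoref{thm:mainfunctor} identifies both sides, yielding
\[
F\otimes_{\IC}S^{d-1}\Ho^*(\reg_C)\;\cong\; G\otimes_{\IC}S^{d-1}\Ho^*(\reg_C)\quad\text{in }\D(C)\,.
\]

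The remaining step is to recover the individual bundles $F$ and $G$ from these complexes by extracting cohomology in the right degree. Since $C$ is a connected smooth projective curve, $\Ho^*(\reg_C)=\Ho^0(\reg_C)\oplus\Ho^1(\reg_C)$ is concentrated in degrees $0$ and $1$, with $\Ho^0(\reg_C)\cong\IC$ one-dimensional. Hence the graded symmetric power $S^{d-1}\Ho^*(\reg_C)$ is a finite-dimensional graded vector space supported in non-negative degrees, and its degree-zero component is $S^{d-1}\Ho^0(\reg_C)\cong S^{d-1}(\IC)\cong\IC$ (the higher pieces being the exterior powers $\wedge^q\Ho^1(\reg_C)$ placed in degree $q$). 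Consequently, tensoring a vector bundle $F$, regarded in cohomological degree $0$, with $S^{d-1}\Ho^*(\reg_C)$ produces a complex that is a direct sum of non-negative shifts of $F$, with the degree-zero shift occurring exactly once; therefore $\cH^0\bigl(F\otimes_{\IC}S^{d-1}\Ho^*(\reg_C)\bigr)\cong F$, and likewise for $G$. Applying $\cH^0$ to the displayed isomorphism then yields $F\cong G$.

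I do not expect a genuine obstacle here: all the substance lies in \autoref{thm:mainfunctor}, and the only point that needs a moment of care is the elementary observation that the degree-zero part of $S^{d-1}\Ho^*(\reg_C)$ is one-dimensional, which is precisely what allows one to isolate $F$ (respectively $G$) by passing to $\cH^0$. An alternative, slightly more hands-on route would be to apply \autoref{thm:mainExt} to the pair $(F^{\llb d\rrb},G^{\llb d\rrb})$ and to its reverse, and then chase the images of the identity morphisms through the resulting isomorphisms of $\Ext^0$-groups to build mutually inverse maps $F\to G$ and $G\to F$; but the adjunction argument above is cleaner and I would present that one.
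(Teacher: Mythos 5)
Your proposal is correct and coincides with the paper's own argument: apply the right adjoint $R_d$ to the isomorphism $T_d(F)\cong T_d(G)$, invoke \autoref{thm:mainfunctor}, and then take $\cH^0$, using that the degree-zero part of $S^{d-1}\Ho^*(\reg_C)$ is $S^{d-1}\Ho^0(\reg_C)\cong\IC$. The only difference is that you spell out the graded structure of $S^{d-1}\Ho^*(\reg_C)$ a bit more explicitly than the paper does, which is harmless.
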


\begin{proof}
To the given isomorphism $T_d(F)\cong F^{\llb d\rrb}\cong G^{\llb d\rrb} \cong T_d(G)$, we apply $R_d$. By \autoref{thm:mainfunctor}, this gives 
\[
F\otimes_{\IC} S^{d-1}\Ho^*(\reg_C)\cong G\otimes_{\IC} S^{d-1}\Ho^*(\reg_C) 
\]
As $\Ho^0(\reg_C)\cong \IC$, looking at the degree zero part (i.e.\ applying $\cH^0(\_)$) gives $F\cong G$.
\end{proof}

On $C^{(d)}$, it depends on the curve $C$ whether or not the isomorphism class of $F$ is determined by $F^{[n]}$, with a full answer not known yet for $g(C)\ge 2$; see \cite[Discussion below Thm.\ 1.3]{Krug--reconstruction}, \cite[Prop.\ 2.1]{Biswas-Nagaraj--reconstructionsurfaces}. So we observe that, again, the situation becomes easier for $\rk E\ge 2$.

\subsection*{Strategy of the Proofs}

Our approach to proving \autoref{thm:maindualvanish}, \autoref{thm:mainExt}, and \autoref{thm:mainfunctor} is inspired by an approach that was very fruitful in the study of tautological bundles on symmetric products of curves \cite{Mistretta--stab, Krug--stab, Krug--curveExt}, and also on Hilbert schemes of points on surfaces \cite{Sca1, Sca2, MR4074590, Stapletontaut, KruExt, KrugTensortaut, Krug--remarksMcKay}: Translate everything to (equivariant) sheaves on the cartesian product, where the computations often become easier. In the curve case, this is simply done by the pull-back along the quotient morphism $\pi\colon C^{(d)}\to C^{[d]}$. In the surface case, one uses the derived McKay correspondence $\D(X^{[d]})\cong \D_{\sym_d}(X^d)$ of Bridgeland--King--Reid \cite{BKR} and Haiman \cite{Hai}.

A first idea to handle the tautological bundles on $\Quot_d(E)$ in a similar way would be to use the fibre product $Y_d:=\Quot_d(E)\times_{C^{(d)}} C^d$. However, we do not have a good description of $Y_d$, neither of the pull-backs of tautological sheaves to it. The successful approach is to use $\Flag_d(E)$, the variety parametrising full flags of zero-dimensional quotients
\begin{equation}\label{eq:fullflagintro}
 E\twoheadrightarrow Q_d \twoheadrightarrow Q_{d-1}\twoheadrightarrow \dots \twoheadrightarrow Q_1 \twoheadrightarrow Q_0=0
\end{equation}
with $\ell(Q_i)=i$. As explained at the beginning of \autoref{sect:McKay}, our use of $\Flag_d(E)$ can de regarded as a global version of the approach used in \cite{BGS} to study the fibres of the Quot--Chow morphism $\mu\colon \Quot_d(E)\to C^{(d)}$. There is a \emph{Flag--Quot} morphism $\Flag_d(E)\to \Quot_d(E)$, sending a flag \eqref{eq:fullflagintro} to its top quotient $Q_d$, and also a morphism $\Flag_d(E)\to C^d$ sending \eqref{eq:fullflagintro} to the ordered tuple $\bigr(\supp(\ker(Q_i\twoheadrightarrow Q_{i-1}) \bigl)_{i=1}^d$ of the supports of its factors. The reason that the approach using the Flag variety works well is:
\begin{enumerate}
 \item There is a simple construction of $\Flag_d(E)$ as an iterated projective bundle; see \autoref{sect:McKay}.
 \item There is a simple description, perfectly suited for induction arguments, of the pull-backs of tautological sheaves to $\Flag_d(E)$; see \autoref{lem:tautses}.
 \item We do not lose much information if we pull back sheaves along the Flag--Chow morphism to $\Flag_d(E)$; see \autoref{cor:gfaith} and \autoref{prop:McKay}.
\end{enumerate}
The author is quite optimistic that the approach to pull back everything to $\Flag_d(E)$ should eventually lead to a proof of all of \autoref{conj:main}. However, the cases with $\ell\ge 2$ seem to require more complicated computations.  

\subsection*{Structure of the Paper}  

In \autoref{sect:basics}, we recall some basic definitions and facts on symmetric powers and punctual Quot schemes 
of curves, and on tautological bundles on these varieties. In the short \autoref{sect:coh}, we deduce \autoref{thm:maincoh} from a result of \cite{BGS}. 

In \autoref{sect:McKay}, we explain the generalities of our approach using the Flag variety as described above. This is then used in \autoref{sect:dualvanish} to prove \autoref{thm:maindualvanish}. 

For our proof of \autoref{thm:mainExt} and \autoref{thm:mainfunctor}, we need some techniques concerning Fourier--Mukai functors and equivariant sheaves. These are recalled in \autoref{subsect:FM} and \autoref{subsect:equi}, respectively. After introducing some incidence schemes in \autoref{subsect:Xi} that will be needed, we do the main parts of the proofs of \autoref{thm:mainExt} and \autoref{thm:mainfunctor} in \autoref{subsect:Ext} using Fourier--Mukai computations.

In \autoref{sect:furtherrem}, we give two further observations. The first is that we can generalise the formulas \eqref{eq:cohformula}, \eqref{eq:cohdualformula}, and \eqref{eq:Extformula} a bit further by adding natural line bundles on the Quot scheme. The second is that \autoref{thm:mainfunctor} gives a part of a semi-orthogonal decomposition in the case that $C=\IP^1$ is the projective line.

\subsection*{Notation and conventions}

We will work over the complex numbers $\IC$ as our ground field.

Given a smooth projective variety $X$, its bounded derived category of coherent sheaves is denoted by $\D(X):=\D^b(\Coh(X))$. In \autoref{sect:basics}, \ref{sect:coh}, \ref{sect:McKay}, \ref{sect:dualvanish} we will make it explicit in our notation whenever we are using derived functors, writing, for example, the derived push-forward along the Quot--Chow morphism by $R\mu_{d*}$, as we did in the introduction. However, once we start doing heavy Fourier--Mukai calculations in \autoref{sect:FM}, in order to shorten the formulas a bit, we will tacitly assume that every functor is derived. For example, the derived push-forward will then be denoted by $\mu_{d*}\colon \D(\Quot_d(E))\to \D(C^{(d)})$; see \autoref{conv:derived}.    

We will often consider projection morphisms from products of varieties to some of their factors. These will always be denoted by $\pr$ with the numbering of the factors to which we project in the lower index. For example, we will always write
\[
 \pr_{12}\colon X\times Y\times Z\to X\times Y\quad,\quad (x,y,z)\mapsto (x,y)\,,
\]
independently of which factors $X$, $Y$, $Z$ we actually have. The advantage is that we do not have to introduce a new letter whenever yet another projection morphism shows up. The disadvantage is that, for example, $\pr_{2}$ will stand for several different morphisms; see e.g.\ diagram \eqref{eq:simplediag} below. We found it unlikely that this ambiguity will cause serious problems for the reader. 

If the numbers in the lower index of $\pr$ are not in the standard order, this denotes the projection to the factors, followed by the morphism interchanging the order of the factors. For example, $\pr_{32}\colon X\times Y\times Z\to Z\times Y$ denotes the morphism given by $\pr_{32}(x,y,z)=(z,y)$.

\section{Summary of Basics on Quot Schemes and Tautological Bundles}\label{sect:basics}

\subsection{Symmetric powers of curves and their tautological bundles}\label{subsect:SdC}

Throughout the text, $C$ will denote a smooth projective curve. For any $d\in \IN$, the \emph{$d$-th symmetric power of $C$} is the quotient $C^{(d)}:=C^d/\sym_d$ of the cartesian power $C^d$ by the symmetric group $\sym_d$.
We denote the quotient morphism by $\pi=\pi_d\colon C^d\to C^{(d)}$, and record the following well-known fact.
\begin{lemma}\label{lem:piflat}
 For every $d\in \IN$, the quotient morphism $\pi_d\colon C^d\to C^{(d)}$ is flat and $C^{(d)}$ is smooth. 
\end{lemma}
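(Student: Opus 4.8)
## Proof Proposal for Lemma \ref{lem:piflat}

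The plan is to reduce everything to a local computation using the étale-local structure of symmetric products of smooth curves. First I would recall the classical fact (going back to essentially the theory of elementary symmetric functions) that the symmetric power $\IA^{1(d)} = \Spec \IC[x_1,\dots,x_d]^{\sym_d}$ is itself isomorphic to affine space $\IA^d$, via the elementary symmetric polynomials $e_1,\dots,e_d$; more generally, $\IA^{n(d)}$ for the symmetric product of $\IA^1$ sits inside this picture and $C^{(d)}$ is built from such charts. Since smoothness and flatness are both étale-local (indeed Zariski-local) on the target, it suffices to treat the case $C = \IA^1$, or rather to produce an étale cover of $C^{(d)}$ by opens that look like products of such pieces.

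Concretely, the key step is the following standard decomposition. A point of $C^{(d)}$ is an effective divisor $D = \sum_{j} m_j p_j$ with distinct points $p_j \in C$ and $\sum_j m_j = d$. Choosing disjoint analytic (or étale) neighbourhoods $U_j \ni p_j$, each isomorphic to a neighbourhood of $0$ in $\IA^1$, one gets an étale neighbourhood of $D$ in $C^{(d)}$ isomorphic to $\prod_j U_j^{(m_j)}$, and correspondingly an étale neighbourhood of $\pi^{-1}(D)$-type points in $C^d$ that is $\sym_d$-equivariantly a disjoint union of products $\prod_j U_j^{m_j}$ induced up from $\prod_j \sym_{m_j}$. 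Under these identifications the morphism $\pi_d$ becomes a product of the standard quotient maps $\IA^{m_j} \to \IA^{1(m_j)} \cong \IA^{m_j}$. So the whole statement follows once we know: (a) $\IA^{1(m)} \cong \IA^m$ (hence $C^{(d)}$ is smooth, being étale-locally $\prod_j \IA^{m_j} \cong \IA^d$), and (b) the quotient map $q_m \colon \IA^m \to \IA^{1(m)} \cong \IA^m$ is flat.

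For (b), the cleanest route is the miracle flatness criterion (``critère de platitude par fibres'' / Hironaka's lemma): a morphism from a Cohen--Macaulay scheme to a regular scheme with all fibres of the same dimension is automatically flat. Here the source $\IA^m$ is regular hence Cohen--Macaulay, the target $\IA^{1(m)} \cong \IA^m$ is regular, and $q_m$ is finite, so all fibres are zero-dimensional; flatness follows immediately. Alternatively, one can observe $\IC[x_1,\dots,x_m]$ is a free module over $\IC[e_1,\dots,e_m]$ of rank $m!$, with the Schubert/descending monomials $\{x_1^{a_1}\cdots x_m^{a_m} : 0 \le a_i \le m-i\}$ as an explicit basis — but invoking miracle flatness avoids this computation.

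I expect the main obstacle to be purely expository rather than mathematical: making the étale-local product decomposition of $(C^{(d)}, \pi_d)$ precise, in particular checking that forming the symmetric product genuinely commutes with passing to the disjoint-neighbourhood charts and that the induced map on charts is the product of the $q_{m_j}$. Once that bookkeeping is in place, smoothness of $C^{(d)}$ and flatness of $\pi_d$ are both immediate from the affine-space model together with miracle flatness. (One could also cite Fogarty or the standard references on symmetric products, e.g. the fact that $\Hilb^d(C) \cong C^{(d)}$ for a smooth curve with $\Hilb^d$ smooth, but the self-contained local argument above seems preferable here.)
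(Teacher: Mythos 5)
Your argument is correct, and it is essentially an expanded version of the paper's own proof, which consists of the single remark that the statement follows from the Fundamental Theorem on Symmetric Functions or from the Chevalley--Shephard--Todd Theorem: your \'etale-local charts $\prod_j U_j^{(m_j)}$ plus the identification $\IA^{1(m)}\cong\IA^m$ is exactly the symmetric-functions route spelled out. The only cosmetic difference is in how flatness is extracted -- you use miracle flatness (which, once smoothness of $C^{(d)}$ is known, applies directly to the finite morphism $\pi_d\colon C^d\to C^{(d)}$ without any further chart bookkeeping), whereas the Chevalley--Shephard--Todd route cited in the paper yields it via the freeness of the polynomial ring over its reflection-group invariants.
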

\begin{proof}
 This can be deduced from the Fundamental Theorem on Symmetric Functions, or from the Chevalley--Shephard--Todd Theorem.
\end{proof}

We write the closed points of $C^{(d)}$ as formal sums $x_1+\dots+x_d:=\pi(x_1,\dots,x_d)$.
Indeed, the symmetric power can be regarded as the moduli space of effective divisors of degree $d$ (or as the Hilbert scheme of $d$ points) with the universal family $\Xi=\Xi_d\subset C\times C^{(n)}$ being the image of the closed embedding
\begin{equation}\label{eq:Xiembedding}
C\times C^{(d-1)}\hookrightarrow C\times C^{(d)}\quad,\quad (x,x_1+\dots+x_{d-1})\mapsto (x, x_1+\dots+x_{d-1}+x)\,.  
\end{equation}
Let $a\colon \Xi\to C$ and $b\colon \Xi\to C^{(d)}$ be the restriction of the projections $\pr_1\colon C\times C^{(d)}\to C$ and $\pr_2\colon C\times C^{(d)}\to C^{(d)}$, respectively. Since $b$ is flat and finite of degree $d$, for any vector bundle $F\in \VB(C)$, we get an induced \emph{tautological bundle} 
\[
 F^{[d]}:=b_*a^*F\in \VB(C^{(d)})
\]
with $\rk F^{[d]}=d\cdot\rk F$.

\begin{prop}\label{prop:cohtautSdC}
For every $d\in \IN$, we have 
\[
\Ho^*\bigl(C^{(d)}, F^{[d]}\bigr)\cong \Ho^*(F)\otimes S^{d-1}\Ho^*(\reg_X)\,.
\]
\end{prop}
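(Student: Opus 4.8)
The plan is to reduce the computation on $C^{(d)}$ to a computation on the cartesian power $C^d$ by pulling back along the quotient morphism $\pi_d$, and there to identify the pullback of the tautological bundle as a direct sum indexed by the factors, whose cohomology is then a Künneth computation. Since $\pi_d$ is finite flat of degree $d!$ and $C^{(d)}$ is smooth (\autoref{lem:piflat}), the pushforward $\pi_{d*}\reg_{C^d}$ contains $\reg_{C^{(d)}}$ as a direct summand (the $\sym_d$-invariants, split off by the trace/averaging idempotent since we are in characteristic zero), so $\Ho^*(C^{(d)},\F)\cong \Ho^*(C^d,\pi_d^*\F)^{\sym_d}$ for any coherent sheaf $\F$ on $C^{(d)}$; in particular this applies to $\F=F^{[d]}$.

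The key step is to compute $\pi_d^*F^{[d]}$ as a $\sym_d$-equivariant bundle. Consider the fibre diagram obtained by base change of $b\colon \Xi_d\to C^{(d)}$ along $\pi_d$; because $b$ is finite flat of degree $d$ and pulls back to the universal family over $C^d$, one gets $\pi_d^*(b_*a^*F)\cong b'_*a'^*F$ where $b'$ is the pulled-back degree-$d$ cover of $C^d$. Over $C^d$ this cover splits: the reduced preimage of $\Xi_d$ decomposes (up to the non-reduced locus, which does not affect the pushforward of the locally free sheaf) into $d$ copies of $C^d$, the $j$-th one being the graph of the $j$-th projection $C^d\to C$. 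Concretely, one shows
\[
\pi_d^*F^{[d]}\;\cong\;\bigoplus_{j=1}^d \pr_j^*F
\]
with $\sym_d$ permuting the summands (and acting on $C^d$ by permuting factors), compatibly with the permutation action on the index set $\{1,\dots,d\}$. This identity is standard for symmetric powers of curves; I would prove it either by the base-change argument just sketched or, perhaps more cleanly, by exhibiting a natural map $\bigoplus_j \pr_j^*F\to \pi_d^*F^{[d]}$ coming from the $d$ sections of the universal cover and checking it is an isomorphism fibrewise (both sides are vector bundles of rank $d\cdot\rk F$).

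Granting this, the proof concludes by a Künneth plus invariants computation. We have
\[
\Ho^*(C^d,\pi_d^*F^{[d]})\;\cong\;\bigoplus_{j=1}^d \Ho^*\!\Big(C^d,\pr_j^*F\Big)\;\cong\;\bigoplus_{j=1}^d \Ho^*(F)\otimes \Ho^*(\reg_C)^{\otimes(d-1)},
\]
and as a $\sym_d$-representation this is the induced representation $\mathrm{Ind}_{\sym_{d-1}}^{\sym_d}\big(\Ho^*(F)\otimes \Ho^*(\reg_C)^{\otimes(d-1)}\big)$, where $\sym_{d-1}$ fixes the distinguished factor carrying $\Ho^*(F)$ and permutes the remaining $d-1$ copies of $\Ho^*(\reg_C)$. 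Taking $\sym_d$-invariants and using Frobenius reciprocity (together with the sign-bookkeeping for the graded symmetric powers, i.e.\ the Koszul rule, which is why $S^{d-1}$ here means the graded symmetric power) yields
\[
\Ho^*(C^{(d)},F^{[d]})\;\cong\;\Ho^*(F)\otimes \big(\Ho^*(\reg_C)^{\otimes(d-1)}\big)^{\sym_{d-1}}\;\cong\;\Ho^*(F)\otimes S^{d-1}\Ho^*(\reg_C),
\]
which is the claim (note $\reg_X$ in the statement is a typo for $\reg_C$). The main obstacle is the careful justification of the equivariant splitting $\pi_d^*F^{[d]}\cong\bigoplus_{j=1}^d\pr_j^*F$ — in particular making sure the non-reduced locus of the pulled-back universal divisor is harmless for the pushforward, and tracking the $\sym_d$-equivariant structure precisely enough to run Frobenius reciprocity; the rest is formal. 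One could alternatively cite \cite[(9.1)]{Macdonald--sym} or the references in the introduction, but the argument above is short enough to include directly.
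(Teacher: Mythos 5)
The key step of your argument fails: the claimed equivariant splitting $\pi_d^*F^{[d]}\cong\bigoplus_{j=1}^d\pr_j^*F$ is false. The pulled-back universal family $(\id\times\pi_d)^{-1}\Xi_d\subset C\times C^d$ is the (reduced) \emph{union} of the $d$ graphs of the projections, glued along the pairwise diagonals, not their disjoint union; so the issue is not a non-reduced locus but the gluing. Consequently $\pi_d^*F^{[d]}\cong b'_*a'^*F$ is only a subsheaf of $\bigoplus_j\pr_j^*F$ (via the evaluation maps given by the $d$ sections), with cokernel supported on the big diagonal. Already for $d=2$, $F=\reg_C$: the Mayer--Vietoris sequence $0\to b'_*\reg_{D_2}\to\reg_{C^2}^{\oplus2}\to\reg_\Delta\to0$ gives $\det\bigl(\pi_2^*\reg_C^{[2]}\bigr)\cong\reg_{C^2}(-\Delta)\not\cong\reg_{C^2}=\det\bigl(\pr_1^*\reg_C\oplus\pr_2^*\reg_C\bigr)$, and $\ho^0\bigl(\pi_2^*\reg_C^{[2]}\bigr)=1\neq 2$. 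Your fallback argument also breaks down: the natural maps run from $\pi_d^*F^{[d]}$ \emph{to} $\pr_j^*F$, and the resulting map $\pi_d^*F^{[d]}\to\bigoplus_j\pr_j^*F$ is not a fibrewise isomorphism along the diagonals (over $2x_1+x_3+\cdots$ the two evaluations at $x_1$ coincide). The correct statement is only a filtration of $\pi_d^*F^{[d]}$ with graded pieces $\pr_j^*F$ twisted by $-\sum_{i<j}\Delta_{ij}$, and extracting the $\sym_d$-invariant cohomology from that filtration is exactly the nontrivial work (this is why the $\Ext$-computations on $C^{(d)}$ in \cite{Krug--curveExt} require a spectral sequence rather than a closed formula). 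Your Frobenius-reciprocity endgame computes the invariants of the wrong object; that it happens to give the right answer is not a proof.

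The paper avoids all of this: since $b\colon\Xi_d\to C^{(d)}$ is finite, $\Ho^*\bigl(C^{(d)},F^{[d]}\bigr)\cong\Ho^*(\Xi_d,a^*F)$, and under the isomorphism $\Xi_d\cong C\times C^{(d-1)}$ of \eqref{eq:Xiembedding} one has $a^*F\cong F\boxtimes\reg_{C^{(d-1)}}$, so K\"unneth gives $\Ho^*(F)\otimes S^{d-1}\Ho^*(\reg_C)$ directly. If you want to salvage your route via $C^d$, you must either work with the filtration above and show the invariant parts of the extension terms do not interfere, or pull back along $C\times C^{(d-1)}\to C\times C^{(d)}$ rather than along $\pi_d$ — which is in effect the paper's proof.
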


\begin{proof}
This is easy and most likely well-known. For the global sections in the case that $F$ is a line bundle, it was already stated in \cite[Cor.\ to Prop.\ 1]{Mattuck--sym}. However, we did not find a reference for the general statement, so here is the proof: As $b$ is finite, we have
\[
 \Ho^*(C^{(d)}, F^{[d]})\cong \Ho^*(C^{(d)}, b_*a^*F)\cong \Ho^*(\Xi_d, a^*F)\,. 
\]
Under the isomorphism $C\times C^{(d-1)}\cong \Xi_d$ described in \eqref{eq:Xiembedding}, $a^*F$ corresponds to the box product $F\boxtimes \reg_{C^{(d-1)}}$. Hence, by the K\"unneth isomorphism
\begin{align*}
\Ho^*(\Xi_d, a^*F)\cong \Ho^*(F)\otimes \Ho^*(\reg_{C^{(d-1)}})&\cong \Ho^*(F)\otimes \Bigl(\Ho^*(\reg_C)^{\otimes d-1}\Bigr)^{\sym_{d-1}}\\&\cong \Ho^*(F)\otimes S^{d-1}\Ho^*(\reg_X)\,. \qedhere
\end{align*}
\end{proof}

\subsection{Punctual Quot schemes and their tautological bundles}\label{subsect:Quottaut}


Given some vector bundle $E\in \VB(C)$, a natural generalisation of the symmetric power of the curve is given by the \emph{punctual Quot scheme} $\Quot_d(E)$, which is the moduli space of zero-dimensional length $d$ quotients $E\twoheadrightarrow Q$.
There is the \emph{Quot--Chow morphism} which sends a length $d$ quotient to its weighted support
\[
 \mu=\mu_d\colon \Quot_d(E)\to C^{(d)}\quad,\quad Q\mapsto \sum_{x\in \supp Q} \ell(Q_x)\,;
\]
see \cite[Sect.\ 2.1 \& 2.2]{BFP} for details on the construction of this morphism.

Let us collect some important basic facts on the geometry of the punctual Quot scheme:

\begin{prop}\label{prop:Quotbasicinfo}
For every smooth projective curve $C$, every $E\in \VB(C)$, and every $d\in \IN$,
\begin{enumerate}
 \item $\Quot_d(E)$ is smooth of dimension $\dim \Quot_d(E)=d\cdot\rk E$,
 \item $\mu\colon \Quot_d(E)\to C^{(d)}$ is flat.
\end{enumerate}
\end{prop}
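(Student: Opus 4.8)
The statement to prove is \autoref{prop:Quotbasicinfo}: smoothness and dimension of $\Quot_d(E)$, plus flatness of the Quot--Chow morphism $\mu$. Let me sketch a proof plan.

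\medskip

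The plan is to prove (i) and (ii) separately, with (i) being essentially a deformation-theoretic computation and (ii) following from (i) via a general criterion.

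For part (i), the standard approach is deformation theory of the Quot scheme. At a point $[q\colon E\twoheadrightarrow Q]$ with $K=\ker q$, the tangent space is $\Hom_C(K,Q)$ and the obstruction space is $\Ext^1_C(K,Q)$. Since $C$ is a smooth curve, $K$ is a vector bundle (a subsheaf of a locally free sheaf on a smooth curve, hence torsion-free, hence locally free), so $\Ext^1_C(K,Q)\cong \Ho^1(C, K^\vee\otimes Q)=0$ because $Q$ is a zero-dimensional (hence supported in dimension $0$, so higher cohomology vanishing) sheaf — more precisely $K^\vee \otimes Q$ has zero-dimensional support so $\Ho^{>0}$ vanishes. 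Therefore $\Quot_d(E)$ is smooth, and its dimension equals $\dim \Hom_C(K,Q) = \chi(C, K^\vee\otimes Q) = \ell(Q)\cdot \rk E = d\cdot \rk E$, where I use that $K^\vee \otimes Q$ is a length-$(d\cdot\rk E)$ sheaf (tensoring a length-$d$ sheaf by a rank-$\rk E$ bundle, locally) and $\chi$ of a zero-dimensional sheaf is just its length. Alternatively, one can simply cite \cite[Lem.\ 2.2]{BFP} or \cite{Monavari-Ricolfi--lisse} as the introduction already does; I would give the short deformation-theory argument for completeness and cite these as well.

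For part (ii), I would use the miracle flatness criterion ("going-down"/\emph{critère de platitude par fibres}): a morphism $f\colon X\to Y$ of finite type with $X$ Cohen--Macaulay (here even smooth), $Y$ regular, such that every nonempty fibre has the expected dimension $\dim X - \dim Y$, is flat. We have $X=\Quot_d(E)$ smooth of dimension $d\cdot\rk E$ by (i), and $Y=C^{(d)}$ smooth of dimension $d$ by \autoref{lem:piflat}. So it remains to check that every fibre of $\mu$ has dimension exactly $d\cdot\rk E - d = d(\rk E - 1)$. The fibre over a point $x_1+\dots+x_d$ decomposes (by the local structure of zero-dimensional quotients) as a product of "local punctual Quot schemes" at the distinct points appearing in the divisor; for a point supported entirely at one $x\in C$ with multiplicity $d$, the fibre is $\Quot_d(\reg_C^{\oplus \rk E})_0$, the punctual Quot scheme of length-$d$ quotients of $\reg^{\oplus\rk E}_{C,x}$ supported at $x$, which is known to be irreducible of dimension $d(\rk E -1)$ — this is exactly the content of the fibre analysis in \cite{BGS} (and classically e.g.\ in work on punctual Quot schemes). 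For a general divisor, the fibre dimension is the sum over the distinct points, which again totals $d(\rk E - 1)$. I would cite \cite{BGS} for this fibre-dimension statement rather than reprove it, since the introduction explicitly refers to their description of the fibres of $\mu$.

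\medskip

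The main obstacle is the fibre-dimension estimate in (ii): one genuinely needs to know that the punctual Quot scheme $\Quot_d(\reg^{\oplus r}_{C,x})_0$ has dimension exactly $d(r-1)$ (equivalently, that no component of a fibre of $\mu$ is too large), and this is the one point that is not pure formalism. Everything else — the deformation theory in (i) and the application of miracle flatness in (ii) — is routine once that input is in hand. Since \autoref{lem:piflat} and the cited references \cite{BFP, BGS, Monavari-Ricolfi--lisse} supply exactly these ingredients, the cleanest write-up simply assembles them; I would not attempt to reprove the punctual fibre dimension from scratch.
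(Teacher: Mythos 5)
Your proposal is correct, but it does more work than the paper, whose entire proof consists of citations: part (i) is referred to \cite[Lem.\ 2.2]{BFP} or \cite{Monavari-Ricolfi--lisse}, and part (ii) is quoted directly as \cite[Cor.\ 6.3]{Gango-Sebastian}. What you write is essentially a reconstruction of the arguments behind those citations. For (i), your deformation-theoretic argument is the standard one and is sound: on a smooth curve the kernel $K$ is automatically locally free, so the obstruction space $\Ext^1_C(K,Q)\cong\Ho^1(C,K^\vee\otimes Q)$ vanishes because $K^\vee\otimes Q$ has zero-dimensional support, and the tangent space $\Hom_C(K,Q)$ has dimension $\ell(K^\vee\otimes Q)=d\cdot\rk E$. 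For (ii), your route via miracle flatness (smooth total space, smooth base $C^{(d)}$, all fibres of dimension $d(\rk E-1)$) is a legitimate alternative to quoting \cite{Gango-Sebastian}, and you correctly isolate the one genuinely non-formal ingredient, namely the fibre-dimension statement for the punctual fibres $\Quot_D$; this is exactly the content supplied by the analysis of the birational models $S_{\vec x}\to\Quot_D$ in \cite{Gango-Sebastian, BGS}, which the paper later recalls in its \autoref{sect:McKay} (indeed $S_{\vec x}$ is an iterated $\IP^{\rk E-1}$-bundle, giving $\dim\Quot_D=d(\rk E-1)$). So the trade-off is: the paper buys brevity by outsourcing both statements, while your write-up makes the smoothness self-contained and reduces flatness to a clearly identified fibre-dimension input that you would still cite rather than reprove; both are acceptable, and there is no gap in your plan.
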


\begin{proof}
For (i), see \cite[Lem.\ 2.2]{BFP} or \cite{Monavari-Ricolfi--lisse}. Part (ii) is  \cite[Cor.\ 6.3]{Gango-Sebastian}. 
\end{proof}

From now on, the curve $C$ and the vector bundle $E\in \VB(C)$ with $\rk E\ge 2$ will be fixed, and we abbreviate
\[
 \Quot_d:=\Quot_d(E)\,.
\]
We denote the universal quotient sequence on $C\times \Quot_d$ by 
\[
 0\to \cK_d\to E\boxtimes \reg_{\Quot_d}\to \cQ_d\to 0\,.
\]
As $\cQ$ is flat and finite of degree $d$ over $\Quot_d$, to any vector bundle $F\in \VB(C)$, we can again associated a \emph{tautological bundle} with $\rk F^{\llb d\rrb}=d\cdot \rk F$ by 
\[
 F^{\llb d \rrb}:=\pr_{2*}(\pr_1^*F\otimes \cQ_d)\in \VB(\Quot_d)\,.
\]

\section{Cohomology of tautological bundles}\label{sect:coh}  

In \cite{BGS}, the cohomology of the tangent bundle of $\Quot_d$ was computed. The tangent bundle is not a tautological bundle. Still, one of the auxiliary results in \emph{loc.\ cit}.\ leads to a formula or the cohomology of tautological bundles, as we explain in this short section.

\begin{prop}[{\cite[Cor.\ 9.3]{BGS}}]\label{prop:BGS}
We have $R(\id_C\times \mu)_*\cQ_d\cong \bigl(E\boxtimes\reg_{C^{(d)}}\bigr)_{\mid \Xi_d}$.
\end{prop}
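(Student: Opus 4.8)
\emph{Plan.} The plan is to pull everything back to the incidence divisor
$W_d:=(\id_C\times\mu)^{-1}(\Xi_d)=\Quot_d\times_{C^{(d)}}\Xi_d\subset C\times\Quot_d$.
Since $\Xi_d$ is an effective Cartier divisor in the smooth variety $C\times C^{(d)}$ and $\mu$ is flat (\autoref{prop:Quotbasicinfo}), $W_d$ is an effective Cartier divisor in the smooth variety $C\times\Quot_d$. Write $\phi:=\id_C\times\mu$, let $\phi'\colon W_d\to\Xi_d$ be the projection (the flat base change of $\mu$ along $b\colon\Xi_d\to C^{(d)}$), and recall that the target sheaf $(E\boxtimes\reg_{C^{(d)}})_{\mid\Xi_d}$ is just $a^*E$ regarded as a sheaf on $\Xi_d$, where $a\colon\Xi_d\to C$. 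I would then run a projection-formula computation for $R\phi_*$ after replacing $\cQ_d$ by a two-step complex on $W_d$.

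First I would check that $\cQ_d$ is scheme-theoretically supported on $W_d$. Fibrewise this is clear: for a point $[E\twoheadrightarrow Q]$ of $\Quot_d$ and $x\in\supp Q$, the stalk $Q_x$ is a length-$\ell(Q_x)$ module over the discrete valuation ring $\reg_{C,x}$, hence is killed by $\fm_x^{\ell(Q_x)}$, so the ideal of the divisor $\mu([Q])=\sum_x\ell(Q_x)\,x$ annihilates $Q$. To globalise, I would apply this at a generic point (working locally on $\Quot_d$, which is smooth, hence locally integral) and use that $\cQ_d$ is torsion-free over $\reg_{\Quot_d}$, being $\Quot_d$-flat: a local equation of $W_d$ then annihilates $\cQ_d$. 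Multiplying the universal surjection $E\boxtimes\reg_{\Quot_d}\twoheadrightarrow\cQ_d$ by such a local equation gives $(E\boxtimes\reg_{\Quot_d})(-W_d)\subseteq\cK_d$, hence a short exact sequence of sheaves on $W_d$
\[
 0\to\cK'\to(E\boxtimes\reg_{\Quot_d})_{\mid W_d}\to\cQ_d\to 0,\qquad \cK':=\cK_d\big/(E\boxtimes\reg_{\Quot_d})(-W_d).
\]

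The middle term is easy to push forward: $(E\boxtimes\reg_{\Quot_d})_{\mid W_d}\cong(\phi')^*(a^*E)$, and the fibres of $\mu$ are products $\prod_{x}\Quot_{n_x}(E)_x$ of punctual Quot schemes (product over the points $x$ of the relevant divisor), which are irreducible with rational singularities (cf.\ \cite{Gango-Sebastian, BGS}); hence $R\mu_*\reg_{\Quot_d}\cong\reg_{C^{(d)}}$, so $R\phi'_*\reg_{W_d}\cong\reg_{\Xi_d}$ by flat base change along $b$, and the projection formula then gives $R\phi_*\bigl((E\boxtimes\reg_{\Quot_d})_{\mid W_d}\bigr)\cong(E\boxtimes\reg_{C^{(d)}})_{\mid\Xi_d}$. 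Applying $R\phi_*$ to the sequence above, the Proposition becomes equivalent to the vanishing $R\phi'_*\cK'\cong 0$. Since $\phi'$ is flat, cohomology-and-base-change reduces this to showing that for each $(x,D)\in\Xi_d$ the derived restriction of $\cK'$ to the fibre $\{x\}\times\mu^{-1}(D)$ has vanishing hypercohomology; via $\mu^{-1}(D)=\prod_x\Quot_{n_x}(E)_x$, the K\"unneth formula, and $R\Gamma(\Quot_n(E)_x,\reg)\cong\IC$, this reduces to the corresponding statement for a single punctual Quot scheme: the tautological kernel of $\reg^{\oplus r}\twoheadrightarrow(\text{universal quotient})$ on $\Quot_n(\reg^{\oplus r})$ supported at a single point ($r=\rk E$), restricted to the supporting point, has trivial cohomology. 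For $n=1$ this is the relative Euler sequence on $\IP(E)=\Quot_1(E)$.

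The main obstacle is precisely this final reduction. The vanishing on punctual Quot schemes, together with the rational-singularities input feeding into $R\mu_*\reg_{\Quot_d}\cong\reg_{C^{(d)}}$, is where the genuine geometry of the fibres of the Quot--Chow morphism enters; this is the analysis carried out in \cite{BGS}, and indeed the statement above is their Corollary~9.3. In that sense the first two steps above only repackage their result into the clean form used in the sequel, while the third step is their theorem.
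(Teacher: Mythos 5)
The paper does not prove this statement at all: it is quoted verbatim as \cite[Cor.\ 9.3]{BGS}, so there is no internal argument to compare against, and you correctly recognise this at the end of your write-up. Your outline is consistent with how the cited work actually proceeds: the scheme-theoretic support of $\cQ_d$ on $W_d=(\id\times\mu)^{-1}(\Xi_d)$ (via the annihilation of a length-$n$ module over a DVR by $\fm^n$, globalised by flatness of $\cQ_d$ over the reduced base $\Quot_d$), the resulting sequence $0\to\cK'\to(E\boxtimes\reg)_{\mid W_d}\to\cQ_d\to 0$, and the reduction of the middle term via $R\mu_*\reg_{\Quot_d}\cong\reg_{C^{(d)}}$ (itself \cite[Cor.\ 9.1]{BGS}) and the projection formula are all sound. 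The one step you should not present as routine is the final fibrewise vanishing $R\phi'_*\cK'\cong 0$: the passage from ``derived restriction of $\cK'$ to each fibre $\{x\}\times\mu^{-1}(D)$'' to a statement about a single punctual Quot scheme requires knowing how $\cK'$ itself decomposes under $\mu^{-1}(D)\cong\prod_x\Quot_{n_x}$, not just the structure sheaf, and the cohomology vanishing on the punctual factors is exactly what \cite{BGS} establish through the iterated projective bundles $S_{\vec x}\to\Quot_D$. Since you flag this explicitly as the deferred content, the proposal is an honest repackaging of the cited result rather than an independent proof, which matches the paper's own treatment.
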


\autoref{thm:maincoh} follows easily from this.

\begin{theorem}\label{thm:tautmupush}
We have $R\mu_*F^{\llb d\rrb}\cong (E\otimes F)^{[d]}$. 
\end{theorem}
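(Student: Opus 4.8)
The plan is to deduce the statement from \autoref{prop:BGS} via the projection formula and flat base change. First I would set up the fibre square
\[
\begin{CD}
C\times \Quot_d @>{\id_C\times \mu}>> C\times C^{(d)}\\
@V{\pr_2}VV @VV{\pr_2}V\\
\Quot_d @>{\mu}>> C^{(d)}
\end{CD}
\]
so that $\mu\circ \pr_2 = \pr_2\circ(\id_C\times\mu)$ as morphisms $C\times\Quot_d\to C^{(d)}$. Recalling that $F^{\llb d\rrb}=\pr_{2*}(\pr_1^*F\otimes \cQ_d)$ and that $\pr_2$ (on $C\times\Quot_d$) is flat and finite, so already derived, I would write
\[
R\mu_*F^{\llb d\rrb}\cong R\mu_*R\pr_{2*}(\pr_1^*F\otimes \cQ_d)\cong R\pr_{2*}R(\id_C\times\mu)_*(\pr_1^*F\otimes \cQ_d)\,.
\]

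The key step is then to move $\pr_1^*F$ outside the push-forward $R(\id_C\times\mu)_*$. Since $\pr_1^*F = (\id_C\times\mu)^*(\pr_1^*F)$ (the map $\id_C\times\mu$ is the identity on the first factor, so the two ``$\pr_1$''s agree), the projection formula gives
\[
R(\id_C\times\mu)_*\bigl(\pr_1^*F\otimes \cQ_d\bigr)\cong \pr_1^*F\otimes R(\id_C\times\mu)_*\cQ_d\cong \pr_1^*F\otimes \bigl(E\boxtimes\reg_{C^{(d)}}\bigr)_{\mid\Xi_d}\,,
\]
using \autoref{prop:BGS} for the last isomorphism. Now $\pr_1^*F\otimes (E\boxtimes\reg_{C^{(d)}})_{\mid\Xi_d}$ is the restriction to $\Xi_d$ of $\pr_1^*(F\otimes E)=(F\otimes E)\boxtimes\reg_{C^{(d)}}$, which under the identification $\Xi_d\cong C\times C^{(d-1)}$ of \eqref{eq:Xiembedding} corresponds to $(E\otimes F)\boxtimes \reg_{C^{(d-1)}}$, i.e.\ to $a^*(E\otimes F)$ in the notation of \autoref{subsect:SdC}. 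Pushing forward along $\pr_2$, which restricted to $\Xi_d$ is the finite flat map $b\colon \Xi_d\to C^{(d)}$, therefore gives $b_*a^*(E\otimes F)=(E\otimes F)^{[d]}$ by definition of the tautological bundle on the symmetric product. This yields $R\mu_*F^{\llb d\rrb}\cong (E\otimes F)^{[d]}$, and since $(E\otimes F)^{[d]}$ is a sheaf (indeed a vector bundle) concentrated in degree $0$, the higher direct images vanish.

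I do not expect any serious obstacle here; the only points requiring a little care are the compatibility of the two ``$\pr_1$'' morphisms so that the projection formula applies with the correct pullback, the identification of $\bigl(E\boxtimes\reg_{C^{(d)}}\bigr)_{\mid\Xi_d}$ tensored with $\pr_1^*F$ as $a^*(E\otimes F)$ under \eqref{eq:Xiembedding}, and the observation that the restriction of $\pr_2$ to $\Xi_d$ is exactly the map $b$ defining $(-)^{[d]}$. All of these are immediate from the definitions recalled in \autoref{subsect:SdC} and \autoref{subsect:Quottaut}. The statement $\Ho^*(\Quot_d,F^{\llb d\rrb})\cong\Ho^*(E\otimes F)\otimes S^{d-1}\Ho^*(\reg_C)$ then follows by taking cohomology and applying \eqref{eq:tautSnCH} (i.e.\ \autoref{prop:cohtautSdC}) to the vector bundle $E\otimes F$ on $C$.
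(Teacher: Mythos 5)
Your argument is correct and is essentially the paper's own proof: the same commutative square, the projection formula along $\id_C\times\mu$ applied to $R(\id_C\times\mu)_*(\pr_1^*F\otimes\cQ_d)$, then \autoref{prop:BGS}, and finally the identification of $\pr_{2*}\bigl(\pr_1^*(E\otimes F)\otimes\reg_{\Xi_d}\bigr)$ with $b_*a^*(E\otimes F)=(E\otimes F)^{[d]}$. One small imprecision: $\pr_2\colon C\times\Quot_d\to\Quot_d$ is flat but not finite; the reason $\pr_{2*}(\pr_1^*F\otimes\cQ_d)$ is already derived is that the sheaf $\cQ_d$ (hence $\pr_1^*F\otimes\cQ_d$) is finite over $\Quot_d$.
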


\begin{proof}
Considering the commutative diagram
\begin{equation}\label{eq:simplediag}
 \xymatrix{
&\ar_{\pr_1}[dl] C\times \Quot_d \ar^{\pr_2}[r] \ar^{\id\times \mu}[d] &  \Quot_d \ar^{\mu}[d] \\
C  & \ar^{\pr_1}[l]  C\times C^{(d)} \ar^{\pr_2}[r] & C^{(d)}\,,
 }
\end{equation}
we get
\begin{align*}
R\mu_*F^{\llb d\rrb}
\cong &R\mu_*\pr_{2*}(\pr_1^* F\otimes \cQ)\\
\cong & \pr_{2*}R(\id\times \mu)_*(\pr_1^* F\otimes \cQ)\\
\cong & \pr_{2*}( \pr_1^* F\otimes R(\id\times \mu)_*\cQ) \tag{projection formula along $\id\times \mu$}\\
\cong & \pr_{2*}( \pr_1^* (E\otimes F)\otimes \reg_{\Xi_d}) \tag{\autoref{prop:BGS}}\\
\cong &  (E\otimes F)^{[d]}\,.\qedhere
\end{align*}
\end{proof}

\begin{cor}\label{cor:tautcoh}
 $\Ho^*(\Quot_d, F^{\llb d\rrb})\cong \Ho^*(E\otimes F)\otimes S^{d-1}\Ho^*(\reg_C)$.
\end{cor}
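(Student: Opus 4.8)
The plan is to combine the two results just proven. By \autoref{thm:tautmupush} we have $R\mu_*F^{\llb d\rrb}\cong (E\otimes F)^{[d]}$ in $\D(C^{(d)})$, and in particular the higher direct images vanish, so that $\mu_*F^{\llb d\rrb}\cong (E\otimes F)^{[d]}$ as honest sheaves on $C^{(d)}$. Therefore the Leray spectral sequence for $\mu$ degenerates (or: simply push the derived global sections functor through), giving
\[
 \Ho^*(\Quot_d, F^{\llb d\rrb})\cong \Ho^*\bigl(C^{(d)}, R\mu_*F^{\llb d\rrb}\bigr)\cong \Ho^*\bigl(C^{(d)}, (E\otimes F)^{[d]}\bigr)\,.
\]

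Next I would invoke the cohomology formula for tautological bundles on the symmetric power, \autoref{prop:cohtautSdC} (equivalently \eqref{eq:tautSnCH}), applied to the vector bundle $E\otimes F\in \VB(C)$. This yields
\[
 \Ho^*\bigl(C^{(d)}, (E\otimes F)^{[d]}\bigr)\cong \Ho^*(E\otimes F)\otimes S^{d-1}\Ho^*(\reg_C)\,,
\]
and concatenating the two displayed isomorphisms gives the claim. Since \autoref{prop:cohtautSdC} is stated (and proven) for arbitrary $F\in\VB(C)$, no line-bundle hypothesis is needed on $E\otimes F$.

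Honestly, there is no real obstacle here: the corollary is a one-line formal consequence of \autoref{thm:tautmupush} and \autoref{prop:cohtautSdC}, the only mild point being to note that $R\mu_*$ computes cohomology correctly because $R\glob(C^{(d)},\_)\circ R\mu_*\cong R\glob(\Quot_d,\_)$ as derived functors (composition of derived functors, using that $\mu$ is proper so that pushforward preserves boundedness and coherence). One could phrase the entire proof as the single chain
\[
 \Ho^*(\Quot_d, F^{\llb d\rrb})\cong \Ho^*(C^{(d)}, R\mu_*F^{\llb d\rrb})\cong \Ho^*\bigl(C^{(d)}, (E\otimes F)^{[d]}\bigr)\cong \Ho^*(E\otimes F)\otimes S^{d-1}\Ho^*(\reg_C)\,,
\]
where the middle isomorphism is \autoref{thm:tautmupush} and the last is \autoref{prop:cohtautSdC}.
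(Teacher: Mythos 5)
Your proof is correct and is exactly the paper's argument: the corollary is stated there as an immediate consequence of \autoref{thm:tautmupush} combined with \autoref{prop:cohtautSdC} applied to $E\otimes F$. The extra remarks about Leray/composition of derived functors just make explicit what the paper leaves implicit.
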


\begin{proof}
This is \autoref{thm:tautmupush} together with \autoref{prop:cohtautSdC}.
\end{proof}

\section{Kind of a McKay correspondence}\label{sect:McKay}

For $D=x_1+\dots+x_d\in C^{(d)}$, we write $\Quot_D$ for the fibre of $\mu\colon \Quot_d\to C^{(d)}$ over the point $D$. 
One key to obtain \autoref{prop:BGS} and related results in \cite{BGS} is to study the restriction of the universal sheaves on $C\times \Quot_d$ to the fibres of the Quot--Chow morphism by means of a certain birational morphism $g_{\vec x}\colon S_{\vec x}\to \Quot_D$ (note that in \cite{BGS}, and also in \cite{Gango-Sebastian} where it was studied first, this morphism is denoted by $g_d\colon S_d\to \cQ_D$ instead).
Concretely, given a point $D=x_1+\dots+x_d\in C^{(d)}$, an order $\vec x=(x_1,\dots,x_d)$ of the points of $D$ is chosen. Then $S_{\vec x}$ is constructed as the moduli space of full flags of quotients
\begin{equation}\label{eq:fullflag}
 E\twoheadrightarrow Q_d \twoheadrightarrow Q_{d-1}\twoheadrightarrow \dots \twoheadrightarrow Q_1 \twoheadrightarrow Q_0=0
\end{equation}
where $Q_i$ has length $\ell(Q_i)=i$ and weighted support $\mu_i(Q_i)=x_1+\dots+x_i$. This $S_{\vec x}$ is then an iterated projective bundle, which makes it quite easy to do concrete computations involving vector bundles on it.

In order to understand also the push-forwards of Hom bundles and products of tautological bundles, we globalise this approach. For this, we consider the moduli space $\Flag_d:=\Flag_d(E)$ of \textit{all} full flags of zero-dimensional quotients of $E$, i.e.\ of iterated quotients as in \eqref{eq:fullflag} with $\ell(Q_i)=i$ but \emph{without} any condition on the support of the $Q_i$.

Let us construct $\Flag_d$ inductively, along with a morphism $\nu_d\colon \Flag_d\to C^d$, which will be the morphism that sends a flag \eqref{eq:fullflag} to 
\[(x_1,x_2,\dots,x_d) \in C^d\quad,\quad x_i=\supp\bigl(\ker(Q_i\to Q_{i-1})\bigr)\,.\]
 For $d=0$, we have that $\Flag_0=\Quot_0=C^0$ is a point. For $d=1$, we have the $\IP$-bundle $\Flag_1=\Quot_1=\IP(E)\xrightarrow{\nu_1} C$. The universal length 1 quotient on $C\times \IP(E)$ is given by $E\boxtimes\reg_{\IP(E)}\twoheadrightarrow  i_{1*} \nu_1^*E\twoheadrightarrow i_{1_*}\reg_{\IP(E)}(1)$ where $i_1=(\nu_1,\id)$, the first surjection is the restriction map, and the second morphism is the push-forward of the tautological surjection $\nu_1^*E\twoheadrightarrow \reg_{\IP(E)}(1)$; see \cite[Prop.\ 2.6]{BFP} for details.

Now, assume that we have already constructed $\nu_{d-1}\colon \Flag_{d-1}\to C^{d-1}$ and on $C\times \Flag_{d-1}$ a universal length $d-1$ flag
\begin{equation}\label{eq:Flagd-1}
 E\boxtimes \reg_{\Flag_{d-1}}\twoheadrightarrow \cQ^{d-1}_{d-1}\twoheadrightarrow \dots \twoheadrightarrow \cQ^{d-1}_1 \twoheadrightarrow \cQ^{d-1}_0=0\,.
\end{equation}
In particular, denoting the flat family of length $d-1$ quotients by $\wcQ_{d-1}:=\cQ^{d-1}_{d-1}$, we have a short exact sequence sequence
\[
0\to \wcK_{d-1}\to E\boxtimes \reg_{\Flag_{d-1}}\to \wcQ_{d-1}\to 0\,. 
\]
The kernel $\wcK_{d-1}$ is again a vector bundle with $\rk\wcK_{d-1}=\rk E$ on $C\times \Flag_{d-1}$; see e.g.\ the argument for local freeness of $\cK$ in \cite[Sect.\ 2.2]{BFP}. 
We define $\Flag_d$ as the associated $\IP$-bundle
\begin{equation}\label{eq:p}
p=(p_1,p_2)\colon \Flag_d:=\IP(\wcK_{d-1})\to C\times \Flag_{d-1}\,.
\end{equation}
together with the morphism
\begin{equation}\label{eq:nufactor}
\nu_d=(\id_C\times \nu_{d-1})\circ p\colon \Flag_d\to C\times C^{d-1}=C^d\,.
\end{equation}     
To construct the universal flag, we write 
\[t=(\id,p_2)\colon C\times \Flag_d\to C\times \Flag_{d-1}\quad,\quad i=(p_1,\id)\colon \Flag_d\to C\times \Flag_d\,.\] 
By the flatness of $t$, we have the short exact sequence 
\begin{align*}
0\to t^*\wcK_{d-1}\to E\boxtimes \reg_{\Flag_{d}}\to t^*\wcQ_{d-1}\to 0\,. 
\end{align*}
Now, noting that $t\circ i=p$, we have a restriction map $t^*\wcK_{d-1}\twoheadrightarrow i_*i^*t^*\wcK_{d-1}\cong i_*p^*\wcK_{d-1}$. Composing this with the push-forward of the tautological surjection $p^*\wcK_{d-1}\twoheadrightarrow\reg_p(1)$ gives a surjection $t^*\wcK_{d-1}\twoheadrightarrow i_*\reg_p(1)$. We denote its kernel by $\wcK_d$ which means that we have a short exact sequence 
\[
 0\to \wcK_d\to t^*\wcK_{d-1}\to i_*\reg_p(1)\to 0\,.
\]
We denote the cokernel of the inclusion $\wcK_d\subset E\boxtimes \reg_{\Flag_d}$ by $\wcQ_d=\cQ^d_d$ which means
\[
0\to \wcK_{d}\to E\boxtimes \reg_{\Flag_{d}}\to \wcQ_{d}\to 0\,. 
\]
Comparing the above short exact sequences gives, via the snake lemma, another short exact sequence
\begin{equation}\label{eq:sesinduction}
0\to i_*\reg_p(1)\to \wcQ_d\to t^*\wcQ_{d-1}\to 0\,,
\end{equation}
which we will use heavily later for our induction arguments.
This $\wcQ_d=\cQ_d^d$ completes the pull-back of the universal full length $d-1$ flag \eqref{eq:Flagd-1} along $t$ to the universal full length $d$ flag which, setting $\cQ^d_i:=t^*\cQ^{d-1}_i$ for $i=0,\dots,d-1$, reads 
\begin{equation*}
 E\boxtimes \reg_{\Flag_{d}}\twoheadrightarrow \cQ_d^d \twoheadrightarrow \cQ^{d-1}_{d-1}\twoheadrightarrow \dots \twoheadrightarrow \cQ^{d-1}_1 \twoheadrightarrow \cQ^{d-1}_0=0\,.
\end{equation*}
Let $f_d\colon \Flag_d\to \Quot_d$ be the classifying morphism for $E\boxtimes \reg_{\Flag_{d}}\twoheadrightarrow \cQ_d^d=\wcQ_d$, which means $(\id_C\times f)^*\cQ_d\cong \wcQ_d$. We call $f_d$ the \emph{Flag--Quot morphism}.

Since $\mu_d\circ f_d=\pi_d\circ \nu_d$, we get an induced morphism $g_d\colon \Flag_d\to Y_d:= C^d\times_{C^{(d)}} \Quot_d$
to the fibre product making the following diagram commute
\begin{equation}\label{eq:Flagdiag}
 \xymatrix{
\Flag_d \ar@/^5mm/^{f_d}[rrd]  \ar@/_5mm/^{\nu_d}[ddr] \ar^{g_d}[dr] &  &    \\
 & Y_d \ar^{\mu_d'}[d] \ar^{\pi_d'}[r] &  \Quot_d \ar^{\mu_d}[d]  \\
 & C^d \ar^{\pi_d}[r] &  C^{(d)}\,.
 }
\end{equation}

In the following, in order to lighten the notation, we will often drop the indices $(\_)_d$ from the morphisms when they should be clear from the context.

\begin{lemma}\label{lem:nupush}
The variety $\Flag_d$ is smooth of dimension $d\cdot \rk E$. Furthermore, the morphisms $p\colon \Flag_d=\IP(\wcK_{d-1})\to C\times \Flag_{d-1}$ and $\nu\colon \Flag_d\to C^d$ have the following properties: 
\begin{align*}
Rp_*\reg_p(\ell)&\cong 0 \quad \text{for } -\rk E<\ell<0\,, \tag{i}\\
Rp_*\reg_p(1)&\cong \wcK_{d-1}\,, \tag{ii}\\ 
Rp_*\reg_{\Flag_d}&\cong \reg_{C\times \Flag_{d-1}}\,, \tag{iii}\\
Rp_*\circ p^*&\cong \id \colon \D(C\times\Flag_{d-1})\to \D(C\times\Flag_{d-1})\,, \tag{iv}\\
R\nu_*\reg_{\Flag_d}&\cong \reg_{C^d}\,.\tag{v}
\end{align*}
\end{lemma}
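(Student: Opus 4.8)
The plan is to prove the five statements by induction on $d$, exploiting the inductive construction $\Flag_d = \IP(\wcK_{d-1}) \xrightarrow{p} C \times \Flag_{d-1}$ and standard facts about cohomology of line bundles on projective bundles. The base cases $d=0$ ($\Flag_0$ a point) and $d=1$ ($\Flag_1 = \IP(E)$, with $\wcK_0 = E$) are immediate. The dimension claim is clear: $\dim \Flag_d = \dim(C \times \Flag_{d-1}) + (\rk \wcK_{d-1} - 1) = 1 + (d-1)\rk E + (\rk E - 1) = d \cdot \rk E$, using $\rk \wcK_{d-1} = \rk E$; smoothness follows since $\Flag_d$ is a projective bundle over the smooth variety $C \times \Flag_{d-1}$.

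For (i) and (ii), I would invoke the relative Euler sequence / the standard computation of $Rp_* \reg_{\IP(\wcK_{d-1})}(\ell)$ for a $\IP^{r-1}$-bundle with $r = \rk E$: one has $Rp_*\reg_p(\ell) = 0$ for $-r < \ell < 0$ (all higher direct images vanish in this range, and the $\ell$-th symmetric power is zero for $-r<\ell\le -1$... more precisely $p_*\reg_p(\ell)=0$ for $\ell<0$ and $R^{r-1}p_*\reg_p(\ell)\cong (S^{-\ell-r}\wcK_{d-1})^\vee \otimes \det$-twist which vanishes for $-\ell-r<0$, i.e. $\ell>-r$). Depending on the sign convention for $\reg_p(1)$ in the paper (here $\reg_p(1)$ is a quotient of $p^*\wcK_{d-1}$, so $p_*\reg_p(1) = \wcK_{d-1}$ and $p_*\reg_p(\ell) = S^\ell \wcK_{d-1}$ for $\ell \ge 0$, with all higher direct images vanishing for $\ell \ge 0$), this gives (ii) directly and (i) from the vanishing range. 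Statement (iii) is the $\ell = 0$ case of this: $Rp_*\reg_{\Flag_d} = Rp_*\reg_p(0) = \reg_{C \times \Flag_{d-1}}$. Statement (iv) then follows from (iii) by the projection formula: $Rp_*p^*(\_) \cong (\_) \otimes Rp_*\reg_{\Flag_d} \cong (\_)$.

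For (v), I would factor $\nu_d = (\id_C \times \nu_{d-1}) \circ p$ as in \eqref{eq:nufactor} and compute
\[
R\nu_{d*}\reg_{\Flag_d} \cong R(\id_C \times \nu_{d-1})_* Rp_* \reg_{\Flag_d} \cong R(\id_C \times \nu_{d-1})_* \reg_{C \times \Flag_{d-1}} \cong \reg_C \boxtimes R\nu_{d-1,*}\reg_{\Flag_{d-1}} \cong \reg_C \boxtimes \reg_{C^{d-1}} \cong \reg_{C^d}\,,
\]
using (iii), flat base change for $\id_C \times \nu_{d-1}$ over the projection $C^d \to C^{d-1}$, and the inductive hypothesis $R\nu_{d-1,*}\reg_{\Flag_{d-1}} \cong \reg_{C^{d-1}}$.

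The only genuinely delicate point is getting the conventions and the precise vanishing range in (i)--(ii) right: one must be careful about which of $\reg_p(1)$ or $\reg_p(-1)$ is the relative ample/tautological quotient line bundle on $\IP(\wcK_{d-1})$, and correspondingly whether the relevant Serre-duality twist by $\det \wcK_{d-1}$ and $\omega_p$ enters. Once the convention matching the paper's construction (where $p^*\wcK_{d-1} \twoheadrightarrow \reg_p(1)$) is fixed, (i)--(iii) are the textbook formulas for direct images of $\reg(\ell)$ on a projective bundle, and (iv)--(v) are formal consequences via the projection formula and base change. I do not anticipate a substantive obstacle beyond this bookkeeping.
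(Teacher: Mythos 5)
Your proposal is correct and follows essentially the same route as the paper: smoothness and dimension from the iterated $\IP^{\rk E-1}$-bundle structure, (i)--(iii) from the standard direct-image formulas for $\reg(\ell)$ on a projective bundle (the paper cites \cite[App.\ A]{Lazarsfeld--posbookI}), (iv) from (iii) plus the projection formula, and (v) by iterating (iii) along the factorisation $\nu_d=(\id_C\times\nu_{d-1})\circ p$. Your remark on conventions is resolved exactly as you suspect: the paper uses the Grothendieck convention with the tautological surjection $p^*\wcK_{d-1}\twoheadrightarrow\reg_p(1)$, so $p_*\reg_p(1)\cong\wcK_{d-1}$ holds as stated.
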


\begin{proof}
Everything follows from the iterated $\IP$-bundle construction of $\Flag_d$. Indeed, every time we move from $\Flag_{i-1}$ to $\Flag_i$, the dimension increases by $\rk E$. Namely, we get one extra dimension by adding the factor $C$, and $\rk E-1$ extra dimensions by forming the $\IP^{\rk E-1}$-bundle $p\colon \Flag_d=\IP(\wcK_{d-1})\to C\times \Flag_{d-1}$. 

Formulas (i), (ii), and (iii) hold for every projective bundle; see e.g.\ \cite[App.\ A]{Lazarsfeld--posbookI}. Formula (iv) follows from (iii) and the projection formula. Formula (v) follows from an iterative use of (iii). 
\end{proof}

\begin{lemma}\label{lem:gpush}
For every $d\in \IN$, we have $Rg_*\reg_{\Flag_d}\cong \reg_{Y_d}$.
\end{lemma}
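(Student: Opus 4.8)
The plan is to argue by induction on $d$; for $d\le 1$ the morphism $g=g_d$ is an isomorphism, so there is nothing to prove. Assume $d\ge 2$ and that $Rg_{d-1*}\reg_{\Flag_{d-1}}\cong\reg_{Y_{d-1}}$.

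I would factor $g_d$ through an incidence scheme. Let $\cN_d$ parametrise data $\bigl((x_1,\dots,x_d),[E\twoheadrightarrow Q_d\twoheadrightarrow Q_{d-1}]\bigr)$ with $\ell(Q_i)=i$, $\mu_d(Q_d)=x_1+\dots+x_d$ and $\supp\ker(Q_d\to Q_{d-1})=\{x_1\}$ (which forces $\mu_{d-1}(Q_{d-1})=x_2+\dots+x_d$). Forgetting the pair $(x_1,Q_d)$ gives $h_d\colon\cN_d\to Y_{d-1}$, forgetting $Q_{d-1}$ gives $q_d\colon\cN_d\to Y_d$, and forgetting everything below $Q_{d-1}$ in a full flag gives $\kappa_d\colon\Flag_d\to\cN_d$ with $g_d=q_d\circ\kappa_d$. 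From the universal families one should read off, first, that $\kappa_d$ identifies $\Flag_d$ with the fibre product $\cN_d\times_{Y_{d-1}}\Flag_{d-1}$ taken along $h_d$ and $g_{d-1}$ (a length-$d$ flag is a length-$(d-1)$ flag together with a one-step extension on top), and second, that over $C\times Y_{d-1}$ the scheme $\cN_d$ is the relative $\IP(\,\cdot\,)$ of $(\id_C\times\pi_{d-1}')^*\cK_{d-1}$: choosing a top step $K_d\subset K_{d-1}$ with $K_{d-1}/K_d\cong k_{x_1}$ is the same as choosing a one-dimensional quotient of the fibre of $\cK_{d-1}$ at $x_1$. In particular $\cN_d\to C\times Y_{d-1}$ is a $\IP^{\rk E-1}$-bundle, so $h_d$ (its composition with $\pr_{Y_{d-1}}$) is flat.

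Then flat base change along $h_d$ together with the induction hypothesis gives $R\kappa_{d*}\reg_{\Flag_d}\cong h_d^*\,Rg_{d-1*}\reg_{\Flag_{d-1}}\cong\reg_{\cN_d}$, hence $Rg_{d*}\reg_{\Flag_d}\cong Rq_{d*}\reg_{\cN_d}$, and it suffices to show $Rq_{d*}\reg_{\cN_d}\cong\reg_{Y_d}$. The fibre of $q_d$ over $\bigl((x_1,\dots,x_d),[E\twoheadrightarrow Q_d]\bigr)$ is the set of length-$1$ subsheaves of the stalk $(Q_d)_{x_1}$, i.e.\ the lines in its socle, hence a (reduced) projective space $\IP^{k}$ with $0\le k\le\rk E-1$; over the locus of distinct points it is a single reduced point, so $q_d$ is birational. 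Since $H^{>0}(\IP^{k},\reg)=0$ and $H^{0}(\IP^{k},\reg)=\IC$ for every $k$, one expects $R^{>0}q_{d*}\reg_{\cN_d}=0$ and that $q_{d*}\reg_{\cN_d}$ has one-dimensional fibres, whence the natural map $\reg_{Y_d}\to q_{d*}\reg_{\cN_d}$ is an isomorphism by Nakayama; this would close the induction.

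The delicate point is exactly this last step: $q_d$ has jumping fibre dimension, so it is not flat, and making the cohomology-and-base-change argument rigorous requires controlling the singularities of $Y_d$. The cleanest organisation is probably to prove separately that $Y_d$ is normal -- indeed has rational singularities -- using that it is a local complete intersection inside the smooth variety $C^d\times\Quot_d$, cut out by the $d$ equations ``$\pi_d(x_1,\dots,x_d)=\mu_d(Q)$'', and that its singular locus has codimension at least two; then \autoref{lem:gpush} follows at once since $\Flag_d\to Y_d$ is a resolution. This ultimately rests on the local description of the fibres $\Quot_D$ of the Quot--Chow morphism from \cite{Gango-Sebastian,BGS}, so an alternative is to deduce \autoref{lem:gpush} by globalising over $C^{(d)}$ the fibrewise statement $Rg_{\vec x*}\reg_{S_{\vec x}}\cong\reg_{\Quot_D}$, with only the fibre-product and base-change compatibilities left to verify.
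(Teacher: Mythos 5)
Your reduction is genuinely different from the paper's argument and the first half of it is sound: defining $\cN_d$ as the projectivisation of $(\id_C\times\pi'_{d-1})^*\cK_{d-1}$ over $C\times Y_{d-1}$, identifying $\Flag_d\cong\cN_d\times_{Y_{d-1}}\Flag_{d-1}$ (base change of a projective bundle along $\id\times g_{d-1}$ pulls the kernel back to $\wcK_{d-1}$), and then using flat base change along the flat map $h_d$ plus the induction hypothesis to get $R\kappa_{d*}\reg_{\Flag_d}\cong\reg_{\cN_d}$, hence $Rg_{d*}\reg_{\Flag_d}\cong Rq_{d*}\reg_{\cN_d}$, all looks correct (modulo routine moduli verifications). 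But the step you yourself flag as delicate is exactly where the content of the lemma sits, and your proposed fix does not close it. Knowing that every fibre of $q_d$ is a projective space with trivial structure-sheaf cohomology proves nothing when $q_d$ is not flat and $Y_d$ may be singular or non-normal: the normalisation of a cuspidal curve has single reduced points as fibres, yet the push-forward of the structure sheaf is strictly larger than $\reg$. And the suggested repair is flawed on both counts: the lci presentation of $Y_d$ inside $C^d\times\Quot_d$ (preimage of the diagonal of the smooth $d$-fold $C^{(d)}$, with $\dim Y_d=d\cdot\rk E$) together with a codimension-two singular locus gives normality via Serre's criterion, but it does \emph{not} give rational singularities -- normal lci singularities need not be rational (a cone over a plane cubic is a standard counterexample). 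Worse, "prove $Y_d$ has rational singularities, then conclude because $\Flag_d\to Y_d$ is a resolution" is circular: given normality, having rational singularities \emph{is} the statement $Rg_*\reg_{\Flag_d}\cong\reg_{Y_d}$ you are trying to prove.

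Your closing alternative -- globalising the fibrewise statement $Rg_{\vec x*}\reg_{S_{\vec x}}\cong\reg_{\Quot_D}$ from \cite{Gango-Sebastian,BGS} -- is in fact the paper's proof, but it is not "only fibre-product and base-change compatibilities": naive base change fails here, so one must check that each fibre inclusion $j_{\vec x}\colon\Quot_D\hookrightarrow Y_d$ is a \emph{regular} closed embedding of the right codimension (this uses flatness of $\mu$, hence of $\mu'$, from \cite[Cor.\ 6.3]{Gango-Sebastian}, plus the equality $\dim S_{\vec x}=\dim\Quot_D$ and $\dim\Flag_d=\dim Y_d=d\cdot\rk E$) so that the derived base-change theorem \cite[Cor.\ 2.27]{Kuz} applies and gives $Lj_{\vec x}^*Rg_*\reg_{\Flag_d}\cong\reg_{\Quot_D}$ for every $\vec x$; one then still needs a spectral-sequence argument to upgrade this fibrewise triviality to the conclusion that $Rg_*\reg_{\Flag_d}$ is concentrated in degree zero and is a line bundle, hence the trivial one. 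If you want to salvage your inductive route through $\cN_d$, you would have to run an analogous derived-restriction argument for $q_d$ (or prove directly, by some independent input, that $Y_d$ is normal with rational singularities); as written, the proof is incomplete at its decisive step.
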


\begin{proof}
Given $\vec x=(x_1,\dots,x_d)\in C^d$, the map $g_{\vec x}\colon S_{\vec x}\to \Quot_D$ is the fibre of $g$ over $\vec x \in C^d$, which means that we have a cartesian diagram
\[
\xymatrix{
S_{\vec x}\ar^{j'_{\vec x}}[d]  \ar^{g_{\vec x}}[r] & \Quot_D \ar^{j_{\vec x}}[d] \ar[r] & \{\vec x\} \ar[d]\\
\Flag_d \ar^g[r]   & Y_d \ar^{\mu'}[r] & C^d
}
\]
The map $g_{\vec x}$ is birational by \cite[Sect.\ 5]{Gango-Sebastian} and satisfies
\[Rg_{\vec x*}\reg_{S_{\vec x}}\cong \reg_{\Quot_D}\]
by \cite[Proof of Cor.\ 6.2]{BGS}.
In particular, $S_{\vec x}$ and $\Quot_D$ have the same dimension. Also $\Flag_d$ and $Y_d$ have the same dimension, namely $d\cdot \rk E$; see \autoref{lem:nupush} and \autoref{prop:Quotbasicinfo}(i). Hence, the closed embeddings $j_{\vec x}$ and $j'_{\vec x}$ have the same codimension.

As the morphism $\mu\colon \Quot_d\to C^{(d)}$ is flat by \cite[Cor.\ 6.3]{Gango-Sebastian} so is $\mu'$. It follows that $j_{\vec x}$ is a regular closed embedding (regular closed embeddings are stable under flat base change; see \cite[Prop.\ VIII 1.6]{SGA6}).
Hence, the assumptions of the base change theorem \cite[Cor.\ 2.27]{Kuz} are satisfied, which gives
\begin{equation}\label{eq:degzeroconc}
 Lj_{\vec x}^*Rg_*\reg_{\Flag_d}\cong Rg_{\vec x*} Lj_{\vec x}'^* \reg_{\Flag_d}\cong Rg_{\vec x*}\reg_{S_{\vec x}}\cong \reg_{\Quot_D}\,.
\end{equation}
In particular, $Lj_{\vec x}^*Rg_*\reg_{\Flag_d}$ is concentrated in degree 0 for all $\vec x\in C^d$. As the $j_{\vec x}(\Quot_D)$ for varying $\vec x\in C^n$ cover all of $Y_d$, it follows that also $Rg_*\reg_{\Flag_d}$ is concentrated in degree 0. Indeed, set $m:=\max\{i\mid R^ig_*\reg_{\Flag_d}\neq 0\}$. Then, we have some $\vec x\in C^d$ such that
\[
\supp\bigl( R^mg_*\reg_{\Flag_d}\bigr)\cap j_{\vec x}(\Quot_D)\neq \emptyset\,.
\]
For this $\vec x$, we have $L^0g_{\vec x}^*R^mg_*\reg_{\Flag_d}\neq 0$. The spectral sequence (see e.g.\ \cite[(3.10)]{Huy})
\[
 E_2^{p,q}=L^pj_{\vec x}^*R^pg_*\reg_{\Flag_d}\quad\Longrightarrow E^{n}= \cH^n\bigl(Lj_{\vec x}^*Rg_*\reg_{\Flag_d}\bigr)
\]
then shows that $m=0$, as $E^n\cong 0$ for $n\neq 0$ by \eqref{eq:degzeroconc}. Further studying the same spectral sequence, we see that, for all $\vec x\in C$,
\[
L^p j_{\vec x}^*g_*\reg_{\Flag_d}\cong \begin{cases}
             0\quad&\text{for $p<0$,}\\
             \reg_{\Quot_D}\quad&\text{for $p=0$.}
             \end{cases}
 \]
It follows that $g_*\reg_{\Flag_d}$ is a line bundle. But, if the push-forward of a structure sheaf is a line bundle, it is already the trivial line bundle (if $g_*\reg_{\Flag_d}$ is a line bundle, the morphism $\Spec_{Y_d}(g_*\reg_{\Flag_d})\to Y_d$ is an isomorphism).
\end{proof}

\begin{cor}\label{cor:gfaith}
For every $d$, we have an isomorphism of functors
\[
Rg_*\circ Lg^*\cong \id\colon \D(Y_d)\to \D(Y_d)\,.
\]
\end{cor}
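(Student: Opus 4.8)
The plan is to derive this from the projection formula together with \autoref{lem:gpush}. First I would recall the standard fact that, for a proper morphism $g\colon \Flag_d\to Y_d$ between smooth projective varieties and any object $\mathcal{F}\in \D(Y_d)$, the projection formula gives
\[
Rg_*\bigl(Lg^*\mathcal{F}\bigr)\cong \mathcal{F}\lotimes Rg_*\reg_{\Flag_d}\,.
\]
This is where the key input enters: by \autoref{lem:gpush} we have $Rg_*\reg_{\Flag_d}\cong \reg_{Y_d}$, so the right-hand side is simply $\mathcal{F}\lotimes \reg_{Y_d}\cong \mathcal{F}$. Since this isomorphism is functorial in $\mathcal{F}$ (the projection formula isomorphism is natural), one obtains $Rg_*\circ Lg^*\cong \id$ as an isomorphism of functors $\D(Y_d)\to \D(Y_d)$.

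The one point that needs a word of justification is the applicability of the projection formula in this generality. The morphism $g=g_d\colon \Flag_d\to Y_d$ is a morphism of schemes of finite type over $\IC$; $\Flag_d$ is smooth and projective by \autoref{lem:nupush}, and $Y_d=C^d\times_{C^{(d)}}\Quot_d$ is projective (as a closed subscheme of $C^d\times \Quot_d$), so $g$ is proper. For a proper morphism, the projection formula $Rg_*(Lg^*\mathcal{F}\lotimes \mathcal{G})\cong \mathcal{F}\lotimes Rg_*\mathcal{G}$ holds for $\mathcal{F},\mathcal{G}\in \D(Y_d)$, $\D(\Flag_d)$ respectively (see e.g.\ the standard references on derived categories of coherent sheaves). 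Taking $\mathcal{G}=\reg_{\Flag_d}$ and using $Lg^*\mathcal{F}\lotimes \reg_{\Flag_d}\cong Lg^*\mathcal{F}$ yields exactly what is needed. One could also note that $Y_d$ need not be assumed smooth here, since the identity $Lg^*\mathcal{F}\lotimes\reg_{\Flag_d}\cong Lg^*\mathcal{F}$ is trivially valid for any $\mathcal{F}$.

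I do not expect a genuine obstacle: the real content has already been extracted in \autoref{lem:gpush}, and the corollary is a formal consequence. If anything, the only care required is to make sure the naturality of the projection-formula isomorphism is invoked, so that we obtain an isomorphism of functors and not merely a pointwise isomorphism; this is standard, as the projection-formula morphism is constructed from the unit and counit of the adjunction $(Lg^*, Rg_*)$ together with the lax-monoidal structure of $Rg_*$, all of which are natural transformations. Hence the proof is essentially: apply the projection formula, substitute \autoref{lem:gpush}, and observe naturality.

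\begin{proof}
Since $\Flag_d$ is projective (by \autoref{lem:nupush} it is an iterated projective bundle over $C^d$) and $Y_d$ is a scheme of finite type over $\IC$, the morphism $g=g_d\colon \Flag_d\to Y_d$ is proper, so the projection formula applies: for every $\mathcal{F}\in \D(Y_d)$ there is a natural isomorphism
\[
Rg_*\bigl(Lg^*\mathcal{F}\bigr)\cong Rg_*\bigl(Lg^*\mathcal{F}\lotimes \reg_{\Flag_d}\bigr)\cong \mathcal{F}\lotimes Rg_*\reg_{\Flag_d}\,.
\]
By \autoref{lem:gpush}, $Rg_*\reg_{\Flag_d}\cong \reg_{Y_d}$, hence the right-hand side is isomorphic to $\mathcal{F}\lotimes \reg_{Y_d}\cong \mathcal{F}$. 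All isomorphisms involved are natural in $\mathcal{F}$, so this gives the asserted isomorphism of functors $Rg_*\circ Lg^*\cong \id\colon \D(Y_d)\to \D(Y_d)$.
\end{proof}
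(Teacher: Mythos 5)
Your proof is correct and is essentially the paper's own argument: the paper deduces \autoref{cor:gfaith} precisely from \autoref{lem:gpush} via the projection formula. The extra remarks on properness and naturality are fine but not needed beyond what the paper tacitly assumes.
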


\begin{proof}
 This follows from \autoref{lem:gpush} using the projection formula.
\end{proof}

\begin{cor}\label{prop:McKay}
For every $d$, we have an isomorphism of functors
\[
\pi^*\circ R\mu_{*}\cong R\nu_{*}\circ Lf^*\colon \D(\Quot_d)\to \D(C^d)\,.
\]
\end{cor}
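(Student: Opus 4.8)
The plan is to deduce \autoref{prop:McKay} formally from the cartesian square sitting in the lower right of diagram \eqref{eq:Flagdiag}, namely $Y_d=C^d\times_{C^{(d)}}\Quot_d$ with the maps $\pi',\mu',\pi,\mu$, together with the identity $Rg_*\circ Lg^*\cong\id$ established in \autoref{cor:gfaith}. All of the geometric content has already been packed into \autoref{lem:gpush}, so what remains is a formal manipulation and there is no real obstacle to anticipate.

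First I would apply flat base change to that cartesian square. Since $\pi\colon C^d\to C^{(d)}$ is flat by \autoref{lem:piflat} (and $\mu$ is proper), flat base change yields an isomorphism of functors $\pi^*\circ R\mu_*\cong R\mu'_*\circ\pi'^*$. Moreover $\pi'$, being the base change of the flat morphism $\pi$, is itself flat, so $\pi'^*=L\pi'^*$ is exact; this will be convenient in the next step.

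Next I would use the two factorisations $\nu=\mu'\circ g$ and $f=\pi'\circ g$ recorded in \eqref{eq:Flagdiag}. They give $R\nu_*\cong R\mu'_*\circ Rg_*$ and $Lf^*\cong Lg^*\circ L\pi'^*\cong Lg^*\circ\pi'^*$. Composing and invoking \autoref{cor:gfaith},
\begin{equation*}
R\nu_*\circ Lf^*\;\cong\; R\mu'_*\circ Rg_*\circ Lg^*\circ\pi'^*\;\cong\; R\mu'_*\circ\pi'^*\,.
\end{equation*}
Comparing with the isomorphism from the first step gives $\pi^*\circ R\mu_*\cong R\mu'_*\circ\pi'^*\cong R\nu_*\circ Lf^*$, which is the assertion.

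Since the argument is purely formal, relying only on flat base change, compatibility of (derived) pushforwards with composition, and the previously established $Rg_*\circ Lg^*\cong\id$, I do not expect a genuine difficulty. The only point requiring a little care is verifying the hypotheses of flat base change, and these follow at once from the flatness of $\pi$ (\autoref{lem:piflat}) together with the properness of $\mu$; the substantive work was already carried out in \autoref{lem:gpush}.
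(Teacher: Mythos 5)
Your argument is correct and is exactly the paper's proof: flat base change along the cartesian square gives $\pi^*\circ R\mu_*\cong R\mu'_*\circ\pi'^*$, and inserting $Rg_*\circ Lg^*\cong\id$ from \autoref{cor:gfaith} together with the factorisations $\nu=\mu'\circ g$, $f=\pi'\circ g$ yields $R\nu_*\circ Lf^*$. No issues.
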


\begin{proof}
Using flat base change along \eqref{eq:Flagdiag} and \autoref{cor:gfaith}, we get
\[
\pi^*\circ R\mu_*\cong R\mu'_*\circ \pi'^*\cong R\mu'_*\circ Rg_*\circ Lg^*\circ \pi'^*\cong R\nu_*\circ Lf^*\,.    \qedhere
\]
\end{proof}

\section{Vanishing of products of duals}\label{sect:dualvanish}

In this section, we prove \autoref{thm:maindualvanish}. Using \autoref{prop:McKay}, we translate the problem to a computation on $\Flag_d$, where we can do induction using the short exact sequences of \autoref{lem:tautses} below. 
Given $F\in \VB(C)$, we denote the pull-back of the associated tautological bundle along the Flag--Quot morphism by
\[
 F^{\langle d \rangle}:=f_d^*F^{\llb d\rrb}\in \VB(\Flag_d)\,.
\]

\begin{lemma}\label{lem:tautses}
For every $F\in \VB(C)$, we have the following two short exact sequences of vector bundles on $\Flag_d$:
\begin{align}
 &0\to p_1^*F\otimes \reg_p(1)\to  F^{\langle d\rangle}\to p_2^*F^{\langle d-1\rangle}\to 0  \label{eq:tautbundleses}
\\
& 0\to p_2^*F^{\langle d-1\rangle \vee} \to  F^{\langle d\rangle\vee }\to  p_1^*F^\vee\otimes \reg_p(-1) \to 0
\label{eq:tautbundlesesdual}
 \end{align}
\end{lemma}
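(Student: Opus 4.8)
The statement asks for two short exact sequences on $\Flag_d=\IP(\wcK_{d-1})$ relating the pulled-back tautological bundle $F^{\langle d\rangle}=f_d^*F^{\llb d\rrb}$ to $p_2^*F^{\langle d-1\rangle}$ and the line bundle twist $p_1^*F\otimes\reg_p(1)$. The natural strategy is to unwind the definition of $F^{\llb d\rrb}$ as a Fourier--Mukai-type push-forward $\pr_{2*}(\pr_1^*F\otimes\cQ_d)$ and pull everything back along $f_d$. Since $(\id_C\times f_d)^*\cQ_d\cong\wcQ_d$, base change along the square relating $\pr_2$ and $f_d$ (using flatness of $\wcQ_d$, equivalently finiteness of the relevant push-forward) gives
\[
F^{\langle d\rangle}\cong f_d^*\pr_{2*}(\pr_1^*F\otimes\cQ_d)\cong \opr_{2*}\bigl((\id_C\times f_d)^*(\pr_1^*F\otimes\cQ_d)\bigr)\cong \opr_{2*}(\pr_1^*F\otimes\wcQ_d)\,,
\]
where $\opr_2\colon C\times\Flag_d\to\Flag_d$ is the projection. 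The key input is then the short exact sequence \eqref{eq:sesinduction},
\[
0\to i_*\reg_p(1)\to \wcQ_d\to t^*\wcQ_{d-1}\to 0\,,
\]
constructed in \autoref{sect:McKay}. The plan is to tensor this with $\pr_1^*F$ (flat, so exactness is preserved) and push forward along $\opr_2$.

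For the first sequence \eqref{eq:tautbundleses}: the right-hand term pushes to $\opr_{2*}(\pr_1^*F\otimes t^*\wcQ_{d-1})$. Here I would use $t=(\id_C,p_2)\colon C\times\Flag_d\to C\times\Flag_{d-1}$ together with the identity $\opr_2=p\circ\opr_2'$ appropriately, or more directly flat base change along the square formed by $t$ and the two projections to $\Flag_d$, $\Flag_{d-1}$, to identify this with $p_2^*\opr_{2*}(\pr_1^*F\otimes\wcQ_{d-1})=p_2^*F^{\langle d-1\rangle}$; one uses that $\pr_1^*F$ on $C\times\Flag_d$ is the pullback of $\pr_1^*F$ on $C\times\Flag_{d-1}$. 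For the left-hand term, $\opr_{2*}(\pr_1^*F\otimes i_*\reg_p(1))$: by the projection formula along the closed embedding $i=(p_1,\id)\colon\Flag_d\hookrightarrow C\times\Flag_d$ and $i^*\pr_1^*F\cong p_1^*F$, this equals $\opr_{2*}i_*(p_1^*F\otimes\reg_p(1))$, and since $\opr_2\circ i=\id_{\Flag_d}$, it is just $p_1^*F\otimes\reg_p(1)$. Finally, all higher direct images vanish: $\wcQ_d$ and $\wcQ_{d-1}$ are finite over the respective base (length $d$, resp.\ $d-1$, quotient families), and $i_*\reg_p(1)$ is supported on a subvariety finite over $\Flag_d$; hence pushing forward the tensored sequence along $\opr_2$ stays a short exact sequence of vector bundles (local freeness being automatic since all three are tautological or line bundles on the smooth $\Flag_d$). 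This yields \eqref{eq:tautbundleses}.

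The second sequence \eqref{eq:tautbundlesesdual} is then obtained by dualising \eqref{eq:tautbundleses}: applying $\sHom(-,\reg_{\Flag_d})$ to a short exact sequence of vector bundles on a smooth variety gives again a short exact sequence, reversing the arrows, so
\[
0\to p_2^*F^{\langle d-1\rangle\vee}\to F^{\langle d\rangle\vee}\to (p_1^*F\otimes\reg_p(1))^\vee\to 0\,,
\]
and $(p_1^*F\otimes\reg_p(1))^\vee\cong p_1^*F^\vee\otimes\reg_p(-1)$, which is exactly the claimed sequence.

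The main obstacle is bookkeeping the base-change squares correctly: one must set up the commuting diagram of $C\times\Flag_d$, $C\times\Flag_{d-1}$, $\Flag_d$, $\Flag_{d-1}$, $C$ with the maps $t$, $p$, $p_2$, the two projections $\pr_1$, $\pr_2$, and verify the relevant squares are cartesian with one leg flat, so that $\opr_{2*}t^*\cong p_2^*\opr_{2*}$ on the objects in question; and to keep track of which $\reg_p(1)$ lives where (the one on $\Flag_d=\IP(\wcK_{d-1})$ relative to $p\colon\Flag_d\to C\times\Flag_{d-1}$). None of this is deep, but it is the place where a sign or index slip would hurt. Everything else — the projection formula along $i$, the vanishing of higher pushforwards by finiteness, and the dualisation — is routine.
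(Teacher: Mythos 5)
Your proposal is correct and follows essentially the same route as the paper: identify $F^{\langle d\rangle}$ with $\pr_{2*}(\pr_1^*F\otimes\wcQ_d)$ via base change along the classifying morphism, apply $\pr_{2*}(\pr_1^*F\otimes\_)$ to the sequence \eqref{eq:sesinduction}, compute the outer terms by the projection formula along $i$ (using $\pr_2\circ i=\id$) and flat base change along the square involving $t$ and $p_2$, note the sequence stays exact since the sheaves are finite over $\Flag_d$, and dualise for \eqref{eq:tautbundlesesdual}. No gaps.
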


\begin{proof}
As $f_d\colon \Flag_d\to \Quot_d$ is the classifying morphism for $\wcQ_d$, flat base change gives
\[
 F^{\langle d\rangle}\cong \pr_{2*}\bigl(\pr_1^*F\otimes \wcQ_d\bigr);
\]
compare \cite[Lem.\ 2.4]{Krug--reconstruction}.
We consider the following diagram with a cartesian square and commutative triangles:
\begin{equation}\label{eq:Flagtautdiag}
\xymatrix{
  & \Flag_d\ar^{i=(p_1,\id)}[d] \ar@/^4mm/^{\id}[dr] \ar@/_4mm/_{p_1}[dl]  &   \\
C  & C\times \Flag_d \ar_{\pr_1}[l] \ar^{\pr_2}[r] \ar_{t=\id\times p_2}[d] &  \Flag_d \ar^{p_2}[d]   \\
  & C\times \Flag_{d-1} \ar^{\pr_2}[r]  \ar@/^4mm/^{\pr_1}[ul]   &   \Flag_{d-1}\,.
 }
\end{equation}
We get the sequence \eqref{eq:tautbundleses} by applying the functor $\pr_{2*}(\pr_1^*F\otimes \_)$ to \eqref{eq:sesinduction}. Indeed, the sheaves in \eqref{eq:sesinduction} are finite over $\Flag_d$, so the sequence stays exact. The commutativity of triangles on the upper half of \eqref{eq:Flagtautdiag} together with the projection formula along $i$ give
\[
 \pr_{2*}\bigl(\pr_1^*F\otimes i_*\reg_p(1)\bigr)\cong p_1^*F\otimes \reg_p(1)
\]
which establishes the first term of \eqref{eq:tautbundleses}. Similarly, we get the third term of \eqref{eq:tautbundleses} using flat base change along the cartesian square in \eqref{eq:Flagtautdiag}.
The sequence \eqref{eq:tautbundlesesdual} is just the dual of \eqref{eq:tautbundleses}.
\end{proof}

\begin{lemma}\label{lem:filtration}
 Let $X$ be a variety, and let
 \[
 0\to F_i'\to F_i\to F_i''\to 0
 \]
be short exact sequences of vector bundles for $i=1,\dots m$. Let
$1\le k_i\le \rk F_i$ for $i=1,\dots, m$. Then
$\bigotimes_{i=1}^m \wedge^{k_i}F_i$ has a filtration whose factors are given by
 \[
  \left(\bigotimes_{i=1}^m \wedge^{k_i-\ell_i} F_i' \right)\otimes \left(\bigotimes_{i=1}^m \wedge^{\ell_i} F_i''    \right)\quad\text{ with } 0\le \ell_i\le \min\{k_i, \rk F_i''\} \text{ for } i=1,\dots, m\,.
 \]
\end{lemma}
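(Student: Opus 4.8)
The plan is to prove this by a standard multilinear-algebra argument, first for a single short exact sequence ($m=1$) and then taking the tensor product of the resulting filtrations.

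For $m=1$, recall the classical filtration of an exterior power attached to a short exact sequence $0\to F'\to F\to F''\to 0$ of vector bundles on a variety $X$. Locally the sequence splits, $F\cong F'\oplus F''$, and one has the decomposition $\wedge^k F\cong \bigoplus_{\ell} \wedge^{k-\ell}F'\otimes \wedge^\ell F''$; globally this is not a direct sum but gives a decreasing filtration
\[
\wedge^k F = \Fil^0 \supseteq \Fil^1\supseteq \dots \supseteq \Fil^{k+1}=0
\]
where $\Fil^j$ is the image of $\wedge^{k-j}F'\otimes \wedge^j F \to \wedge^k F$ (wedging the first factor into $\wedge^k F$ via $F'\hookrightarrow F$), with successive quotients $\Fil^\ell/\Fil^{\ell+1}\cong \wedge^{k-\ell}F'\otimes \wedge^\ell F''$. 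This is well known; it can be found for instance in the references on exterior powers of vector bundles, or deduced directly from the splitting principle. The only point to be careful about is the range of $\ell$: the quotient $\wedge^{k-\ell}F'\otimes\wedge^\ell F''$ is automatically zero unless $0\le k-\ell\le \rk F'$ and $0\le \ell\le\rk F''$, so the nonzero factors occur exactly for $0\le\ell\le\min\{k,\rk F''\}$ (using also $k-\ell\le \rk F'$, which holds whenever $k\le\rk F$, as assumed). Thus for each $i$ we obtain a filtration of $\wedge^{k_i}F_i$ with factors $\wedge^{k_i-\ell_i}F_i'\otimes\wedge^{\ell_i}F_i''$, $0\le\ell_i\le\min\{k_i,\rk F_i''\}$.

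For general $m$, one tensors these filtrations together. If $A$ has a filtration with factors $\{A_\alpha\}$ and $B$ has a filtration with factors $\{B_\beta\}$, then $A\otimes B$ (tensor product over $\reg_X$, with everything locally free) has a filtration whose factors are the $A_\alpha\otimes B_\beta$; this follows since tensoring a short exact sequence of vector bundles with a vector bundle preserves exactness, so one can refine the filtration of $A\otimes B$ induced by the filtration of $A$ using the filtration of $B$ on each graded piece. Iterating over $i=1,\dots,m$ gives a filtration of $\bigotimes_{i=1}^m\wedge^{k_i}F_i$ whose factors are precisely
\[
\Bigl(\bigotimes_{i=1}^m\wedge^{k_i-\ell_i}F_i'\Bigr)\otimes\Bigl(\bigotimes_{i=1}^m\wedge^{\ell_i}F_i''\Bigr),\qquad 0\le\ell_i\le\min\{k_i,\rk F_i''\},
\]
as claimed.

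There is no real obstacle here; the argument is entirely formal. The only mild subtlety is bookkeeping the index ranges so that one lists exactly the nonzero factors (rather than a redundant list that happens to contain zeros), and making sure the "tensor product of filtrations" step is phrased correctly — it is clean because all sheaves involved are locally free, so no $\Tor$-terms intervene and exactness is preserved under $\otimes$. One could also simply invoke the splitting principle to reduce to the case where all the sequences split, in which case the statement becomes the evident decomposition of $\bigotimes_i\wedge^{k_i}(F_i'\oplus F_i'')$ into summands; the filtration statement is the globalisation of this.
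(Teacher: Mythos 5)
Your proof is correct and follows essentially the same route as the paper, which disposes of this lemma in one line as basic linear algebra translated to vector bundles, pointing to Hartshorne, Exercise II.5.16 --- precisely the $m=1$ filtration you spell out, combined with the routine tensoring of filtrations, which is unproblematic here because all sheaves involved are locally free. The only cosmetic slip is your indexing: with your definition of the $j$-th step as the image of $\wedge^{k-j}F'\otimes\wedge^{j}F\to\wedge^{k}F$, the filtration is increasing in $j$ (the $j=k$ step is all of $\wedge^{k}F$), not decreasing as written, but the graded pieces are exactly the ones you list, so the argument stands.
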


\begin{proof}
 This is basic linear algebra translated from vector spaces to vector bundles; compare \cite[Exe.\ 5.16]{HarAGbook}.
\end{proof}

\begin{prop}\label{prop:nudualvanish}
Let $F_1,\dots, F_m$ be vector bundles on $C$, and let $1\le k_i\le d\cdot \rk F_i=\rk F_i^{\langle d \rangle}$ such that $\sum_{i=1}^m\min\{k_i, \rk F_i\}<\rk E$. 
Then, for all $d\in \IN$,
\[
 R\nu_{d*}\left(\bigotimes_{i=1}^m\wedge^{k_i} F_i^{\langle d \rangle \vee}\right)\cong 0\,.
\]
\end{prop}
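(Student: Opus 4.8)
The plan is to induct on $d$, using the iterated projective bundle structure $p\colon \Flag_d = \IP(\wcK_{d-1}) \to C \times \Flag_{d-1}$ together with the factorisation $\nu_d = (\id_C \times \nu_{d-1}) \circ p$ from \eqref{eq:nufactor}. The base case $d=0$ is trivial since $\Flag_0$ is a point and every $k_i \geq 1$ forces a wedge power of a rank-zero bundle, hence $0$. (One should also note the case where some $k_i > d\cdot\rk F_i$ is vacuous, but the hypothesis rules that out; and if any individual summand already vanishes by rank reasons we are done, so assume all $k_i \leq d\cdot\rk F_i$.) For the inductive step I would first compute $Rp_*$ of the relevant bundle, then apply $R(\id_C \times \nu_{d-1})_*$ and invoke the inductive hypothesis.

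The key computation is $Rp_*\bigl(\bigotimes_{i=1}^m \wedge^{k_i} F_i^{\langle d\rangle\vee}\bigr)$. Here I would apply \autoref{lem:filtration} to the $m$ short exact sequences \eqref{eq:tautbundlesesdual}, i.e.\ $0 \to p_2^* F_i^{\langle d-1\rangle\vee} \to F_i^{\langle d\rangle\vee} \to p_1^*F_i^\vee \otimes \reg_p(-1) \to 0$. The bundle $\bigotimes_i \wedge^{k_i} F_i^{\langle d\rangle\vee}$ then admits a filtration with factors
\[
\Bigl(\bigotimes_{i=1}^m \wedge^{k_i - \ell_i}\bigl(p_2^*F_i^{\langle d-1\rangle\vee}\bigr)\Bigr) \otimes \Bigl(\bigotimes_{i=1}^m \wedge^{\ell_i}\bigl(p_1^*F_i^\vee \otimes \reg_p(-1)\bigr)\Bigr),
\qquad 0 \le \ell_i \le \min\{k_i, \rk F_i\},
\]
where I have used that the quotient term $p_1^*F_i^\vee\otimes\reg_p(-1)$ has rank $\rk F_i$, so $\min\{k_i,\rk F_i''\} = \min\{k_i,\rk F_i\}$. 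Since $\wedge^{\ell_i}(p_1^*F_i^\vee\otimes\reg_p(-1)) \cong p_1^*(\wedge^{\ell_i}F_i^\vee)\otimes \reg_p(-\ell_i)$, each filtration factor is (pulled back from $C\times\Flag_{d-1}$) tensored with $\reg_p(-L)$ where $L := \sum_{i=1}^m \ell_i$. By the projection formula, $Rp_*$ of such a factor is $\bigl(\text{stuff on } C\times\Flag_{d-1}\bigr) \otimes Rp_*\reg_p(-L)$. Now $0 \le L \le \sum_i \min\{k_i,\rk F_i\} < \rk E$, so $-\rk E < -L \le 0$. By \autoref{lem:nupush}(i) and (iii), $Rp_*\reg_p(-L) \cong 0$ whenever $0 < L < \rk E$, and $Rp_*\reg_p(0) \cong \reg_{C\times\Flag_{d-1}}$ when $L = 0$. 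Hence all factors with $L \geq 1$ push forward to $0$, and the only surviving factor is the one with all $\ell_i = 0$, namely $\bigotimes_i \wedge^{k_i}(p_2^*F_i^{\langle d-1\rangle\vee}) \cong p_2^*\bigl(\bigotimes_i \wedge^{k_i}F_i^{\langle d-1\rangle\vee}\bigr)$.

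Assembling the filtration via the long exact sequences (or, cleanly, by induction on the length of the filtration using that $Rp_*$ is triangulated), we conclude
\[
Rp_*\Bigl(\bigotimes_{i=1}^m \wedge^{k_i}F_i^{\langle d\rangle\vee}\Bigr) \cong p_2^*\Bigl(\bigotimes_{i=1}^m \wedge^{k_i}F_i^{\langle d-1\rangle\vee}\Bigr)\,,
\]
where now $p_2\colon C\times\Flag_{d-1}\to\Flag_{d-1}$ is the second projection. Applying $R(\id_C\times\nu_{d-1})_*$ and using $\nu_d = (\id_C\times\nu_{d-1})\circ p$, together with $R(\id_C\times\nu_{d-1})_*\circ p_2^* \cong \pr_2^*\circ R\nu_{d-1,*}$ (flat base change along the square with projections $\pr_1,\pr_2$ on $C\times C^{d-1}$), gives
\[
R\nu_{d*}\Bigl(\bigotimes_{i=1}^m \wedge^{k_i}F_i^{\langle d\rangle\vee}\Bigr) \cong \pr_2^*\, R\nu_{d-1,*}\Bigl(\bigotimes_{i=1}^m \wedge^{k_i}F_i^{\langle d-1\rangle\vee}\Bigr)\,.
\]
But the indices $k_i$ and the bundles $F_i$ are unchanged, and the hypothesis $\sum_i \min\{k_i,\rk F_i\} < \rk E$ still holds; the only subtlety is that for $\Flag_{d-1}$ we need $k_i \le (d-1)\rk F_i$, which may fail. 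If it fails for some $i$, then $\wedge^{k_i}F_i^{\langle d-1\rangle\vee} = 0$ trivially (wedge power exceeding the rank), so the whole tensor product vanishes and there is nothing to prove; otherwise the inductive hypothesis applies and gives $0$. Either way the right-hand side is $0$, completing the induction.

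**Main obstacle.** The delicate point is the bookkeeping in applying \autoref{lem:filtration}: one must be careful that the relevant rank is $\min\{k_i,\rk F_i\}$ (the rank of the \emph{quotient} in \eqref{eq:tautbundlesesdual}, which is $\rk F_i$, not $d\cdot\rk F_i$ or $(d-1)\cdot\rk F_i$), since this is exactly what makes the hypothesis $\sum_i\min\{k_i,\rk F_i\}<\rk E$ the right bound and keeps $L < \rk E$ so that \autoref{lem:nupush}(i) fires. A secondary technical point is verifying that a filtration all of whose $Rp_*$-images vanish except one has $Rp_*$ equal to that surviving piece — this is a routine triangulated-category argument (induct on filtration length, using that $Rp_*$ sends a triangle to a triangle and that $Rp_*$ of two of the three terms being known determines the third), but it should be spelled out, and one should check the surviving factor indeed sits as the correct sub- or quotient object in the filtration so that no connecting maps obstruct the identification.
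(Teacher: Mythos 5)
Your proposal is correct and follows essentially the same route as the paper: induction on $d$, the filtration from \autoref{lem:filtration} applied to the dual sequences \eqref{eq:tautbundlesesdual}, vanishing of all factors with $\sum_i\ell_i\ge 1$ via \autoref{lem:nupush}(i) and the projection formula (using $\sum_i\min\{k_i,\rk F_i\}<\rk E$), and the induction hypothesis for the $\vec\ell=\vec 0$ factor. The only cosmetic difference is that you first identify $Rp_*$ of the whole bundle with the surviving factor before pushing along $\id_C\times\nu_{d-1}$, whereas the paper simply notes that every filtration factor already has vanishing $R\nu_{d*}$, which makes your triangulated bookkeeping step unnecessary; your explicit remark about $k_i>(d-1)\rk F_i$ forcing the wedge power to vanish is a fine (implicit in the paper) touch.
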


\begin{proof}
We proof the assertion by induction on $d$. For $d=0$, the Quot scheme $\Flag_0=\pt$ is a point with universal quotient $\wcQ_0=0$. Hence, all the $F^{\langle 0\rangle}$ are already $0$, so are their push forwards along $\mu_0\colon \Flag_0=\pt\to C^{(0)}=\pt$. 

The reader who prefers to start the induction at $d=1$, may of course feel free to do so. In this case $\nu_1\colon \Flag_1=\Quot_1=\IP(E)\to C$ is a $\IP^{\rk E-1}$-bundle and $F^{\langle 1\rangle}\cong \nu_1^* F\otimes \reg(1)$, so the proof is still easy.

Let us now do the induction step.
Applying \autoref{lem:filtration} to the short exact sequences of the form \eqref{eq:tautbundlesesdual}, we see that $\bigotimes_{i=1}^m\wedge^{k_i} F_i^{\langle d\rangle\vee}$ has a filtration whose factors are indexed by $\vec \ell=(\ell_1,\dots, \ell_m)$ with $0\le \ell_i\le \min\{k_i,\rk(F_i)\}$ and
given by
\begin{align*}
G_{\vec \ell}&\cong \left(\bigotimes_{i=1}^m\wedge^{k_i-\ell_i} p_2^*F_i^{\langle d-1\rangle \vee}  \right)\otimes \left(\bigotimes_{i=1}^m \wedge^{\ell_i} \bigl(p_1^*F_i^\vee\otimes \reg_p(-1)\bigr)    \right)\\&\cong p^*\left(\Bigl( \bigotimes_{i=1}^m \wedge^{\ell_i} F_i^\vee   \Bigr) \boxtimes \Bigl( \bigotimes_{i=1}^m\wedge^{k_i-\ell_i} F_i^{\langle d-1\rangle \vee}  \Bigr)\right)\otimes \reg_p(-\sum_{i=1}^m \ell_i)\,.
\end{align*}
By assumption $-\rk E\le -\sum_{i=1}^m \ell_i$. Hence, $Rp_* G_{\vec \ell}$ vanishes by the projection formula and \autoref{lem:nupush}(i) for all $\vec \ell\neq \vec 0$. Due to $\nu_d=(\id\times \nu_{d-1})\circ p$, then also
$R\nu_{d*} G_{\vec \ell}\cong 0$.
In the remaining case $\vec\ell=\vec 0$, the push-forward
\[
R\nu_{d*} G_{\vec \ell}\cong R(\id\times \nu_{d-1})_* \Bigl(\reg_C \boxtimes \bigl( \bigotimes_{i=1}^m\wedge^{k_i} F_i^{\langle d-1\rangle \vee}  \bigr) \Bigr)\cong \reg_C\boxtimes\Bigl( R \nu_{d-1*} \bigl( \bigotimes_{i=1}^m\wedge^{k_i} F_i^{\langle d-1\rangle \vee}  \bigr)\Bigr)
\]
vanishes by the induction hypothesis.
As the derived push-forwards of all the factors $G_{\vec \ell}$ vanish, so does the derived push-forward of $\bigotimes_{i=1}^m\wedge^{k_i} F_i^{\langle d\rangle\vee}$.
\end{proof}

\begin{theorem}\label{thm:dualvanish}
Let $F_1,\dots, F_m$ be vector bundles on $C$, and let $1\le k_i\le d\cdot \rk F_i=\rk F_i^{\llb d \rrb}$ such that
$\sum_{i=1}^m\min\{k_i, \rk F_i\}<\rk E$.
Then, for all $d\in \IN$,
\[
 R\mu_*\left(\bigotimes_{i=1}^m\wedge^{k_i} F_i^{\llb d\rrb\vee}\right)\cong 0\,.
\]
\end{theorem}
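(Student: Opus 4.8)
The plan is to deduce \autoref{thm:dualvanish} from \autoref{prop:nudualvanish} by pulling the whole question back along the Flag--Chow morphism, exactly as advertised in the introduction and set up in \autoref{sect:McKay}. The key point is that \autoref{prop:McKay} gives an isomorphism of functors $\pi^*\circ R\mu_*\cong R\nu_*\circ Lf^*$, and that $\pi\colon C^d\to C^{(d)}$ is faithfully flat (\autoref{lem:piflat}), so $\pi^*$ reflects vanishing: a complex $K\in\D(C^{(d)})$ is zero if and only if $\pi^*K\cong 0$.

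Concretely, I would argue as follows. First recall that $F^{\langle d\rangle}=f_d^*F^{\llb d\rrb}$, and since $f_d$ is a morphism of varieties (so $Lf_d^*$ is a symmetric monoidal exact functor that commutes with taking duals and wedge powers of vector bundles), we have
\[
 Lf_d^*\left(\bigotimes_{i=1}^m\wedge^{k_i} F_i^{\llb d\rrb\vee}\right)\cong \bigotimes_{i=1}^m\wedge^{k_i} F_i^{\langle d\rangle\vee}\,.
\]
Applying \autoref{prop:McKay} to the object $\bigotimes_{i=1}^m\wedge^{k_i} F_i^{\llb d\rrb\vee}\in\D(\Quot_d)$ therefore gives
\[
 \pi^*\, R\mu_*\left(\bigotimes_{i=1}^m\wedge^{k_i} F_i^{\llb d\rrb\vee}\right)\cong R\nu_*\left(\bigotimes_{i=1}^m\wedge^{k_i} F_i^{\langle d\rangle\vee}\right)\cong 0\,,
\]
where the last vanishing is precisely \autoref{prop:nudualvanish}, whose numerical hypothesis $\sum_{i=1}^m\min\{k_i,\rk F_i\}<\rk E$ is the same as the one assumed here. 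Finally, since $\pi$ is flat and surjective, $\pi^*$ is faithful and reflects the zero object (e.g.\ by checking stalks, or because $\pi_*\reg_{C^d}$ contains $\reg_{C^{(d)}}$ as a direct summand), so $R\mu_*\bigl(\bigotimes_{i=1}^m\wedge^{k_i} F_i^{\llb d\rrb\vee}\bigr)\cong 0$, which is the claim; the statement \eqref{eq:cohdualformula} on cohomology then follows by pushing further to a point.

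There is essentially no obstacle in this final step — all the real work has already been done in \autoref{prop:nudualvanish} (the induction using the short exact sequences \eqref{eq:tautbundlesesdual} and \autoref{lem:nupush}(i)) and in establishing \autoref{prop:McKay} (which rests on \autoref{lem:gpush}/\autoref{cor:gfaith}). The only minor point worth spelling out is the compatibility of $Lf_d^*$ with $\wedge^{k_i}(-)^\vee$, which is immediate because these are vector bundles and $f_d$ is flat, so $Lf_d^*$ is just ordinary pull-back; and the elementary fact that flat surjective pull-back detects vanishing in the derived category.
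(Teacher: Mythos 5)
Your proposal is correct and follows essentially the same route as the paper: apply \autoref{prop:McKay} to identify $\pi^*R\mu_*$ of the product of duals with $R\nu_*$ of its pull-back to $\Flag_d$, invoke \autoref{prop:nudualvanish}, and conclude by faithful flatness of $\pi$ (\autoref{lem:piflat}). Your extra remark on the compatibility of $Lf_d^*$ with wedge powers and duals is a harmless elaboration of what the paper leaves implicit.
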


\begin{proof}
By \autoref{prop:McKay}, we have
\[
\pi^*R\mu_*\left(\bigotimes_{i=1}^m\wedge^{k_i} F_i^{\llb d\rrb\vee}\right)\cong  R\nu_*\left(\bigotimes_{i=1}^m\wedge^{k_i} F_i^{\langle d\rangle}\right)\,,   
\]
which vanishes by \autoref{prop:nudualvanish}. As $\pi$ is faithfully flat (see \autoref{lem:piflat}), it follows that 
$R\mu_*\bigl(\bigotimes_{i=1}^m\wedge^{k_i} F_i^{\llb d\rrb\vee}\bigr)$ already vanishes. 
\end{proof}

\begin{remark}\label{rem:numcondition}
Inspecting the proof of \autoref{prop:nudualvanish}, it looks likely that the numerical condition 
\[
 \sum_{i=1}^m\min\{k_i, \rk F_i\}<\rk E
\]
for the vanishing of the derived push-forward is already sharp.
For example, let $m=1$, and also $d=1$ in which case $\nu_1=\mu_1=p\colon \IP(E)\to C$, and $F^{\llb 1\rrb}\cong p^*F\otimes \reg_p(1)$. Hence, $\wedge^k F^{\llb 1\rrb\vee}\cong p^*(\wedge^k F^\vee)\otimes \reg_p(-k)$. For $\rk F\ge k\ge \rk E$, this gives (see e.g.\ \cite[(A.2b)]{Lazarsfeld--posbookI})
\[
 R\mu_* (\wedge^k F^{\llb 1\rrb\vee})\cong (\wedge^k F^\vee) \otimes (S^{k-\rk E} E) [-\rk E +1]\not\cong 0\,. 
\]
\end{remark}

\section{Fourier--Mukai computations}\label{sect:FM}

\subsection{Fourier--Mukai functors}\label{subsect:FM}

In this section, we will collect some facts on Fourier--Mukai functors that we will need later in \autoref{subsect:Ext}. The standard reference is \cite{Huy}. 

\begin{convention}\label{conv:derived}
From now on, in order to lighten the notation, every functor will tacitly be understood to stand for its derived version on the level of derived categories: $f_*$ stands for $Rf_*$, $f^*$ stands for $Lf^*$, $\otimes$ stands for $\otimes^L$, $\sHom(F,\_)$ stands for $R\sHom(F,\_)$, and the dual $(\_)^\vee$ stands for the derived dual $R\sHom(\_,\reg)$. This convention is quite standard for doing computations with Fourier--Mukai functors. 
\end{convention}

Given two smooth projective varieties $X$ and $Y$, and $\cP\in \D(X\times Y)$, the \emph{Fourier--Mukai transform}
along $\cP$ is
\[
 \FM_{\cP}:=\pr_{2*}\bigl(\pr_1^*(\_)\otimes \cP\bigr)\colon \D(X)\to \D(Y)\,.
\]
The most important examples for us are 
\[
\FM_{\cQ_d}\colon \D(C)\to \D(\Quot_d)\quad,\quad  \FM_{\wcQ_d}\colon \D(C)\to \D(\Flag_d)
\]
where $\cQ_d$ and $\wcQ_d$ are the universal quotient sheaves. For $F\in \VB(C)$, we have
\begin{equation}\label{eq:tautFM}
F^{\llb d\rrb} \cong \FM_{\cQ_d}(F)\quad,\quad F^{\langle d\rangle} \cong \FM_{\wcQ_d}(F)\,.
\end{equation}
Given $\cP$ as above, we also have the Fourier--Mukai transform along by $\cP$ in the reverse direction
\[
 \FM_{\cP}^{Y\to X}:=\pr_{1*}\bigl(\pr_{2*}(\_)\otimes \cP\bigr)\colon \D(Y)\to \D(X)\,.
\]
If we want to emphasise the difference, we denote the Fourier--Mukai transform in the standard direction also by $\FM^{X\to Y}_\cP:=\FM_\cP$.
We set
\begin{equation}\label{eq:PR}
 \cP^R:=\cP^\vee \otimes \pr_1^*\omega_X[\dim X]
\end{equation}

\begin{lemma}\label{lem:RF}
Let $\cP\in \D(X\times Y)$ and $\Phi:=\FM_\cP\colon \D(X)\to \D(Y)$ as above.
\begin{enumerate}
 \item The right-adjoint of $\Phi$ is given by
 \[
  \Phi^R:= \FM_{\cP^R}^{Y\to X}\colon \D(Y)\to \D(X)\,.
 \]
\item Recall that $\pr_{32}\colon X\times Y\times X\to X\times Y$ is the projection to the last two factors, followed by interchanging the factors, which means $\pr_{32}(a,b,c)=(c,b)$. We have  
\[
\Phi^R\circ \Phi\cong \FM_{\cP\star \cP^R}\colon \D(X)\to \D(X)\quad\text{with}\quad \cP\star \cP^R:=\pr_{13}(\pr_{12}^*\cP\otimes \pr_{32}^*\cP)\,.
\]
 
\end{enumerate}
\end{lemma}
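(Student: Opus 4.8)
Both parts are standard facts about Fourier--Mukai functors, so one option is simply to cite \cite[Ch.~5]{Huy}; but let me sketch the arguments one would give.

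For (i), the plan is to chase adjunctions. For $A\in\D(X)$ and $B\in\D(Y)$ I would compute
\[
\Hom_{\D(Y)}\bigl(\FM_\cP(A),B\bigr)\cong\Hom_{X\times Y}\bigl(\pr_1^*A\otimes\cP,\pr_2^!B\bigr)
\]
using the adjunction $(\pr_{2*},\pr_2^!)$, valid since $\pr_2$ is proper. As $\pr_2\colon X\times Y\to Y$ is smooth with relative dualizing complex $\pr_1^*\omega_X[\dim X]$, the right-hand side is $\Hom_{X\times Y}\bigl(\pr_1^*A\otimes\cP,\pr_2^*B\otimes\pr_1^*\omega_X[\dim X]\bigr)$. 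Next I would move $\cP$ across the $\Hom$ via the tensor--Hom adjunction, using that $\cP$ is perfect (automatic on the smooth variety $X\times Y$), and then apply $(\pr_1^*,\pr_{1*})$; the outcome is
\[
\Hom_{\D(X)}\Bigl(A,\;\pr_{1*}\bigl(\pr_2^*B\otimes\cP^\vee\otimes\pr_1^*\omega_X[\dim X]\bigr)\Bigr)=\Hom_{\D(X)}\bigl(A,\FM^{Y\to X}_{\cP^R}(B)\bigr)
\]
by the definition \eqref{eq:PR} of $\cP^R$. Since every step is functorial in $A$ and $B$, the Yoneda lemma identifies $\Phi^R$ with $\FM^{Y\to X}_{\cP^R}$.

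For (ii), the plan is to combine (i) with the composition law for Fourier--Mukai kernels (see \cite[Prop.~5.10]{Huy}): the composite $\FM_{\cK_2}\circ\FM_{\cK_1}$ of transforms with kernels $\cK_1\in\D(X\times Y)$ and $\cK_2\in\D(Y\times Z)$ has kernel $\pr_{13*}\bigl(\pr_{12}^*\cK_1\otimes\pr_{23}^*\cK_2\bigr)$ on $X\times Y\times Z$. I would apply this with $\cK_1=\cP$ and with $Z$ a second copy of $X$, the target of $\Phi^R=\FM^{Y\to X}_{\cP^R}$. The one thing to watch is that the kernel $\cP^R$ of $\Phi^R$ as written lives in $\D(X\times Y)$, so one first reindexes it to $\D(Z\times Y)$ before feeding it into the composition law; the pullback of this reindexed kernel to the triple product $X\times Y\times X$ is exactly $\pr_{32}^*\cP^R$, which is precisely why the projection $\pr_{32}$ appears in the statement. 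This yields $\Phi^R\circ\Phi\cong\FM_{\cP\star\cP^R}$ with the convolution kernel $\cP\star\cP^R=\pr_{13*}\bigl(\pr_{12}^*\cP\otimes\pr_{32}^*\cP^R\bigr)$, as claimed.

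I expect no real obstacle in (i) beyond correctly identifying the relative dualizing complex $\pr_2^!\reg_Y\cong\pr_1^*\omega_X[\dim X]$ of a projection out of a product. The only mildly delicate point overall is the factor bookkeeping in (ii) --- keeping the source and target copies of $X$ distinct and tracking how the ``reversed'' transform $\FM^{Y\to X}_{\cP^R}$ matches the standard composition formula after reordering factors --- but this is routine once one is careful with the $\pr_{ij}$ conventions fixed in the introduction.
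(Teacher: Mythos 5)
Your proposal is correct and matches the paper's approach: the paper proves (i) by citing \cite[Prop.~5.9]{Huy} and (ii) by combining (i) with the convolution formula of \cite[Prop.~5.10]{Huy}, exactly as you do, with your Grothendieck--Verdier adjunction chase being precisely the standard proof behind the cited Prop.~5.9. Your bookkeeping in (ii) (reindexing $\cP^R$ to the second copy of $X$, which produces the $\pr_{32}^*$ pullback) is also the intended reading of the statement.
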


\begin{proof}
 Part (i) is \cite[Prop.\ 5.9]{Huy}. Part (ii) is part (i) combined with the fact that compositions of Fourier--Mukai functors are given by Fourier--Mukai functors along the convolution product; see \cite[Prop.\ 5.10]{Huy}.
\end{proof}
It turns out that the Fourier--Mukai functor $\FM_{\cP^R}^{X\to Y}$ along $\cP^R$ in the other direction than $\Phi^R$ is of some use, too. 

\begin{lemma}\label{lem:PRdual}
Let $\Phi=\FM_\cP\colon \D(X)\to \D(Y)$ be a Fourier--Mukai functor. Then, for every $F\in \D(X)$, we have
\[
\FM_{\cP^R}^{X\to Y}(F^\vee)\cong \Phi(F)^\vee\,. 
\]
\end{lemma}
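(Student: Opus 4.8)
The plan is to unwind both sides as explicit Fourier--Mukai expressions and compare them directly, using only the definitions together with the duality compatibilities of derived pushforward, pullback and $R\sHom$ on smooth projective varieties. Recall $\cP^R=\cP^\vee\otimes\pr_1^*\omega_X[\dim X]$ (see \eqref{eq:PR}), and that by convention every functor is derived (\autoref{conv:derived}). First I would spell out the left-hand side: by definition
\[
\FM_{\cP^R}^{X\to Y}(F^\vee)=\pr_{2*}\bigl(\pr_1^*(F^\vee)\otimes\cP^R\bigr)=\pr_{2*}\bigl(\pr_1^*F^\vee\otimes\cP^\vee\otimes\pr_1^*\omega_X[\dim X]\bigr)\,.
\]
Then I would rewrite the right-hand side: $\Phi(F)^\vee=R\sHom\bigl(\pr_{2*}(\pr_1^*F\otimes\cP),\reg_Y\bigr)$.

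The key step is to move the dual inside the pushforward using Grothendieck--Serre duality for the smooth projective (hence proper, of relative dimension $\dim X$) morphism $\pr_2\colon X\times Y\to Y$, whose relative dualizing complex is $\pr_1^*\omega_X[\dim X]$. Concretely, for $\cG\in\D(X\times Y)$ one has
\[
R\sHom\bigl(\pr_{2*}\cG,\reg_Y\bigr)\cong \pr_{2*}R\sHom\bigl(\cG,\pr_1^*\omega_X[\dim X]\bigr)\cong \pr_{2*}\bigl(\cG^\vee\otimes\pr_1^*\omega_X[\dim X]\bigr)\,,
\]
the last isomorphism because $\cG$ is a perfect complex (all objects here are perfect on smooth varieties), so $R\sHom(\cG,-)\cong\cG^\vee\otimes(-)$. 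Applying this with $\cG=\pr_1^*F\otimes\cP$ and using $\cG^\vee\cong\pr_1^*F^\vee\otimes\cP^\vee$ (again because $\pr_1^*F$ and $\cP$ are perfect, and $\pr_1^*$ commutes with $(-)^\vee$) gives
\[
\Phi(F)^\vee\cong\pr_{2*}\bigl(\pr_1^*F^\vee\otimes\cP^\vee\otimes\pr_1^*\omega_X[\dim X]\bigr)\,,
\]
which is exactly the expression obtained for $\FM_{\cP^R}^{X\to Y}(F^\vee)$ above. Reordering the tensor factors (the tensor product is symmetric) completes the identification.

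I expect the only real subtlety to be the careful invocation of relative Grothendieck--Serre duality in the derived/perfect setting and the verification that all objects in sight are perfect complexes so that $R\sHom(-,\reg)$ genuinely behaves like the monoidal dual and commutes with the relevant pullbacks and pushforwards; since $X$ and $Y$ are smooth projective this is standard, so there is no genuine obstacle, just bookkeeping. One should also note that the functoriality of these isomorphisms in $F$ is automatic, so the statement holds as an isomorphism of functors if desired, though the pointwise version as stated suffices. If one prefers to avoid citing relative duality directly, an alternative is to pass through the composite $\Phi^R\circ(-)^\vee$ versus $(-)^\vee\circ\FM^{X\to Y}_{\cP^R}$ and use \autoref{lem:RF}(i), but the direct duality computation above is the cleanest route.
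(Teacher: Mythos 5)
Your proof is correct and is essentially the same as the paper's: both identify $\pr_1^*\omega_X[\dim X]$ with $\pr_2^!\reg_Y$, use perfectness to trade the twisted dual for an $R\sHom$ into the relative dualizing complex, and conclude by Grothendieck--Verdier duality along $\pr_2$. The only difference is cosmetic (you read the chain of isomorphisms from the right-hand side, the paper from the left).
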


\begin{proof}
 This is an application of the Grothendieck--Verdier duality for  $\pr_2\colon X\times Y\to Y$.
 Indeed, note that \eqref{eq:PR} can be rewritten as 
 $\cP^R:=\cP^\vee \otimes \pr_2^!\reg_Y$.
Hence, 
\begin{align*}
\FM_{\cP^R}^{X\to Y}(F^\vee)\cong \pr_{2*}\bigl(\pr_1^*F^\vee\otimes \cP^\vee \otimes  \pr_2^!\reg_Y  \bigr)  
&\cong \pr_{2*}\sHom(\pr_1^*F\otimes \cP , \pr_2^!\reg_Y  )\\&\cong \sHom\bigl(\pr_{2*}(\pr_1^*F\otimes \cP) , \reg_Y   \bigr)\\&\cong \Phi(F)^\vee\,.\qedhere
\end{align*}
\end{proof}

\begin{lemma}\label{lem:FMhom}
 Let $\Phi=\FM_{\cQ}\colon \D(X_1)\to \D(Y)$ and $\Psi=\FM_{\cP}\colon \D(X_2)\to \D(Y)$ be Fourier--Mukai functors. Then, for every $G\in \D(X_1)$ and $F\in \D(X_2)$, we have
 \[
  \sHom\bigl(\Psi(F), \Phi(G)\bigr)\cong \pr_{2*}\bigl(\pr_1^*G\otimes \pr_{12}^*\cQ\otimes \pr_{32}^*\cP^R\otimes \pr_3^*F^\vee  \bigr)
 \]
where the $\pr$ denote the projections from $X_1\times Y\times X_2$ to its factors.
\end{lemma}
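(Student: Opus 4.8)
The plan is to unwind both Fourier--Mukai transforms on a triple product and repeatedly use Grothendieck--Verdier duality together with flat base change, in the same spirit as the proof of \autoref{lem:PRdual}. First I would set up the notation: write $\pr_i$ and $\pr_{ij}$ for the projections from $X_1\times Y\times X_2$, and note that the two Fourier--Mukai functors in question factor through the two relevant two-factor products. Concretely, $\Phi(G)=\pr_{2*}(\pr_1^*G\otimes \cQ)$ is computed on $X_1\times Y$ and $\Psi(F)=\pr_{2*}(\pr_1^*F\otimes \cP)$ on $X_2\times Y$; the aim is to compare $\sHom_Y(\Psi(F),\Phi(G))$ with a push-forward from the triple product.

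The key computation is to rewrite $\sHom$ on $Y$ as a push-forward. Using the adjunction/duality argument, $\sHom\bigl(\Psi(F),\Phi(G)\bigr)\cong \sHom\bigl(\Psi(F),\pr_{2*}(\pr_1^*G\otimes \cQ)\bigr)$, and by the projection formula and Grothendieck--Verdier duality for the projection $X_1\times Y\to Y$ this becomes $\pr_{2*}\sHom\bigl(\pr_2^*\Psi(F), \pr_1^*G\otimes\cQ\otimes\pr_2^!\reg_Y\bigr)$, i.e.\ $\pr_{2*}\bigl(\pr_1^*G\otimes\cQ\otimes\pr_2^*\Psi(F)^\vee\otimes\pr_2^!\reg_Y\bigr)$ on $X_1\times Y$. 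The remaining job is to identify $\pr_2^*\Psi(F)^\vee$ pulled back to the triple product. By \autoref{lem:PRdual} applied to $\Psi$, we have $\Psi(F)^\vee\cong \FM_{\cP^R}^{X_2\to Y}(F^\vee)=\pr_{2*}(\pr_1^*F^\vee\otimes\cP^R)$; pulling this back along the projection $X_1\times Y\times X_2\to X_2\times Y$ and applying flat base change (all projections between smooth projective varieties are flat, so base change is automatic) turns $\pr_2^*\Psi(F)^\vee$ into $\pr_{12*}$ of $\pr_3^*F^\vee\otimes \pr_{32}^*\cP^R$ on the triple product. Substituting this back, absorbing the relative dualizing complex $\pr_2^!\reg_Y$ correctly (it matches the twist already built into $\cP^R$ via \eqref{eq:PR}), and using the projection formula once more to push everything up to $X_1\times Y\times X_2$ and then down via $\pr_2$, one arrives at
\[
\sHom\bigl(\Psi(F),\Phi(G)\bigr)\cong \pr_{2*}\bigl(\pr_1^*G\otimes\pr_{12}^*\cQ\otimes\pr_{32}^*\cP^R\otimes\pr_3^*F^\vee\bigr),
\]
which is the claim.

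The main obstacle I expect is purely bookkeeping: making sure the dualizing/shift factors from the two applications of Grothendieck--Verdier duality (one for $\Phi$, implicitly one for $\Psi$ via \autoref{lem:PRdual}) are not double-counted, and that the twist by $\pr_1^*\omega_{X}[\dim X]$ in the definition \eqref{eq:PR} of $(\_)^R$ is attributed to exactly the right factor on the triple product. A clean way to avoid errors is to phrase everything in terms of $\pr^!$ rather than $\omega[\dim]$, exactly as in the proof of \autoref{lem:PRdual}, and to use the compatibility $\pr_{12}^*\circ\pr_2^!\cong \pr_2^!$ (for the relevant projections) so that the single copy of $\pr_2^!\reg_Y$ on the triple product is all that remains; this is precisely what gets combined with $\cQ$ and $\cP^\vee$ to produce the stated integrand. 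No genuinely hard geometric input is needed beyond the flatness of projections and standard six-functor formalism on smooth projective varieties.
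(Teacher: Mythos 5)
Your overall route is different from the paper's and is essentially viable: you reduce $\sHom(\Psi(F),\Phi(G))$ to a push-forward along $\pr_2\colon X_1\times Y\to Y$, identify $\Psi(F)^\vee$ via \autoref{lem:PRdual}, move to the triple product by flat base change, and finish with the projection formula. The paper instead forms the external product $\Phi(G)\boxtimes\Psi(F)^\vee$ on $Y\times Y$ (using the compatibility of Fourier--Mukai transforms with box products together with \autoref{lem:PRdual}) and then restricts along the diagonal $\delta\colon Y\to Y\times Y$, using base change $\delta^*\pr_{23*}\cong\pr_{2*}\delta'^*$ over the flat projection. Both arguments use the same two inputs (\autoref{lem:PRdual} and base change on the triple/quadruple product); yours stays on $X_1\times Y\times X_2$ throughout, the paper's passes through $X_1\times Y\times Y\times X_2$.

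However, one step of your plan is wrong as stated and would fail if carried out literally. For $\sHom\bigl(\Psi(F),\pr_{2*}(\pr_1^*G\otimes\cQ)\bigr)$ you do \emph{not} want Grothendieck--Verdier duality: GV duality computes $\sHom(\pr_{2*}A,B)$, i.e.\ Hom \emph{out of} a push-forward, and that is where a factor $\pr_2^!\reg_Y$ appears. Hom \emph{into} a push-forward is governed by the plain adjunction $\sHom(B,f_*A)\cong f_*\sHom(f^*B,A)$, which introduces no dualizing factor at all; the correct intermediate expression is $\pr_{2*}\bigl(\pr_1^*G\otimes\cQ\otimes\pr_2^*\Psi(F)^\vee\bigr)$ on $X_1\times Y$. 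Your proposed repair --- ``absorbing'' the spurious $\pr_2^!\reg_Y$ into $\cP^R$ --- cannot work: for $\pr_2\colon X_1\times Y\to Y$ one has $\pr_2^!\reg_Y\cong\pr_1^*\omega_{X_1}[\dim X_1]$, a twist by $\omega_{X_1}$, whereas the twist built into $\cP^R$ by \eqref{eq:PR} is by $\omega_{X_2}[\dim X_2]$ and is already fully accounted for by the single application of \autoref{lem:PRdual} to $\Psi$. Keeping the extra factor would make your final answer off by $\pr_1^*\omega_{X_1}[\dim X_1]$; discarding it by ``matching'' it with $\cP^R$ is double counting in disguise. Once you replace that step by the $f^*\dashv f_*$ sheaf-Hom adjunction (and use that $\Psi(F)$ is perfect, so $\sHom(\pr_2^*\Psi(F),-)\cong(-)\otimes\pr_2^*\Psi(F)^\vee$), the remainder of your plan --- flat base change giving $\pr_2^*\pr_{2*}(\pr_1^*F^\vee\otimes\cP^R)\cong\pr_{12*}(\pr_3^*F^\vee\otimes\pr_{32}^*\cP^R)$, then the projection formula along $\pr_{12}$ and $\pr_2\circ\pr_{12}=\pr_2$ --- does yield the stated formula.
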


\begin{proof}
We consider the commutative diagram with cartesian square
\[
 \xymatrix{   
X_1\times Y\times X_2\ar_{\delta'\quad}[r] \ar@/^6mm/^{\pr_{32}}[rr] \ar^{\pr_{2}}[d] & X_1\times Y\times Y\times X_2 \ar_{\qquad\pr_{43}}[r] \ar^{\pr_{23}}[d] & X_2\times Y  \\
Y \ar^{\delta}[r]& Y\times Y  & 
 }
\]
where $\delta\colon Y\hookrightarrow Y\times Y$ is the diagonal embedding, and $\delta'=\id\times \delta\times \id$.

First, the compatibility of Fourier--Mukai functors with box products (see \cite[Exe.\ 5.13(i)]{Huy}) together with \autoref{lem:PRdual} gives in $\D(Y\times Y)$ the isomorphism 
\[
\Phi(G)\boxtimes\Bigl(\Psi(F)^\vee\Bigr)\cong \pr_{23*}\bigl(\pr_1^*G\otimes \pr_{12}^*\cQ\otimes \pr_{43}^*\cP^R\otimes \pr_4^*F^\vee  \bigr)\,. 
\]
Then, using flat base change along the above cartesian square, we get
\begin{align*}
\sHom\bigl(\Psi(F),\Phi(G)\bigr) &\cong \delta^*\left(\Phi(G)\boxtimes\Bigl(\Psi(F)^\vee\Bigr)\right)\\&\cong \delta^*\pr_{23*}\bigl(\pr_1^*G\otimes \pr_{12}^*\cQ\otimes \pr_{43}^*\cP^R\otimes \pr_4^*F^\vee  \bigr) 
\\&\cong \pr_{2*}\delta'^*\bigl(\pr_1^*G\otimes \pr_{12}^*\cQ\otimes \pr_{43}^*\cP^R\otimes \pr_4^*F^\vee  \bigr)
\\
&\cong \pr_{2*}\bigl(\pr_1^*G\otimes \pr_{12}^*\cQ\otimes \pr_{32}^*\cP^R\otimes \pr_3^*F^\vee  \bigr)\,. \qedhere
\end{align*}
\end{proof}

\autoref{lem:FMhom} together with \eqref{eq:tautFM} gives the following description of the Hom bundle between two tautological bundles, which we will use later in the proof of \autoref{thm:mainExt}.

\begin{cor}\label{cor:HomtautFM}
For every $d\in \IN$, and all $F,G\in \VB(C)$, we have
\[
 \sHom(F^{\llb d\rrb}, G^{\llb d\rrb})\cong \pr_{2*}\Bigl(\pr_1^*G\otimes \pr_{12}^*\cQ_d\otimes \pr_{32}^*\cQ_d^R\otimes \pr_3^* F^\vee  \Bigr)\,.  
\] 
\end{cor}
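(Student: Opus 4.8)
The plan is to read this off directly from \autoref{lem:FMhom}. I would apply that lemma with $X_1 = X_2 = C$, $Y = \Quot_d$, and with both Fourier--Mukai kernels taken to be the universal quotient sheaf, i.e.\ $\cQ := \cQ_d$ and $\cP := \cQ_d$, so that $\Phi = \Psi = \FM_{\cQ_d}\colon \D(C)\to \D(\Quot_d)$. By the defining property \eqref{eq:tautFM} of tautological bundles as Fourier--Mukai images, the input bundles $G, F\in\VB(C)$ then satisfy $\Phi(G)\cong G^{\llb d\rrb}$ and $\Psi(F)\cong F^{\llb d\rrb}$. Feeding this into the conclusion of \autoref{lem:FMhom} yields exactly
\[
\sHom\bigl(F^{\llb d\rrb}, G^{\llb d\rrb}\bigr)\cong \pr_{2*}\bigl(\pr_1^*G\otimes \pr_{12}^*\cQ_d\otimes \pr_{32}^*\cQ_d^R\otimes \pr_3^*F^\vee\bigr),
\]
where the projections are those from $C\times\Quot_d\times C$ onto its various factors, and $\cQ_d^R = \cQ_d^\vee\otimes\pr_1^*\omega_C[1]$ is the kernel associated to $\cQ_d$ via \eqref{eq:PR} with $X = C$.

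The only thing that needs a word of care is the bookkeeping of which $\pr$ denotes which projection, together with the (purely notational) observation that the symbol $\cQ_d^R$ in the statement is consistent with the meaning of $\cP^R$ from \eqref{eq:PR} as it appears inside \autoref{lem:FMhom}. There is no genuine obstacle: all of the substantive work --- Grothendieck--Verdier duality for $\pr_2$ (encoded in \autoref{lem:PRdual}) together with flat base change along the diagonal of $\Quot_d$ --- is already packaged into \autoref{lem:FMhom}, and under \autoref{conv:derived} every functor is taken in the derived sense, so no flatness or local-freeness hypotheses need to be invoked by hand. Hence the corollary follows immediately by specialisation.
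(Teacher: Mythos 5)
Your proposal is correct and is exactly the paper's argument: the corollary is obtained by specialising \autoref{lem:FMhom} to $X_1=X_2=C$, $Y=\Quot_d$, $\cQ=\cP=\cQ_d$ and invoking \eqref{eq:tautFM}. Nothing further is needed.
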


\subsection{Equivariant sheaves}\label{subsect:equi}

We only collect the few facts about equivariant sheaves, categories, and functors that we need in the following. For any further details, including full definitions of the basic notions, we refer to \cite[Sect.\ 4]{BKR}, \cite{Elagin}, \cite[Sect.\ 2.2]{Krug--remarksMcKay}, or \cite[Sect.\ 2.5]{Krug--curveExt},  to name a few of the possible references.

Let $G$ be a finite group acting on a smooth projective variety $X$. A \emph{$G$-linearisation} on a sheaf $\cF\in \Coh(X)$ is a collection of isomorphisms $\{\lambda_g\colon \cF\xrightarrow\sim g^*\cF\}_{g\in G}$ satisfying a cocycle condition. The sheaf $\cF$ equipped with a $G$-linearisation $\lambda$ is called a \emph{$G$-equivariant sheaf}. There is an abelian category $\Coh_G(X)$ of $G$-equivariant sheaves, and we denote its bounded derived category by $\D_G(X):=\D^b(\Coh_G(X))$.
Given a $G$-equivariant morphism $f\colon X\to Y$, we get induced equivariant push-forward and pull-back functors
\begin{align*}
 f_*\colon \Coh_G(X)\to \Coh_G(Y)\quad&,\quad f_*\colon \D_G(X)\to \D_G(Y)\,,\\ f^*\colon \Coh_G(Y)\to \Coh_G(X)\quad&,\quad f^*\colon \D_G(Y)\to \D_G(X)\,.
\end{align*}
If $G$ acts trivially on $Y$, a $G$-linearisation of $\cF\in \Coh(Y)$ is just a $G$-action $G\to \Aut(\cF)$. Hence, we have the functor $\triv\colon \Coh(Y)\to \Coh_G(Y)$, equipping every sheaf with the trivial $G$-action, and the functor $(\_)^G\colon \Coh_G(Y)\to \Coh(Y)$ sending every sheaf to its subsheaf of $G$-invariants. Both functors are exact, and we denote the induced functors on the derived categories again by $\triv\colon \D(Y)\to \D_G(Y)$ and $(\_)^G\colon \D_G(Y)\to \D(Y)$.
If $f\colon X\to Y$ is a $G$-invariant morphism (i.e.\ it is $G$-equivariant when we equip $Y$ with the trivial $G$-action), then we write
\[
 f^G_*:=(\_)^G\circ f_*\colon \D_G(X)\to \D(Y) \quad,\quad f_G^*:= f^*\circ \triv\colon \D(Y)\to \D_G(X)\,.
\]

\begin{lemma}\label{lem:pifaithful}
Let a finite group $G$ act on a smooth variety $X$.
Let $\pi\colon X \to Y=X/G$ be the geometric quotient, and assume that $Y$ is again smooth. 
Then we have an isomorphism
\[
 \pi_*^G\circ \pi^*_G\cong \id_{\D(Y)}\,,
\]
which means that $\pi^*_G\colon \D(Y)\to \D_G(X)$ is fully faithful.
\end{lemma}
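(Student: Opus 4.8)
The plan is to prove the projection formula identity $\pi_*^G \circ \pi_G^* \cong \id_{\D(Y)}$ by reducing it, via the ordinary (non-equivariant) projection formula, to the statement that $(\pi_* \reg_X)^G \cong \reg_Y$. So first I would observe that, since $\pi$ is finite (a geometric quotient by a finite group), the pushforward $\pi_*$ is exact, and for $\cG \in \D(Y)$ one has $\pi_*^G \pi_G^* \cG = (\pi_*(\pi^* \cG \otimes \reg_X))^G$. The equivariant projection formula along the $G$-invariant morphism $\pi$ gives $\pi_*(\pi^*\cG \otimes \reg_X) \cong \cG \otimes \pi_*\reg_X$ in $\D_G(Y)$, where $\cG$ carries the trivial linearisation and $\pi_*\reg_X$ carries its natural one. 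Taking $G$-invariants is exact and, since $\cG$ has the trivial action, commutes with $\cG \otimes (-)$; hence $(\pi_*(\pi^*\cG\otimes\reg_X))^G \cong \cG \otimes (\pi_*\reg_X)^G$.

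The remaining point is therefore purely a statement about the sheaf $\pi_*\reg_X$ with its $G$-linearisation: its subsheaf of invariants is $\reg_Y$. This is classical: $\pi\colon X \to Y = X/G$ is affine, so locally $Y = \Spec A$ with $A = B^G$ where $X = \Spec B$, and $(\pi_*\reg_X)^G$ corresponds to the $A$-module $B^G = A$. I would cite this as a standard fact about geometric quotients by finite groups (and note smoothness of $Y$ was assumed so that $\D(Y) = \D^b(\Coh Y)$ behaves well and the derived dual/adjunction machinery applies, though it is not strictly needed for this particular identity).

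Finally I would note that full faithfulness of $\pi_G^*$ is a formal consequence: for $\cG_1, \cG_2 \in \D(Y)$,
\[
\Hom_{\D_G(X)}(\pi_G^*\cG_1, \pi_G^*\cG_2) \cong \Hom_{\D(Y)}(\cG_1, \pi_*^G\pi_G^*\cG_2) \cong \Hom_{\D(Y)}(\cG_1, \cG_2),
\]
using that $\pi_*^G$ is right adjoint to $\pi_G^*$ (this adjunction being the composite of the $(\pi^*, \pi_*)$-adjunction in the equivariant categories with the $(\triv, (-)^G)$-adjunction) together with the isomorphism just established.

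The only genuine content — and hence the main obstacle if one wanted to be fully self-contained — is the identity $(\pi_*\reg_X)^G \cong \reg_Y$, i.e. that taking $G$-invariants of the structure sheaf recovers the quotient; but for a geometric quotient this is essentially the definition of $Y = X/G$ and can simply be quoted. Everything else is bookkeeping with adjunctions, exactness of $\pi_*$ and of $(-)^G$, and the equivariant projection formula.
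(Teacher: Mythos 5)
Your argument is correct and is essentially the paper's own proof: the paper also reduces the statement to the equivariant projection formula combined with the fact that the geometric quotient satisfies $\pi_*^G\reg_X\cong \reg_Y$ (citing \cite[Lem.\ 2.1]{Krug--remarksMcKay} for the details you spell out). Your additional bookkeeping — exactness of $\pi_*$ and $(\_)^G$, compatibility of invariants with tensoring by a trivially linearised object, and the adjunction argument for full faithfulness — is exactly the content hidden in that citation, so nothing is missing.
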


\begin{proof}
 This is the equivariant projection formula together with the fact that the geometric quotient satisfies $\pi_*^G\reg_X\cong \reg_Y$; see e.g.\ \cite[Lem.\ 2.1]{Krug--remarksMcKay} for details.
\end{proof}

We get the following criterion for an equivariant sheaf to descent to the quotient.

\begin{cor}\label{cor:descent}
An object $\cF\in \D_G(X)$ is of the form $\pi^*_G\cE$ for some $\cE\in \D(Y)$ if and only if 
$\cF\cong \pi^*_G\pi_*^G\cF$.
\end{cor}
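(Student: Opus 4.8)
This is a formal consequence of \autoref{lem:pifaithful}, and the plan is simply to treat the two implications in turn.

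For the ``if'' direction, suppose $\cF \cong \pi^*_G\pi_*^G\cF$. Then $\cE:=\pi_*^G\cF\in \D(Y)$ exhibits $\cF$ in the form $\pi^*_G\cE$, so there is nothing further to prove.

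For the ``only if'' direction, suppose $\cF\cong \pi^*_G\cE$ for some $\cE\in \D(Y)$. Applying the functor $\pi_*^G$ and invoking the isomorphism of functors $\pi_*^G\circ \pi^*_G\cong \id_{\D(Y)}$ from \autoref{lem:pifaithful}, we get $\pi_*^G\cF\cong \pi_*^G\pi^*_G\cE\cong \cE$. Applying $\pi^*_G$ to this chain of isomorphisms and using the hypothesis $\cF\cong \pi^*_G\cE$ once more yields $\pi^*_G\pi_*^G\cF\cong \pi^*_G\cE\cong \cF$, which is exactly the asserted characterisation.

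The argument uses nothing beyond \autoref{lem:pifaithful} (together with the fact that both conditions in the statement are existence statements about isomorphisms, so no compatibility with adjunction units is needed), so there is no real obstacle: all the content sits in \autoref{lem:pifaithful}, i.e.\ ultimately in the identity $\pi_*^G\reg_X\cong \reg_Y$ for the geometric quotient.
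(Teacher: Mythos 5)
Your proof is correct and is exactly the (implicit) argument the paper intends: the corollary is a formal consequence of the functor isomorphism $\pi_*^G\circ\pi^*_G\cong\id_{\D(Y)}$ from \autoref{lem:pifaithful}, applied in each direction just as you do. No gaps.
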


Let $d\ge 2$, let the symmetric group $\sym_d$ act on a smooth projective variety $X$, and let $Z\subset X$ be a $\sym_d$-invariant closed subscheme. Then there is a \emph{canonical} $\sym_d$-linearisation of $\reg_Z$ given by push-forward of regular functions along the group action:
 \[
  \lambda_g:=(g^\#)^{-1}\colon \reg_Z\to g^*\reg_Z\quad\text{for $g\in \sym_d$.}
 \]
In addition, there is the \emph{anti-canonical} linearisation of $\reg_Z$ given by 
 \[
  \overline \lambda_g:=\sgn(g)\lambda_g\colon \reg_Z\to g^*\reg_Z\quad\text{for $g\in \sym_d$.}
 \]

\begin{lemma}\label{lem:twolin}
Let $Z\subset X$ be a $\sym_d$-invariant closed subscheme. 

\begin{enumerate}
 \item If $Z$ is reduced and connected, then the canonical and the anti-canonical linearisation are the only two $\sym_d$-linearisations of $\reg_Z$. 
\item Let $\pi\colon X\to Y$ be a $\sym_d$-invariant morphism.
Assume that there is some point $z\in Z$ whose stabiliser $G_z$ contains some $g_z\in \sym_d$ with  
$\sgn(g_z)=-1$. Then $\reg_Z$ equipped with the anti-canonical linearisation does not descent along $\pi$, which means that there is no $\cE\in \D(Y)$ with $\pi_{\sym_d}^*\cE\cong (\reg_Z,\overline\lambda)$.
 \end{enumerate}
\end{lemma}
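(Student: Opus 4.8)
\textbf{Plan for the proof of Lemma~\ref{lem:twolin}.}

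For part (i), I would argue that any $\sym_d$-linearisation of $\reg_Z$ differs from the canonical one $\lambda$ by an automorphism of the \emph{equivariant} data, i.e.\ by a cocycle $c\colon \sym_d\to \Aut(\reg_Z)$. Since $Z$ is reduced and connected, $\Ho^0(Z,\reg_Z)\cong \IC$, so $\Aut(\reg_Z)\cong \IC^\times$. Hence the set of linearisations of $\reg_Z$ is a torsor under the group of $1$-cocycles $\Z^1(\sym_d,\IC^\times)$ with trivial action, which is just $\Hom(\sym_d,\IC^\times)$. Using that $\sym_d$ is generated by transpositions, all of which are conjugate, any homomorphism $\sym_d\to \IC^\times$ sends every transposition to the same value $\eps$ with $\eps^2=1$, so there are exactly two such homomorphisms: the trivial one and the sign character. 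Twisting $\lambda$ by these two characters gives precisely $\lambda$ and $\overline\lambda$, proving the claim.

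For part (ii), I would argue by contradiction: suppose $(\reg_Z,\overline\lambda)\cong \pi_{\sym_d}^*\cE$ for some $\cE\in \D(Y)$. Taking stalks (or restricting to a formal/\'etale neighbourhood) at the point $z\in Z$ with $\sgn(g_z)=-1$, $g_z\in G_z$, the pulled-back object $\pi_{\sym_d}^*\cE$ carries near $z$ a linearisation whose restriction to the stabiliser $G_z$ is induced from the pullback along a $G_z$-\emph{trivial} map (the local quotient map), hence acts trivially on the fibre/stalk at the fixed point $z$. On the other hand, the anti-canonical linearisation $\overline\lambda$ restricted to $G_z$ acts on the stalk $(\reg_Z)_z$ by $g\mapsto \sgn(g)\cdot(g^\#)^{-1}$; at the fixed point $z$ the functorial part $(g^\#)^{-1}$ acts trivially on the residue field, so $g_z$ acts on the fibre $\reg_Z\otimes k(z)$ by the scalar $\sgn(g_z)=-1\neq 1$. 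This contradicts triviality, so no such $\cE$ exists.

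The main obstacle is making the local argument in part (ii) precise: one must identify the linearisation of $\pi_{\sym_d}^*\cE$ at a point over $\pi(z)$ with the pullback linearisation and check that, on the stabiliser $G_z$, it acts trivially on the fibre because $\cE$ lives downstairs where $G_z$ acts trivially. Concretely I would restrict along the inclusion of the (reduced) point $\Spec k(z)\hookrightarrow Z$, which is $G_z$-equivariant, and compare the two $G_z$-equivariant structures on the one-dimensional space $\reg_Z\otimes k(z)$; the canonical/anti-canonical distinction survives this restriction precisely because of the sign, while any object pulled back from $Y$ restricts $G_z$-equivariantly-trivially. Part (i) is essentially formal once one records $\Aut(\reg_Z)=\IC^\times$ and the structure of $\Hom(\sym_d,\IC^\times)$.
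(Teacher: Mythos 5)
Your proposal is correct and follows essentially the same route as the paper: part (i) is the torsor argument over the character group $\widehat\sym_d\cong\IZ/2\IZ$ (the paper simply cites Ploog's Lemma~1 for this, whose proof you spell out using $\End(\reg_Z)\cong\IC$), and part (ii) is exactly the paper's argument of restricting along the $G_z$-equivariant inclusion of the fixed point $z$ and comparing the trivial $G_z$-action on $(i_z\circ\pi)^*\cE$ with the action of $g_z$ by $\sgn(g_z)=-1$ on the fibre of $(\reg_Z,\overline\lambda)$.
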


\begin{proof}
If $Z$ is connected and reduced, then $\End(\reg_Z)\cong \IC$. Hence, by \cite[Lem.\ 1]{Plo}, the set of linearisations of $\reg_Z$ is a $\widehat \sym_d$-torsor, where $\widehat \sym_d\cong \IZ/2\IZ$ is the group of characters of $\sym_d$.

For part (ii), assume for a contradiction that $(\reg_Z,\lambda)\cong \pi^*_{\sym_d} \cE$. Let $i_z\colon\{z\}\hookrightarrow X$ be the inclusion of the point, which is a $G_z$-equivariant morphism. Then, $(i_z\circ \pi)^*\cE\in \D_{G_z}(\{z\})$ is equipped with the trivial $G_z$ action, as $\cE$ is. On the other hand, the element $g_z$ acts on $\reg_z\cong \cH^0\bigl(i_z^*(\reg_Z,\overline \lambda)\bigr)\cong \cH^0\bigl((i_z\circ \pi)^*\cE\bigr)$ by $-1$.
\end{proof}

\subsection{A few subschemes related to the universal divisor $\Xi$}\label{subsect:Xi}

Recall that the universal family of effective degree $d$ divisors on $C$ is the reduced subscheme
\[
 \Xi_d=\bigl\{(x;x_1+\dots+x_d)\mid x\in \{x_1,\dots, x_d\}\bigr\}\subset C\times C^{(d)}\,.
\]
Equivalently, it is the image of the embedding \eqref{eq:Xiembedding}.
We define $D_d:=(\id\times \pi)^{-1}\Xi_d\subset C\times C^d$ as the scheme-theoretic pre-image under the quotient morphism $\id\times \pi\colon C\times C^d\to C\times C^{(d)}$. It turns out that $D_d$ is the reduced subscheme given by 
\[
 D_d=\bigl\{(x;x_1,\dots,x_d)\mid x\in \{x_1,\dots, x_d\}\bigr\}\subset C\times C^{d}\,;
\]
see \cite[Lem.\ 3.2]{Krug--curveExt}
We consider $\sym_d$ acting on $C\times C^d$ by the identity on the first factor $C$ and by permutation on the factor $C^d$. 
We see that the points of $\Xi_d$ correspond to the $\sym_d$-orbits of $D_d$. Furthermore, by the definition of $D_d$ as the scheme-theoretic preimage, we have $(\id\times \pi)^*\reg_{\Xi_d}\cong \reg_{D_d}$ as objects in $\D_{\sym_d}(C\times C^d)$. Hence, by \autoref{lem:pifaithful}, $(\id\times \pi)^{\sym_d}_*\reg_{\D_d}\cong \reg_{\Xi_d}$. In summary, $\Xi_d\cong D_d/\sym_d$ is the geometric quotient.

In \autoref{subsect:Ext}, we will consider the images of $D_d$ and $\Xi_d$ under diagonal embeddings
\begin{align*}
 \iota_d\colon C\times C^d\to C\times C^d\times C \quad &,\quad (x;x_1,\dots,x_n)\mapsto (x;x_1,\dots,x_n;x)\,,\\
 \iota_{(d)}\colon C\times C^{(d)}\to C\times C^{(d)}\times C \quad &,\quad (x;x_1+\dots+x_n)\mapsto (x;x_1+\dots+x_n;x)\,,
\end{align*}
which are the reduced subschemes
\begin{align*}
\Gamma_d:&=\iota_d(D_d)= \bigl\{(x;x_1,\dots,x_d;x)\mid x\in \{x_1,\dots, x_d\}\bigr\}\subset C\times C^{d}\times C\,,\\
\Gamma_{(d)}:&=\iota_{(d)}(\Xi_d)= \bigl\{(x;x_1+\dots+x_d;x)\mid x\in \{x_1,\dots, x_d\}\bigr\}\subset C\times C^{(d)}\times C\,. 
\end{align*}
Again, $\Gamma_{(d)}\cong \Gamma_d/\sym_d$ is the geometric quotient, where we consider $\sym_d$ acting by permutation of the inner factor $C^d$
of $C\times C^d\times C$. In particular,
\begin{equation}\label{eq:Gammainva}
 (\id\times \pi\times \id)_*^{\sym_d}\reg_{\Gamma_d}\cong \reg_{\Gamma_{(d)}}
\end{equation}
where $\id\times \pi\times \id\colon C\times C^d\times C \to C\times C^{(d)}\times C$ is the quotient morphism.

\subsection{Computation of the Extension Groups}\label{subsect:Ext}

In this subsection, we will prove \autoref{thm:mainExt} and \autoref{thm:mainfunctor}. As this is probably the most difficult part of this paper, let us give a short summary of what will happen.

Recall the notation of \autoref{sect:McKay}, in particular \eqref{eq:Flagdiag}.
We will use computations involving the Fourier--Mukai kernels $\cQ_d$ and $\cQ_d^R$, and their pull-backs $\wcQ_d$ and $\wcQ_d^R$ along the Flag--Quot morphism $f_d\colon \Flag_d\to \Quot_d$. 
We set
\[
 \alpha_d:=\id\times \nu_d\times \id\colon C\times \Flag_d\times C\to C\times C^d\times C\,.
\]
\autoref{prop:McKay} tells us that
\begin{equation}\label{eq:alphaconv}
\alpha_{d*}\bigl(\pr_{12}^*\wcQ_d\otimes \pr_{32}^*\wcQ_d^R\bigr) 
\end{equation}
contains much information about 
\begin{equation}\label{eq:muconv}
(\id\times \mu_d\times \id)_*\bigl(\pr_{12}^*\cQ_d\otimes \pr_{32}^*\cQ_d^R\bigr); 
\end{equation}
see \autoref{lem:convinva} for an exact statement about the relationship of
\eqref{eq:alphaconv} and \eqref{eq:muconv}. Hence, the first step is to provide a simple description of \eqref{eq:alphaconv}, which is what we do in
\autoref{prop:FMpush}. The proof of \autoref{prop:FMpush} by induction on $d$ will go over several pages and include the auxiliary \autoref{lem:adjointtriangle}, \autoref{lem:QRvanish}, \autoref{lem:maincomp},  \autoref{eq:unionses}, and \autoref{lem:RFsect}. 

Once the description for \eqref{eq:alphaconv} is established, we  use it to achieve a similar description for 
\eqref{eq:muconv} in \autoref{prop:FMSdC}. After we computed \eqref{eq:muconv}, it is quite easy to deduce \autoref{thm:mainExt} (see \autoref{thm:muHom}) using \autoref{cor:HomtautFM}, and to deduce \autoref{thm:mainfunctor} (see \autoref{thm:convolution}) using \autoref{lem:RF}. 

\begin{lemma}\label{lem:convinva}
The object $\alpha_{d*}\bigl(\pr_{12}^*\wcQ\otimes \pr_{32}^*\wcQ^R\bigr)\in \D(C\times C^d\times C)$ carries a $\sym_d$-linearisation such that, in $\D_{\sym_d}(C\times C^{d}\times C)$ and in $\D(C\times C^{(d)}\times C)$, respectively, we have isomorphisms
\begin{align}
&(\id\times \pi_d\times \id)^*_{\sym_d}(\id\times \mu_d \times \id)_*\bigl(\pr_{12}^*\cQ\otimes \pr_{32}^*\cQ^R\bigr)\cong \alpha_{d*}\bigl(\pr_{12}^*\wcQ\otimes \pr_{32}^*\wcQ^R\bigr)\,,\label{eq:FMpull}  \\
&(\id\times \mu_d \times \id)_*\bigl(\pr_{12}^*\cQ\otimes \pr_{32}^*\cQ^R\bigr)\cong (\id\times \pi_d\times \id)_*^{\sym_d}\alpha_{d*}\bigl(\pr_{12}^*\wcQ\otimes \pr_{32}^*\wcQ^R\bigr)
\label{eq:FMinva}\,.
\end{align}
\end{lemma}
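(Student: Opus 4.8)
The plan is to deduce both isomorphisms from the general equivariant descent machinery of \autoref{subsect:equi} applied to the base-change diagram \eqref{eq:Flagdiag}, enhanced by one extra copy of $C$. First I would set up the correct equivariant picture. The morphism $\nu_d\colon \Flag_d\to C^d$ is not $\sym_d$-equivariant in any obvious way, so \eqref{eq:FMpull}, \eqref{eq:FMinva} cannot be read off directly from \autoref{prop:McKay}. Instead, the right object to look at is the fibre product $Y_d=C^d\times_{C^{(d)}}\Quot_d$, on which $\sym_d$ acts via the first factor, with $Y_d/\sym_d\cong \Quot_d$ (since $\pi_d$ is faithfully flat and $\mu_d$ is flat, this quotient is geometric and $Y_d$ is reduced, being a flat base change of the reduced $C^d$). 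The morphism $g_d\colon \Flag_d\to Y_d$ of \eqref{eq:Flagdiag} then lets us transport computations between $\Flag_d$ and $Y_d$, and the point is that on $Y_d$ everything carries a natural $\sym_d$-linearisation.

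The key technical input is \autoref{cor:gfaith}, which gives $g_{d*}\circ g_d^*\cong\id$ on $\D(Y_d)$, together with \autoref{lem:pifaithful} applied to the quotient $\pi'_d\colon Y_d\to \Quot_d$, which gives $(\pi'_d)^{\sym_d}_*\circ (\pi'_d)^*_{\sym_d}\cong\id$ on $\D(\Quot_d)$. Adding a spectator factor $C$ on the right everywhere (so working over $C\times(-)\times C$ and using the morphisms $\id\times\pi_d\times\id$, $\id\times\mu_d\times\id$, $\id\times\nu_d\times\id=\alpha_d$, $\id\times g_d\times\id$, etc.), I would argue as follows. The object $\pr_{12}^*\cQ_d\otimes\pr_{32}^*\cQ_d^R$ on $C\times\Quot_d\times C$ pulls back, along $\id\times\pi'_d\times\id$, to a $\sym_d$-equivariant object on $C\times Y_d\times C$; pushing that forward along $\id\times\mu'_d\times\id\colon C\times Y_d\times C\to C\times C^d\times C$ (a flat, hence equivariant, morphism, $\sym_d$ acting trivially on the target since it acts on $Y_d$ only through $C^d$ — wait, it acts nontrivially on $C^d$, so the target carries the permutation action) yields a $\sym_d$-equivariant object on $C\times C^d\times C$. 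Flat base change along the cartesian square in \eqref{eq:Flagdiag} (tensored with $C$) identifies the underlying non-equivariant object with $(\id\times\pi_d\times\id)^*(\id\times\mu_d\times\id)_*(\pr_{12}^*\cQ_d\otimes\pr_{32}^*\cQ_d^R)$. Separately, pulling the $Y_d$-object back along $g_d$ and using that $\wcQ_d\cong(\id\times f_d)^*\cQ_d$ together with $f_d=\pi'_d\circ g_d$, $\nu_d=\mu'_d\circ g_d$, and \autoref{cor:gfaith} to push back down, identifies this same equivariant object with $\alpha_{d*}(\pr_{12}^*\wcQ_d\otimes\pr_{32}^*\wcQ_d^R)$ (one needs that $g_{d*}$ of the relevant pull-back is the original, which is \autoref{cor:gfaith} applied after a projection formula, using that $\cQ_d^R$ and hence $\wcQ_d^R$ is the pull-back of $\cQ_d^R$). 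This simultaneously defines the claimed $\sym_d$-linearisation on $\alpha_{d*}(\pr_{12}^*\wcQ_d\otimes\pr_{32}^*\wcQ_d^R)$ and proves \eqref{eq:FMpull}. Finally, \eqref{eq:FMinva} follows by applying $(\id\times\pi_d\times\id)^{\sym_d}_*$ to \eqref{eq:FMpull} and invoking \autoref{lem:pifaithful} (for the smooth quotient $C\times C^d\times C\to C\times C^{(d)}\times C$), which collapses $(\id\times\pi_d\times\id)^{\sym_d}_*(\id\times\pi_d\times\id)^*_{\sym_d}$ to the identity.

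The main obstacle I expect is bookkeeping the linearisations carefully: one must check that the isomorphism produced by flat base change in \eqref{eq:Flagdiag} is $\sym_d$-equivariant for the action on $C^d$ (equivalently, that the base-change natural transformation is compatible with the $\sym_d$-action permuting the fibres of $\pi_d$), and that pulling back along $g_d$ intertwines the linearisation coming from $Y_d$ with whatever linearisation one wants on the $\Flag_d$-side. Since $g_d$ itself is not $\sym_d$-equivariant (there is no $\sym_d$-action on $\Flag_d$), the cleanest route is to \emph{define} the linearisation on $\alpha_{d*}(\pr_{12}^*\wcQ_d\otimes\pr_{32}^*\wcQ_d^R)$ precisely as the one transported from $C\times Y_d\times C$ via the isomorphism furnished by \autoref{cor:gfaith}, rather than trying to exhibit it intrinsically — this sidesteps the non-equivariance of $g_d$ entirely and makes \eqref{eq:FMpull} almost a tautology once the two base-change identifications are in place. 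A secondary point to be careful about is that $\mu'_d$ (and $\pi'_d$) are flat, which is needed both for flat base change and for the equivariant push-forward to behave; flatness of $\mu'_d$ follows from flatness of $\mu_d$ (\autoref{prop:Quotbasicinfo}(ii)) by base change, exactly as in the proof of \autoref{lem:gpush}.
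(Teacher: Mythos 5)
Your proposal is correct and is essentially the paper's argument: the paper simply cites \autoref{prop:McKay} (whose proof is exactly your detour through $Y_d$, flat base change along \eqref{eq:Flagdiag} and \autoref{cor:gfaith}) with spectator factors $C$, uses $(\id\times f_d)^*\cQ_d\cong\wcQ_d$ and $(\id\times f_d)^*\cQ_d^R\cong\wcQ_d^R$ to get \eqref{eq:FMpull} with the linearisation transported from the canonically linearised pull-back side, and then applies $(\id\times\pi_d\times\id)_*^{\sym_d}$ and \autoref{lem:pifaithful} for the smooth quotient $C\times C^d\times C\to C\times C^{(d)}\times C$ to get \eqref{eq:FMinva}. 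Your side claims about $Y_d$ (that $Y_d/\sym_d\cong\Quot_d$, that $Y_d$ is reduced, and the application of \autoref{lem:pifaithful} to $\pi'_d\colon Y_d\to\Quot_d$, which is not literally covered by that lemma since $Y_d$ need not be smooth) are never actually used in your main chain of identifications, so they are harmless but should be dropped.
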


\begin{proof}
 Recall that $\alpha_d=\id\times \nu_d\times \id$. Hence, \autoref{prop:McKay} gives
\begin{equation}\label{eq:McKaybasechange}
(\id\times \pi_d\times \id)^*(\id\times \mu_d \times \id)_* \cong \alpha_{d*}(\id\times f_d\times \id)^*\,.
\end{equation}
Since $f_d\colon \Flag_d\to \Quot_d$ is the classifying morphism for $\wcQ_d$, we get $(\id\times f_d)^*\cQ_d\cong \wcQ_d$. As duals commute with pull-backs, and $(\id\times f_d)^*(\omega_C\boxtimes\reg_{\Quot_d})\cong \omega_C \boxtimes\reg_{\Flag_{d}}$, we also have 
$(\id\times f_d)^*(\cQ_d^R)\cong \wcQ_d^R$; see \eqref{eq:PR}. Hence, \eqref{eq:McKaybasechange} gives the isomorphism \eqref{eq:FMpull}.
Finally, composing both sides of \eqref{eq:FMpull} with $(\id\times \pi_d\times \id)_*^{\sym_d}$ and applying \autoref{lem:pifaithful} gives \eqref{eq:FMinva}.
\end{proof}

\begin{prop}\label{prop:FMpush} For every $d\in \IN$, we have
\[
 \alpha_{d*}\bigl(\pr_{12}^*\wcQ_d\otimes \pr_{32}^*\wcQ_d^R\bigr)\cong \reg_{\Gamma_d}\,.
\]
\end{prop}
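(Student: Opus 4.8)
The plan is to prove \autoref{prop:FMpush} by induction on $d$, using the iterated $\IP$-bundle description of $\Flag_d$ from \autoref{sect:McKay} and the short exact sequence \eqref{eq:sesinduction} together with its dual. For the base case $d=0$ both sides are trivially $\reg_{\pt}$ (and for those who want to start at $d=1$, on $\Flag_1=\IP(E)$ one has $\wcQ_1\cong i_{1*}\reg_p(1)$, so $\pr_{12}^*\wcQ_1\otimes\pr_{32}^*\wcQ_1^R$ is supported on the locus $\{(x;y;x')\mid y\in\IP(E_x),\,x=x'\}$ and a direct Fourier--Mukai computation along $p=\nu_1$ gives $\reg_{\Gamma_1}$). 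For the inductive step, I would express $\wcQ_d$ and $\wcQ_d^R$ in terms of $t^*\wcQ_{d-1}$, $i_*\reg_p(1)$ and their duals via \eqref{eq:sesinduction}, then decompose $\pr_{12}^*\wcQ_d\otimes\pr_{32}^*\wcQ_d^R$ using the resulting filtrations. This produces four "corner" terms:
\[
t^*\wcQ_{d-1}\text{-part}\otimes t^*\wcQ_{d-1}^R\text{-part},\quad i_*\reg_p(1)\text{-part}\otimes i_*\reg_p(-1)\text{-part},
\]
and the two mixed terms, each pulled back suitably to $C\times\Flag_d\times C$.

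The key computations are then: (a) the "diagonal" main term $\pr_{12}^*(t^*\wcQ_{d-1})\otimes\pr_{32}^*(t^*\wcQ_{d-1}^R)$ pushes forward along $\alpha_d=\alpha_{d-1}\circ(\text{structure map of the }\IP\text{-bundle})$ — here I would use that $\alpha_d$ factors through $\id\times p_2\times\id$ followed by $\alpha_{d-1}$, apply \autoref{lem:nupush}(iv) (namely $p_*p^*\cong\id$, suitably boxed with $C$) to collapse the projective bundle, and then invoke the induction hypothesis $\alpha_{d-1*}(\pr_{12}^*\wcQ_{d-1}\otimes\pr_{32}^*\wcQ_{d-1}^R)\cong\reg_{\Gamma_{d-1}}$; (b) the mixed terms, which should vanish after pushing forward along the $\IP$-bundle projection $p$ because they carry a twist $\reg_p(\pm1)$ that lands in the range $-\rk E<\ell<0$ covered by \autoref{lem:nupush}(i) — this is the place the hypothesis $\rk E\ge2$ enters; (c) the "small" term $\pr_{12}^*(i_*\reg_p(1))\otimes\pr_{32}^*((i_*\reg_p(1))^R)$ concentrated on the new diagonal point $x_d=x=x'$, which should push forward to the "new stratum" $\Gamma_d\setminus(\text{image of }\Gamma_{d-1})$-contribution. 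The upshot is an exact triangle relating $\alpha_{d*}(\pr_{12}^*\wcQ_d\otimes\pr_{32}^*\wcQ_d^R)$ to $\reg_{\Gamma_{d-1}\times_{C^{d-1}}C^{d}}$ (the part of $\Gamma_d$ coming from the first $d-1$ points) and the structure sheaf of the last-point diagonal, and I would identify the cone with $\reg_{\Gamma_d}$ by recognizing these as the two components of the "union along a point" short exact sequence $0\to\reg_{A\cup B}\to\reg_A\oplus\reg_B\to\reg_{A\cap B}\to0$ for $\Gamma_d$ — this is presumably the content of the \autoref{eq:unionses} referenced in the paragraph after the statement, together with a matching of the connecting maps.

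The main obstacle I expect is the bookkeeping in step (c) and in assembling the triangle: one must show not merely that the cone has the right cohomology sheaves, but that the maps in the triangle are exactly the restriction maps, so that the cone is genuinely $\reg_{\Gamma_d}$ and not some nontrivial extension or a twist of it. Concretely, the derived tensor product $\pr_{12}^*\wcQ_d\otimes^{\mathbf L}\pr_{32}^*\wcQ_d^R$ is not just the naive tensor product of filtration pieces — there are $\sTor$ contributions where the two divisors (the support of $\pr_{12}^*\wcQ_d$ in the first-and-second factors and of $\pr_{32}^*\wcQ_d^R$ in the third-and-second factors) meet non-transversally, i.e.\ along the locus $x=x'$. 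Controlling these $\sTor$-sheaves, and checking that after $\alpha_{d*}$ they conspire to give exactly $\reg_{\Gamma_d}$ with no shift, is the delicate heart of the argument; the auxiliary lemmas \autoref{lem:adjointtriangle}, \autoref{lem:QRvanish}, \autoref{lem:maincomp}, and \autoref{lem:RFsect} advertised in the excerpt are presumably each designed to dispatch one of these issues (an adjunction triangle for $\wcQ_d^R$, a vanishing of the "bad" $\sTor$/pushforward terms, the core computation of the surviving term, and a section/splitting statement identifying the connecting map). I would structure the write-up so that each of those lemmas isolates one such difficulty, keeping the induction step in \autoref{prop:FMpush} itself a clean assembly.
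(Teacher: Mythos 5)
Your skeleton coincides with the paper's: induction on $d$, the sequence \eqref{eq:sesinduction} and its dualised triangle (\autoref{lem:adjointtriangle}), a $3\times 3$ diagram of four ``corner'' convolutions, the union sequence for $\Gamma_d=A\cup T$, and a final identification of the connecting maps. But your step (b) is wrong, and the error is not cosmetic. Writing $\cS:=\alpha_{d*}\bigl(\pr_{12}^*\wcQ_d\otimes \pr_{32}^*\wcQ_d^R\bigr)$, only \emph{one} mixed term vanishes: the one carrying $\reg_p(-1)$, since $p_*\reg_p(-1)\cong 0$ by \autoref{lem:nupush}(i) (this is where $\rk E\ge 2$ enters). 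The other mixed term carries $\reg_p(+1)$, which does \emph{not} lie in the range $-\rk E<\ell<0$; instead $p_*\reg_p(1)\cong \wcK_{d-1}$ by \autoref{lem:nupush}(ii), and the paper computes this term to be $\reg_B[-1]$ with $B=A\cap T$, using $(\id\times\nu_{d-1})_*\wcQ_{d-1}^R\cong 0$ (\autoref{lem:QRvanish}), the sequence $0\to\wcK_{d-1}\to E\boxtimes\reg\to\wcQ_{d-1}\to 0$, and the induction hypothesis. This surviving term is exactly what produces the twist in $0\to\reg_A(-B)\to\reg_{\Gamma_d}\to\reg_T\to 0$: if both mixed terms vanished, $\cS$ would be an extension of $\reg_T$ by $\reg_A$, and no such extension is $\reg_{\Gamma_d}$, because the kernel of the restriction $\reg_{\Gamma_d}\to\reg_T$ is $\reg_A(-B)$, not $\reg_A$. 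So the computation you expected to be a routine vanishing is in fact one of the four essential corner evaluations.

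The second, larger gap is the one you flag yourself but do not resolve: pinning down the connecting maps so that the cone is genuinely $\reg_{\Gamma_d}$ rather than, say, $\reg_A(-B)\oplus\reg_T$ or an object with $\cH^1\cong\reg_B$. The paper does \emph{not} compute these maps directly; ``matching the connecting maps'' by hand is not obviously feasible. Its key idea is indirect: by \autoref{lem:convinva} (a consequence of \autoref{prop:McKay}), $\cS$ descends along the $\sym_d$-quotient $\id\times\pi\times\id\colon C\times C^d\times C\to C\times C^{(d)}\times C$; one then rules out the degenerate options for the maps $\reg_A\to\reg_B$ and $\reg_T\to\reg_B$ because the resulting cohomology sheaves (e.g.\ $\reg_B$, or a sheaf whose only global section is supported on $A$) admit no linearisation descending along $\pi$ (\autoref{lem:twolin}, \autoref{cor:descent}); finally, the nonzero global section of $\cS$ supplied by the unit of the adjunction $\id\to(\FM_{\wcQ_d})^R\circ\FM_{\wcQ_d}$ (\autoref{lem:RFsect}) is shown to factor through $\reg_{\Gamma_d}\to\cS$, and the five lemma finishes the step. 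Your proposal contains no substitute for this equivariance-plus-adjunction argument, so the induction step is incomplete as it stands. (By contrast, your worry about extra $\sTor$-sheaves is a non-issue: the only non-flat base change needed is along a transverse intersection of regular embeddings, where derived base change still applies by \cite[Cor.\ 2.27]{Kuz}, and the corner computations already account for all derived tensor contributions.)
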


We will do the proof of \autoref{prop:FMpush} by induction on $d$. Setting $\Gamma_0:=\emptyset\subset C\times C$, we can consider $d=0$ as the initial case, where the proof is trivial as $\wcQ_0=0$. Alternatively, the reader may do $d=1$ as the initial case, in which case the proof is not trivial, but easy.

As part of the induction step, we have to proof five technical lemmas. Note that in the proof of the third  
\autoref{lem:maincomp}, we will already use the induction hypothesis that \autoref{prop:FMpush} holds for $d-1$.

\begin{lemma}\label{lem:adjointtriangle}
In $\D(C\times \Flag_d)$, there is an exact triangle 
\begin{equation}\label{eq:FMinductionses}
t^*(\wcQ_{d-1}^R)\to \wcQ_d^R\to i_*\reg_p(-1)\to t^*(\wcQ_{d-1}^R)[1]\,.
\end{equation}
\end{lemma}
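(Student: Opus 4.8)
The plan is to derive the triangle \eqref{eq:FMinductionses} by applying the duality operation $(\_)^R = (\_)^\vee \otimes \pr_1^*\omega_C[\dim C] = (\_)^\vee\otimes \pr_1^*\omega_C[1]$ to the short exact sequence \eqref{eq:sesinduction}, namely
\[
0\to i_*\reg_p(1)\to \wcQ_d\to t^*\wcQ_{d-1}\to 0\,,
\]
which we already have on $C\times \Flag_d$. First I would recall from \autoref{conv:derived} that all functors are derived, so this short exact sequence becomes an exact triangle $i_*\reg_p(1)\to \wcQ_d\to t^*\wcQ_{d-1}\to i_*\reg_p(1)[1]$ in $\D(C\times\Flag_d)$. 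Applying the exact (contravariant) functor $(\_)^\vee$ and then twisting by the line bundle $\pr_1^*\omega_C[1]$ — which is an equivalence — reverses the arrows and produces an exact triangle
\[
(t^*\wcQ_{d-1})^R\to \wcQ_d^R\to (i_*\reg_p(1))^R\to (t^*\wcQ_{d-1})^R[1]\,.
\]
So the content reduces to two identifications: $(t^*\wcQ_{d-1})^R\cong t^*(\wcQ_{d-1}^R)$ and $(i_*\reg_p(1))^R\cong i_*\reg_p(-1)$.

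For the first identification, I would use that $t\colon C\times\Flag_d\to C\times\Flag_{d-1}$ is flat (as noted right before \eqref{eq:sesinduction}), so $t^*$ commutes with $R\sHom$ into $\reg$, i.e.\ $(t^*\wcQ_{d-1})^\vee\cong t^*(\wcQ_{d-1}^\vee)$; and since $t$ commutes with the first projection to $C$ (because $t=\id_C\times p_2$), we have $t^*(\pr_1^*\omega_C)\cong \pr_1^*\omega_C$, so the twist is preserved. Hence $(t^*\wcQ_{d-1})^R\cong t^*(\wcQ_{d-1}^R)$. For the second identification, the object $i_*\reg_p(1)$ is the push-forward along the regular closed embedding $i=(p_1,\id)\colon \Flag_d\hookrightarrow C\times\Flag_d$ of a line bundle; Grothendieck–Verdier duality for $i$ gives $(i_*\cL)^\vee\cong i_*(\cL^\vee\otimes\omega_i[-\codim i])$ where $\omega_i$ is the relative dualising bundle of $i$. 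The twist by $\pr_1^*\omega_C[1]$ is designed precisely so that, after noting $p_1 = \pr_1\circ i$ (from the commutative triangle in \eqref{eq:Flagtautdiag}) and computing $\omega_i$ in terms of the normal bundle of the section $i$, everything collapses to $i_*\reg_p(-1)$. The normal bundle computation is the one genuinely local input: $i$ realizes $\Flag_d$ as a section of $\pr_2\colon C\times\Flag_d\to\Flag_d$, whose conormal bundle is $p_1^*\Omega_C$, so $\omega_i\cong p_1^*\omega_C^\vee$ shifted appropriately, and $p_1^*\omega_C^\vee\otimes i^*\pr_1^*\omega_C\cong\reg$, canceling the $\omega_C$'s and the shift.

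The step I expect to be the main obstacle — though it is bookkeeping rather than a deep difficulty — is tracking the shifts and the twist carefully through Grothendieck–Verdier duality for $i$, making sure the definition \eqref{eq:PR} of $(\_)^R$ with its $\dim X$ shift on the $C$-factor interacts correctly with the $\codim i = 1$ shift from $i$, so that the net shift on $i_*\reg_p(\pm 1)$ is zero. A clean way to organize this is to write $(\_)^R = (\_)^\vee\otimes\pr_2^!\reg$ for $\pr_2\colon C\times\Flag_d\to\Flag_d$ (as was done in the proof of \autoref{lem:PRdual}), since then $(i_*\cL)^R \cong i_*i^!(\cL^\vee\otimes\pr_2^!\reg)\cdot(\text{projection formula})$ and $i^!\pr_2^!\reg\cong(\pr_2\circ i)^!\reg\cong\id^!\reg\cong\reg$ because $\pr_2\circ i=\id_{\Flag_d}$; this makes the cancellation automatic and avoids ever computing a normal bundle by hand. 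I would present the proof in this second form: invoke the triangle from \eqref{eq:sesinduction}, dualize and twist, then use flat base change for $t^*$ on the outer term and the relative-duality-plus-$\pr_2\circ i=\id$ argument on the middle term.
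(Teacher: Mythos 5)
Your proposal is correct and follows essentially the same route as the paper: apply $(\_)^R$ to the short exact sequence \eqref{eq:sesinduction}, identify the outer term via compatibility of the (derived) dual with pull-back along $t$ together with $t^*(\omega_C\boxtimes\reg_{\Flag_{d-1}})\cong\omega_C\boxtimes\reg_{\Flag_d}$, and identify $(i_*\reg_p(1))^R\cong i_*\reg_p(-1)$ by Grothendieck--Verdier duality along $i$. Your preferred packaging $(\_)^R=(\_)^\vee\otimes\pr_2^!\reg$ combined with $i^!\circ\pr_2^!\cong(\pr_2\circ i)^!=\id^!$ is only a streamlined variant of the paper's explicit normal-bundle computation $N_i\cong i^*(\omega_C^\vee\boxtimes\reg_{\Flag_d})$, and the same rewriting already appears in the paper's proof of \autoref{lem:PRdual}, so there is no genuinely different idea involved.
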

\begin{proof}
We get \eqref{eq:FMinductionses} by applying the contravariant exact functor $(\_)^R=(\_)^\vee\otimes(\omega_C\boxtimes \reg_{\Flag_d})[1]$ to \eqref{eq:sesinduction}; compare \eqref{eq:PR}. The middle term of the resulting exact triangle is $\wcQ_d^R$. 

In order to show that the third term of the resulting triangle becomes $i_*\reg_p(1)^R\cong i_*\reg_p(-1)$, note that $i\colon \Flag_d\hookrightarrow C\times \Flag_d$ is the closed embedding of a divisor with normal bundle $N_i\cong \omega_{\Flag_d}\otimes i^*\omega_{C\times \Flag_d}^\vee\cong i^*(\omega_C^\vee\boxtimes \reg_{\Flag_d})$. By Grothendieck--Verdier duality along $i$, we get
\[
(i_*\reg_p(1))^\vee\cong i_*\bigl(\reg_p(-1)\otimes i^*(\omega_C^\vee\boxtimes \reg_{\Flag_d})\bigr)[-1]\,.  
\] 
It follows that 
\[
  i_*\reg_p(1)^R\cong (i_*\reg_p(1))^\vee\otimes (\omega_C\boxtimes\reg_{\Flag_d})[1]\cong i_*\reg_p(-1)\,.
\]
The isomorphism $(t^*\wcQ_{d-1})^R\cong t^*(\wcQ_{d-1}^R)$ for the first term follows from the facts that duals commute with pull-backs, and $t^*(\omega_C\boxtimes\reg_{\Flag_{d-1}})\cong \omega_C \boxtimes\reg_{\Flag_{d}}$.
\end{proof}

\begin{lemma}\label{lem:QRvanish}
 For all $d\in \IN$, we have $(\id\times \nu_d)_*  \wcQ_d^R\cong 0$.
\end{lemma}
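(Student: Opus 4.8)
The plan is to prove $(\id_C\times\nu_d)_*\wcQ_d^R\cong 0$ by induction on $d$, pushing forward the exact triangle
\[
t^*(\wcQ_{d-1}^R)\to \wcQ_d^R\to i_*\reg_p(-1)\xrightarrow{+1}
\]
of \autoref{lem:adjointtriangle} along $\id_C\times\nu_d\colon C\times\Flag_d\to C\times C^d$. The case $d=0$ is trivial since $\wcQ_0=0$. For the induction step it then suffices to show that both the first and the third term of the triangle push forward to $0$.

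For the first term, I would use the factorisation $\nu_d=(\id_C\times\nu_{d-1})\circ p$ of \eqref{eq:nufactor}, which gives $\id_C\times\nu_d=(\id_C\times\id_C\times\nu_{d-1})\circ(\id_C\times p)$ and $t=\pr_{13}\circ(\id_C\times p)$, where now $\pr_{13}\colon C\times C\times\Flag_{d-1}\to C\times\Flag_{d-1}$. Since $\id_C\times p$ is again a $\IP^{\rk E-1}$-bundle, \autoref{lem:nupush}(iii) and the projection formula give $(\id_C\times p)_*\circ(\id_C\times p)^*\cong\id$, so the first term becomes $(\id_C\times\id_C\times\nu_{d-1})_*\pr_{13}^*\wcQ_{d-1}^R$. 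A flat base change along the cartesian square
\[
\xymatrix{
C\times C\times\Flag_{d-1} \ar^{\id\times\id\times\nu_{d-1}}[r] \ar_{\pr_{13}}[d] & C\times C\times C^{d-1} \ar^{\pr_{13}}[d] \\
C\times\Flag_{d-1} \ar^{\id\times\nu_{d-1}}[r] & C\times C^{d-1}
}
\]
turns this into $\pr_{13}^*(\id_C\times\nu_{d-1})_*\wcQ_{d-1}^R$, which is $0$ by the induction hypothesis.

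For the third term, the key observation is that $(\id_C\times\nu_d)\circ i$ factors through $p$: from $t\circ i=p$ one gets $(\id_C\times p)\circ i=\delta\circ p$ with $\delta=(\pr_1,\pr_1,\pr_2)\colon C\times\Flag_{d-1}\to C\times C\times\Flag_{d-1}$, hence $(\id_C\times\nu_d)\circ i=(\id_C\times\id_C\times\nu_{d-1})\circ\delta\circ p$. Consequently
\[
(\id_C\times\nu_d)_*\, i_*\reg_p(-1)\cong\bigl((\id_C\times\id_C\times\nu_{d-1})\circ\delta\bigr)_*\bigl(p_*\reg_p(-1)\bigr)\cong 0,
\]
because $p_*\reg_p(-1)\cong 0$ by \autoref{lem:nupush}(i): the exponent $\ell=-1$ lies in the range $-\rk E<\ell<0$ precisely because of the standing assumption $\rk E\geq 2$, so this step uses that hypothesis in an essential way.

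The argument is short, and I do not anticipate any serious obstacle; the only point requiring care is keeping track of the various projections from $C\times C\times\Flag_{d-1}$ and $C\times C^{d-1}$ and checking that the square above is cartesian so that flat base change applies.
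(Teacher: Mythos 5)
Your argument is correct, and it is essentially the alternative route that the paper only mentions in passing at the end of its proof (``one can also prove the assertion by induction on $d$ using \eqref{eq:FMinductionses}'') without carrying it out. The paper's actual proof is different: it first observes, via \autoref{lem:PRdual}, that $\pr_{2*}\wcQ_d^R\cong \reg_C^{\langle d\rangle\vee}$, invokes the $m=1$ case of the already-established vanishing \autoref{prop:nudualvanish} to get $\nu_{d*}\pr_{2*}(\id\times\nu_d)_*\wcQ_d^R\cong 0$, and then upgrades this to the vanishing of $(\id\times\nu_d)_*\wcQ_d^R$ itself by noting that this object is supported on $D_d$, hence finite over $C^d$, so that vanishing after the further pushforward $\pr_{2*}$ forces vanishing. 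Your induction instead works directly with the triangle of \autoref{lem:adjointtriangle}: the first term vanishes by base change and the induction hypothesis, the third by the factorisation $(\id\times\nu_d)\circ i=(\id\times\id\times\nu_{d-1})\circ\delta\circ p$ together with $p_*\reg_p(-1)\cong 0$, which is where $\rk E\ge 2$ enters — exactly as in the paper's proof of \autoref{lem:maincomp}(ii). What each approach buys: yours is self-contained, using only the projective-bundle facts of \autoref{lem:nupush} and \autoref{lem:adjointtriangle}, and avoids both \autoref{prop:nudualvanish} and the finiteness-of-support argument; the paper's is shorter given that \autoref{prop:nudualvanish} is already available, and makes the conceptual link $\pr_{2*}\wcQ_d^R\cong\reg_C^{\langle d\rangle\vee}$ explicit. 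One cosmetic point: your appeal to \autoref{lem:nupush}(iii) for $(\id_C\times p)_*(\id_C\times p)^*\cong\id$ strictly needs the remark that the statement for $p$ transfers to $\id_C\times p$ by flat base change along the projection $C\times C\times\Flag_{d-1}\to C\times\Flag_{d-1}$ (or by repeating the projective-bundle argument), but this is immediate and not a gap.
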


\begin{proof}
We consider the commutative diagram 
\begin{equation}\label{eq:easycomm}
 \xymatrix{
C\times \Flag_d\ar^{\pr_2}[r] \ar_{\id\times \nu_d}[d] &   \Flag_d \ar^{\nu_d}[d]   \\
C\times C^d \ar^{\pr_2}[r] &    C^d\,.
 }
\end{equation}
Recall from \eqref{eq:tautFM} that $F^{\langle d\rangle}\cong \FM_{\wcQ_d}(F)$ for $F\in \VB(C)$. 
Thus, by \autoref{lem:PRdual}, we have $\pr_{2*}\wcQ_d^R\cong \FM_{\wcQ_d^R}(\reg_C^\vee)\cong \reg_C^{\langle d\rangle \vee}$. Hence, the commutativity of \eqref{eq:easycomm} together with the $m=1$ case of the vanishing result \autoref{prop:nudualvanish} give
\[
 \pr_{2*}(\id\times\nu_d)_* \wcQ_d^R \cong  \nu_{d*}\pr_{2*}\wcQ_d^R \cong \nu_{d*} \reg_C^{\langle d\rangle \vee}\cong 0\,.
\]
As we know a priori that $\supp\bigl( (\id\times\nu_d)_* \wcQ_d^R\bigr)\subset (\id\times\nu_d)\bigl(\supp \wcQ^R\bigr)=D_d$ is finite over $C^d$, the vanishing of $\pr_{2*}(\id\times\nu_d)_* \wcQ_d^R$ implies the asserted vanishing of $(\id\times\nu_d)_* \wcQ_d^R$. 

Alternatively, one can also proof the assertion by induction on $d$ using \eqref{eq:FMinductionses}. 
\end{proof}

We want to use \eqref{eq:FMinductionses} for our induction step of the proof of \autoref{prop:FMpush}. To lighten the notation,  we set
\[
 [\cF,\cG]:=\alpha_{d*}\bigl(\pr_{12}^*\cF\otimes \pr_{32}^*\cG\bigr)\in \D(C\times C^d\times C)
\]
for $\cF,\cG\in \D(C\times \Flag_d)$.
We abbreviate the object that we want to compute even further as 
\[\cS:=[\wcQ_d,\wcQ_{d}^R]= \alpha_{d*}\bigl(\pr_{12}^*\wcQ_d\otimes \pr_{32}^*\wcQ_d^R\bigr)\,.\]
From the short exact sequence \eqref{eq:sesinduction} and the exact triangle \eqref{eq:FMinductionses},  
we have the diagram 
\begin{equation}\label{eq:convodiagbefore}
\xymatrix{
[i_*\reg_p(1),t^*\wcQ_{d-1}^R] \ar[d] \ar[r]  &  [i_*\reg_p(1),\wcQ_d^R]  \ar[r] \ar[d]   &  [i_*\reg_p(1),i_*\reg_p(-1)]  \ar[d]    \\
 [\wcQ_d,t^*\wcQ_{d-1}^R]  \ar[d]  \ar[r]  &  \cS \ar[d]  \ar[r]  &  [\wcQ_d,i_*\reg_p(-1)] \ar[d]   \\
[t^*\wcQ_{d-1}, t^*\wcQ_{d-1}^R]   \ar[r]   & [t^*\wcQ_{d-1},\wcQ_{d}^R]   \ar[r]   &  [t^*\wcQ_{d-1},i_*\reg_p(-1)] 
}
\end{equation}
where all rows and columns are exact triangles in $\D(C\times C^d\times C)$. We now compute the corners of this diagram.

\begin{lemma}\label{lem:maincomp}
For $d\ge 1$, we consider the reduced closed subschemes of $C\times C^d\times C$ given by
\begin{align*}
 T&=\bigl\{(x;x_1,x_2,\dots,x_n;x)\mid x\in\{x_2,\dots,x_n\}\bigr\} \,,\\ 
A&=\bigl\{(x;x,x_2,\dots,x_n;x)\bigr\}\,,\\
B&=A\cap T=\bigl\{(x;x,x_2,\dots,x_n;x)\mid x\in\{x_2,\dots,x_n\}\bigr\}\,.
 \end{align*}
Note that $T=B=\emptyset$ for $d=1$. We have isomorphisms
\begin{align*}
[t^*\wcQ_{d-1}, t^*\wcQ_{d-1}^R]&:=\alpha_{d*}\bigl((\pr_{12}^*t^*\wcQ_{d-1})\otimes (\pr_{32}^*t^*\wcQ_{d-1}^R)\bigr)\cong \reg_T\,,\tag{i}\\
[t^*\wcQ_{d-1},i_*\reg_p(-1)]&:=\alpha_{d*}\bigl((\pr_{12}^*t^*\wcQ_{d-1})\otimes (\pr_{32}^*i_*\reg_p(-1))\bigr)\cong 0\,,\tag{ii}\\
[i_*\reg_p(1),i_*\reg_p(-1)]&:=\alpha_{d*}\bigl((\pr_{12}^*i_*\reg_p(1))\otimes (\pr_{32}^*i_*\reg_p(-1))\bigr)\cong \reg_{A}\,, \tag{iii}\\
[i_*\reg_p(1),t^*\wcQ_{d-1}^R]&:=\alpha_{d*}\bigl((\pr_{12}^*i_*\reg_p(1))\otimes (\pr_{32}^*t^*\wcQ_{d-1}^R)\bigr)\cong \reg_{B}[-1]\,.\tag{iv}\\
\end{align*}
\end{lemma}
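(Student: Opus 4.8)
The four isomorphisms are all computations of the push-forward of a sheaf on $C\times\Flag_d\times C$ to $C\times C^d\times C$ along $\alpha_d=\id\times\nu_d\times\id$, and the plan is to reduce each of them to either the factorisation $\nu_d=(\id_C\times\nu_{d-1})\circ p$ combined with the projective-bundle formulas of \autoref{lem:nupush}, or to the base change in \autoref{prop:McKay}. Throughout I would use that $\nu_d$ factors through $p\colon\Flag_d=\IP(\wcK_{d-1})\to C\times\Flag_{d-1}$, and that the two Fourier--Mukai kernels appearing, $t^*\wcQ_{d-1}$ and $i_*\reg_p(1)$, have explicit descriptions: $t^*\wcQ_{d-1}$ is pulled back along $t=\id\times p_2$ from the universal quotient one level down, while $i_*\reg_p(1)$ is supported on the graph $i=(p_1,\id)$ of $p_1\colon\Flag_d\to C$, so that tensoring $\pr_{12}^*$ or $\pr_{32}^*$ of it against something amounts to restricting to that graph.

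For (iii), $[i_*\reg_p(1),i_*\reg_p(-1)]$, I would first observe that both $\pr_{12}^*i_*\reg_p(1)$ and $\pr_{32}^*i_*\reg_p(-1)$ are supported on the locus where the ``first'' $C$-coordinate equals $p_1$ and the ``last'' $C$-coordinate equals $p_1$; their tensor product is therefore a line bundle (the twists $\reg_p(1)$ and $\reg_p(-1)$ cancel) supported on the locus $\{p_1=x=x'\}$ inside $C\times\Flag_d\times C$, which maps isomorphically under $\alpha_d$ to $A=\{(x;x,x_2,\dots,x_n;x)\}$. One must check the structure sheaf, not a nontrivial twist, appears, and that $\alpha_d$ restricted to this locus is an isomorphism onto $A$ (its image lies in $A$ because $x=p_1$ forces the first support point to be $x$, and it is injective there because the remaining coordinates $x_2,\dots,x_n$ are recovered from $\nu_{d-1}\circ(\text{rest})$ — this is where one uses that $A$ is defined with no constraint on the $x_i$). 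For (ii), $[t^*\wcQ_{d-1},i_*\reg_p(-1)]$: here the factor $\pr_{32}^*i_*\reg_p(-1)$ restricts the computation to the graph of $p_1$, and on that graph $\pr_{12}^*t^*\wcQ_{d-1}$ becomes a twist of the form $p^*(\dots)\otimes\reg_p(\text{negative})$, so $Rp_*$ of it vanishes by \autoref{lem:nupush}(i); alternatively one can run the argument of \autoref{lem:QRvanish} with $\wcQ$ in place of $\wcQ^R$, using \autoref{prop:nudualvanish}.

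For (i), $[t^*\wcQ_{d-1},t^*\wcQ_{d-1}^R]$: both kernels are pulled back along $t$, so by the projection formula $\alpha_{d*}$ of the tensor product factors as $(\id\times\nu_{d-1}\times\id)_*$ applied to $Rp_*$ of a pullback, and $Rp_*p^*\cong\id$ by \autoref{lem:nupush}(iv); this brings us to $\alpha_{(d-1)*}(\pr_{12}^*\wcQ_{d-1}\otimes\pr_{32}^*\wcQ_{d-1}^R)$, which by the induction hypothesis (\autoref{prop:FMpush} for $d-1$) is $\reg_{\Gamma_{d-1}}$; then one identifies the image of $\Gamma_{d-1}$ under the inclusion that ``adds a last coordinate equal to the first'' — but here one must be careful: the first $C$-factor is shared, so the result is $\reg_T$ where $T=\{(x;x_1,\dots,x_n;x)\mid x\in\{x_2,\dots,x_n\}\}$ is exactly $\Gamma_{d-1}$ sitting in coordinates $(x;x_2,\dots,x_n;x)$ with the free extra coordinate $x_1$ inserted. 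Finally (iv), $[i_*\reg_p(1),t^*\wcQ_{d-1}^R]$: combine the two previous mechanisms — restricting to the graph of $p_1$ (from the first kernel) and pulling back along $t$ (from the second). On the graph of $p_1$, $\reg_p(1)$ restricts to a line bundle, and the computation reduces to a base change expressing the result as $\reg_B$ up to a shift; the shift $[-1]$ comes from the fact that restricting $t^*\wcQ_{d-1}^R$ to the divisor $i(\Flag_d)$ and then pushing forward picks up a $\Tor_1$ / conormal-bundle contribution, exactly as the shift appeared in \autoref{lem:adjointtriangle}. The locus $B=A\cap T$ arises because we impose both the graph condition ($x=p_1$, so the new point coincides with the ambient one, giving the $A$-type constraint) and the condition inherited from the lower flag that $x$ lie among $x_2,\dots,x_n$ (the $T$-type constraint).

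\textbf{Main obstacle.} The genuinely delicate part is (iv): one must correctly track the cohomological shift and verify that no nontrivial line-bundle twist survives. The cleanest route is probably to apply the contravariant functor $(\_)^R$ relating $\reg_p(1)$ and $\reg_p(-1)$ together with Grothendieck--Verdier duality along the divisor $i\colon\Flag_d\hookrightarrow C\times\Flag_d$ — precisely the computation already carried out in \autoref{lem:adjointtriangle} — and then reduce to (i) at level $d-1$; the conormal bundle of $i$ is $i^*(\omega_C\boxtimes\reg)$, and checking that this twist disappears after restricting to $B$ and pushing forward (because $B\subset A$ sits over the locus where the extra $C$-coordinate is pinned) is where the careful bookkeeping lives. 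A secondary subtlety in all four parts is confirming that the set-theoretic support statements are scheme-theoretic, i.e.\ that the pushed-forward sheaves are reduced structure sheaves; for this I would invoke, as the paper does elsewhere, that $\alpha_d$ restricted to the relevant loci is an isomorphism (or a $\IP$-bundle with the stated cohomology), using smoothness of $\Flag_d$ and flatness of $\nu_d$ off the diagonals.
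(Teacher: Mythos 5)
Parts (i) and (ii) of your plan are essentially the paper's argument: pull both kernels back along $t$, use $Rp_*p^*\cong\id$ and the induction hypothesis for (i), and kill the $\reg_p(-1)$-factor via $Rp_*\reg_p(-1)\cong 0$ for (ii). In (iii) your justification is off at one point: the locus $\{(p_1(q);q;p_1(q))\}\cong\Flag_d$ does \emph{not} map isomorphically onto $A\cong C^d$ under $\alpha_d$ (its fibres are the iterated projective-bundle fibres of $\nu_d$, of dimension $d(\rk E-1)$); the correct statement is $R\nu_{d*}\reg_{\Flag_d}\cong\reg_{C^d}$, i.e.\ \autoref{lem:nupush}(v), which your closing hedge gestures at, so this is repairable. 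One also has to justify that the derived tensor product of the two graph-supported kernels has no higher Tor (the paper does this via the transversal base-change statement it cites), which your proposal asserts rather than checks.

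The genuine gap is part (iv). The shift $[-1]$ does not come from a $\Tor_1$/conormal contribution of restricting $t^*\wcQ_{d-1}^R$ to the divisor $i(\Flag_d)$: after the projection formula the relevant restriction is along a flat (indeed, after the swap, an iso-onto-graph) map, so no shift is produced there, and a duality reduction to (i) cannot work either, since $\reg_B[-1]$ is not a dual or shift of $\reg_T$ (their supports have different dimensions). Likewise your support heuristic fails: the two kernels constrain $x=x_1$ and $x'\in\{x_2,\dots,x_d\}$, which naively gives a locus of dimension $d$, strictly larger than $B$. What actually produces both the collapse to $B$ and the shift is the chain: $p_*\reg_p(1)\cong\wcK_{d-1}$ (\autoref{lem:nupush}(ii)), so the computation becomes $\alpha_{d-1*}\bigl(\pr_{12}^*\wcK_{d-1}\otimes\pr_{32}^*\wcQ_{d-1}^R\bigr)$; then the universal sequence $0\to\wcK_{d-1}\to E\boxtimes\reg\to\wcQ_{d-1}\to 0$ together with the vanishing $\alpha_{d-1*}\bigl(\pr_1^*E\otimes\pr_{32}^*\wcQ_{d-1}^R\bigr)\cong 0$ (\autoref{lem:QRvanish}, which is where $\rk E\ge 2$ enters through \autoref{prop:nudualvanish}) and the induction hypothesis give $\reg_{\Gamma_{d-1}}[-1]$, hence $\reg_B[-1]$ after the diagonal pushforward. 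None of these ingredients appear in your plan for (iv), so as written it would not close.
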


\begin{proof}
We will use the diagram 
\begin{equation}\label{eq:bigdiag}
  \xymatrix{
  &   & \Flag_d \ar@/_6mm/_{\id}[dll] \ar@/^6mm/^{\id}[drr] \ar_{(\id,p_1)}[dl] \ar_{(p_1,\id,p_1)}[dd]  \ar^{i}[dr]  &   &   \\
  \Flag_d \ar_{i=(p_1,\id)}[dr] & \Flag_d\times C \ar^{i\times \id}[dr] \ar_{\pr_1}[l]  &   & C\times\Flag_d \ar_{\id\times(\id,p_1)}[dl] \ar^{\pr_2}[r]  & \Flag_d \ar^{i=(p_1,\id)}[dl]  \\
 &C\times \Flag_d \ar_{t=\id\times p_2}[d] & C\times \Flag_d\times C \ar_{\pr_{12}}[l]  \ar^{\pr_{32}}[r]  \ar_{\id\times p\times \id}[d]  & C\times \Flag_d \ar^{t=\id\times p_2}[d]&\\
 &C\times \Flag_{d-1} & C\times C\times \Flag_{d-1}\times C \ar_{\pr_{13}\quad}[l]  \ar^{\quad\pr_{43}}[r]  \ar^{\pr_{134}}[d] & C\times \Flag_{d-1}&\\
 & & C\times \Flag_{d-1}\times C \ar^{\pr_{12}}[ul]  \ar_{\pr_{32}}[ur] &  & 
  }
 \end{equation}
where the two squares and the two parallelograms are cartesian, and all triangles are commutative.
We also set 
\begin{align*}\alpha_{d-1}':=\id\times\alpha_{d-1}=\id\times\id\times\nu_{d-1}\times \id\colon C\times C\times \Flag_{d-1}\times C\to& C\times C\times C^{d-1}\times C\\&=C\times C^d\times C\end{align*}
which gives the factorisation
\begin{equation}\label{eq:alphafactor}
 \alpha_d=\alpha_{d-1}'\circ (\id\times p\times \id)\,;
\end{equation}
compare \eqref{eq:nufactor}. 
For part (i), we use the lower part of diagram \eqref{eq:bigdiag} to get
\begin{align*}
&\alpha_{d*}\bigl((\pr_{12}^*t^*\wcQ_{d-1})\otimes (\pr_{32}^*t^*\wcQ_{d-1}^R)\bigr)\\\cong &\alpha_{d-1*}'(\id\times p\times \id)_*\bigl((\pr_{12}^*t^*\wcQ_{d-1})\otimes (\pr_{32}^*t^*\wcQ_{d-1}^R)\bigr) \tag{by \eqref{eq:alphafactor}}
\\\cong &\alpha_{d-1*}'(\id\times p\times \id)_*(\id\times p\times \id)^*\pr_{134}^*\bigl((\pr_{12}^*\wcQ_{d-1})\otimes (\pr_{32}^*\wcQ_{d-1}^R)\bigr)\tag{commutativity of \eqref{eq:bigdiag}}\\
\cong &\alpha_{d-1*}'\pr_{134}^*\bigl((\pr_{12}^*\wcQ_{d-1})\otimes (\pr_{32}^*\wcQ_{d-1}^R)\bigr)\tag{by \autoref{lem:nupush}(iv)}
\end{align*} 
Applying flat base change along the diagram
\[
 \xymatrix{
 &C\times C\times \Flag_{d-1}\times C \ar_{\alpha_{d-1}'}[d] \ar^{\quad\pr_{134}}[r]&  C\times \Flag_{d-1}\times C \ar^{\alpha_{d-1}}[d]\\
C\times C^d\times C \ar@{=}[r]& C\times C\times C^{d-1}\times C \ar^{\pr_{134}}[r]  & C\times C^{d-1}\times C
 }
\]
and the induction hypothesis that the assertion of \autoref{prop:FMpush} is true for $d-1$, we get 
\begin{align*}
\alpha_{d-1*}'\pr_{134}^*\bigl((\pr_{12}^*\wcQ_{d-1})\otimes (\pr_{32}^*\wcQ_{d-1}^R)\bigr)
 \cong &\pr_{134}^*\alpha_{d-1*}\bigl((\pr_{12}^*\wcQ_{d-1})\otimes (\pr_{32}^*\wcQ_{d-1}^R)\bigr)\\
 \cong &\pr_{134}^*\reg_{\Gamma_{d-1}}\\
 \cong &\reg_T\,.
\end{align*}
For part (ii), commutativity on the lower left of \eqref{eq:bigdiag} together with flat base change along the upper right parallelogram of \eqref{eq:bigdiag}, followed by the projection formula for $\id\times p\times \id$ gives 
\begin{align}
&\alpha_{d*}\Bigl(\bigl((\pr_{12}^*t^*\wcQ_{d-1}\bigr)\otimes \bigl(\pr_{32}^*i_*\reg_p(-1)\bigr)\Bigr)\notag\\
 \cong &\alpha_{d*}\Bigl(\bigl((\id\times p\times \id)^*\pr_{13}^*\wcQ_{d-1}\bigr)\otimes \bigl((\id\times (\id,p_1))_*(\reg_C\boxtimes \reg_p(-1))\bigr)\Bigr)\notag
\\
 \cong &\alpha'_{d-1*}\Bigl(\bigl(\pr_{13}^*\wcQ_{d-1}\bigr)\otimes (\id\times p\times \id)_*(\id\times (\id,p_1))_*(\reg_C\boxtimes \reg_p(-1))\Bigr) \label{eq:vanishing}
\end{align}
Now, note that we have a commutative diagram
\[
 \xymatrix{
C\times \Flag_d \ar^{\id\times p\quad}[r] \ar_{\id\times (\id,p_1)}[d] & C\times C\times \Flag_{d-1}  \ar^j[d]  \\
C\times \Flag_d\times C \ar^{\id\times p\times \id\quad}[r] & \quad C\times C\times \Flag_{d-1}\times C
 }
\]
where $j(x,y,z)=(x,y,z,y)$. We have $p_*\reg(-1)=0$; see \autoref{lem:nupush}(i) (here we use the general assumption that $\rk E\ge 2$). Hence we get the vanishing of  
\[
 (\id\times p\times \id)_*(\id\times (\id,p_1))_*(\reg_C\boxtimes \reg_p(-1))\cong j_*(\id\times p)_*(\reg_C\boxtimes \reg_p(-1)))\,,
\]
and accordingly the desired vanishing of \eqref{eq:vanishing}.

In order to prove part (iii), we first apply flat base change along the parallelograms of \eqref{eq:bigdiag} to both tensor factors, which gives
\begin{align*}
[i_*\reg_p(1),i_*\reg_p(-1)]\cong \alpha_{d*}\Bigl(\bigl((i\times \id)_*\pr_1^*\reg_p(1)\bigr)\otimes \bigl((\id\times ( \id,p_1))_*\pr_2^*\reg_p(-1)\bigr)\Bigr)\,.
\end{align*}
Now, note that the rhombus on top of \eqref{eq:bigdiag} is cartesian too, but all of the involved morphism are closed embeddings so none of them is flat. However, as these embeddings are regular, and the intersection is transversal, the base change along this diagram is still exact; see \cite[Cor.\ 2.27]{Kuz}. Using this, together with the projection formula (first along $i\times \id$, then along $(\id, p_1)$) and the commutativity of the curved triangles on top of \eqref{eq:bigdiag}, gives
\begin{align*}
\alpha_{d*}\Bigl(\bigl((i\times \id)_*\pr_1^*\reg_p(1)\bigr)\otimes \bigl((\id\times ( \id,p_1))_*\pr_2^*\reg_p(-1)\bigr)\Bigr)
&\cong \alpha_{d*}(p_1,\id,p_1)_*\bigr(\reg_p(1)\otimes \reg_p(-1)\bigl)\\&\cong \alpha_{d*}(p_1,\id,p_1)_*\reg_{\Flag_d}\,.
\end{align*}
To proceed, we note that there is a commutative diagram
\[
 \xymatrix{
 \Flag_d \ar^{\nu_d}[r] \ar_{(p_1,\id,p_1)}[d] & C^d\ar@{=}[r] & C\times C^{d-1}  \ar^\iota[d]  \\
C\times \Flag_d\times C \ar^{\alpha_d}[r] &  C\times C^d\times C \ar@{=}[r] & C\times C\times C^{d-1}\times C
 }
\]
where $\iota(x,x_2,\dots,x_d)=(x;x,x_2\dots,x_n;x)$.
Hence, by \autoref{lem:nupush}(v), we get
\[
 \alpha_{d*}(p_1,\id,p_1)_*\reg_{\Flag_d}\cong \iota_*\nu_{d*}\reg_{\Flag_d}\cong\iota_*\reg_{C^d}\cong \reg_A\,.
\]
The start of the proof of part (iv) is similar to that of (ii). Commutativity on the lower right of \eqref{eq:bigdiag} together with flat base change along the upper left parallelogram of \eqref{eq:bigdiag} followed by the projection formula along $\id\times p\times \id$ gives 
\begin{align*}
[i_*\reg_p(1)),t^*\wcQ_{d-1}^R]
 \cong \alpha'_{d-1*}\Bigl( \bigl((\id\times p\times \id)_*(i\times\id)_*(\reg_p(1)\boxtimes \reg_C)\bigr)\otimes (\pr_{43}^*\wcQ_{d-1}^R) \Bigr)\,. 
\end{align*}
Denoting by $\delta\colon C\to C\times C$ the diagonal embedding, we have a commutative diagram 
\[
 \xymatrix{
 \Flag_d\times C \ar^{p\times \id}[r] \ar_{i\times\id}[d] & C\times \Flag_{d-1}\times C\ar^{\alpha_{d-1}}[r]\ar^{\delta\times \id\times\id}[d] & C\times C^{d-1}\times C  \ar^{\delta\times \id\times\id}[d]  \\
C\times \Flag_d\times C \ar^{\id\times p\times \id\quad}[r] & \quad C\times C\times \Flag_{d-1}\times C \ar^{\quad\alpha'_{d-1}}[r] & C\times C\times C^{d-1}\times C
 }
\]
and also the equality $\pr_{43}\circ(\delta\times \id\times \id)=\pr_{32}$ of morphisms $C\times \Flag_{d-1}\times C\to C\times \Flag_{d-1}$. Hence, using also \autoref{lem:nupush}(ii), we have
\begin{align}
 & \alpha'_{d-1*}\Bigl( \bigl((\id\times p\times \id)_*(i\times\id)_*(\reg_p(1)\boxtimes \reg_C)\bigr)\otimes (\pr_{43}^*\cQ_{d-1}^R) \Bigr) \notag   \\
 \cong & (\delta\times \id\times \id)_* \alpha_{d-1*}\Bigl( \bigl((p\times \id)_*(\reg_p(1)\boxtimes \reg_C)\bigr)\otimes (\pr_{32}^*\cQ_{d-1}^R) \Bigr)\notag\\
\cong & (\delta\times \id\times \id)_* \alpha_{d-1*}\Bigl( \pr_{12}^*\wcK_{d-1}\otimes \pr_{32}^*\cQ_{d-1}^R \Bigr)\,.\label{eq:part4}
 \end{align}
From the $d-1$ case of \autoref{lem:QRvanish}, we get the vanishing $\alpha_{d-1*}\bigl(\pr_1^*E\otimes \pr_{32}^*\wcQ_{d-1}^R\bigr) \cong 0$.
Hence, the short exact sequence
\[
 0\to \pr_{12}^*\wcK_{d-1} \to \pr_1^*E\to \pr_{12}^*\wcQ_{d-1}\to 0
\]
on $C\times \Flag_{d-1}\times C$ together with the induction hypothesis for \autoref{prop:FMpush} give
\[
\alpha_{d-1*}\Bigl( \pr_{12}^*\wcK_{d-1}\otimes \pr_{32}^*\cQ_{d-1}^R \Bigr)\cong \alpha_{d-1*}\Bigl( \pr_{12}^*\wcQ_{d-1}\otimes \pr_{32}^*\cQ_{d-1}^R \Bigr)[-1]\cong \reg_{\Gamma_{d-1}}[-1]\,.
\]
Plugging this into \eqref{eq:part4} gives the asserted
\[
(\delta\times \id\times \id)_* \alpha_{d-1*}\Bigl( \pr_{12}^*\wcK_{d-1}\otimes (\pr_{32}^*\cQ_{d-1}^R) \Bigr)\cong (\delta\times \id\times \id)_*\reg_{\Gamma_{d-1}}[-1]\cong \reg_B[-1]\,.\qedhere 
\]
\end{proof}

\begin{lemma}\label{eq:unionses}
 We have a short exact sequence
\begin{equation}\label{eq:Gammases}
  0\to \reg_A(-B)\to \reg_{\Gamma_d}\to \reg_T\to 0\,.
\end{equation}
\end{lemma}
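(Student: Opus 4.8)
The plan is to identify \eqref{eq:Gammases} as the standard exact sequence attached to the decomposition of the reduced scheme $\Gamma_d$ into the two closed subschemes $A$ and $T$. First I would check that $\Gamma_d=A\cup T$ already scheme-theoretically, i.e.\ $\I_{\Gamma_d}=\I_A\cap\I_T$ as ideal sheaves on $C\times C^d\times C$: set-theoretically this is clear, since a point $(x;x_1,\dots,x_d;x)$ satisfies $x\in\{x_1,\dots,x_d\}$ iff $x=x_1$ (so it lies in $A$) or $x\in\{x_2,\dots,x_d\}$ (so it lies in $T$); and since $A$ and $T$ are reduced, $\I_A$ and $\I_T$ are radical, hence so is $\I_A\cap\I_T$, so this ideal sheaf defines the reduced subscheme supported on $A\cup T$, which is $\Gamma_d$; hence $\I_A\cap\I_T=\I_{\Gamma_d}$.

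Next I would pin down the scheme-theoretic intersection $A\cap T$ and check that $B$ is an effective Cartier divisor on $A$. Let $W:=\{(x;x_1,\dots,x_d;x)\}\cong C\times C^d\subset C\times C^d\times C$, a smooth subvariety containing $\Gamma_d$, $A$, $T$ and $B$; since $A,T\subseteq W$ we have $\I_A+\I_T\supseteq\I_W$, so the scheme-theoretic intersection $A\cap T$ computed in $W$ agrees with the one computed in the ambient space. In the coordinates $(x,x_1,\dots,x_d)$ on $W$, the subscheme $A$ is the smooth divisor $\{x_1=x\}$, canonically isomorphic to $C\times C^{d-1}$, while $T$ is the preimage of the reduced universal divisor $D_{d-1}\subset C\times C^{d-1}$ under the smooth projection $W\cong C\times C^d\to C\times C^{d-1}$ forgetting $x_1$. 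Since smooth pullback preserves reducedness, $T$ equals this scheme-theoretic preimage; restricting it further to $A$, where the projection restricts to an isomorphism onto $C\times C^{d-1}$, one sees that $A\cap T$ is exactly the reduced effective Cartier divisor $D_{d-1}$ on the smooth variety $A$, i.e.\ $A\cap T=B$ and $\reg_A(-B)=\I_{B/A}$. For $d=1$ this is vacuous: $T=B=\emptyset$, $\Gamma_1=A$, and \eqref{eq:Gammases} reads $0\to\reg_A\to\reg_A\to 0\to 0$.

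Finally I would assemble the sequence. The restriction $\reg_{\Gamma_d}\twoheadrightarrow\reg_T$ is surjective with kernel $\I_T/\I_{\Gamma_d}=\I_T/(\I_A\cap\I_T)$, and the composite $\I_T\hookrightarrow\reg_{C\times C^d\times C}\twoheadrightarrow\reg_A$ has kernel $\I_T\cap\I_A=\I_{\Gamma_d}$ and image $(\I_T+\I_A)/\I_A=\I_B/\I_A=\I_{B/A}=\reg_A(-B)$; this is precisely the second isomorphism theorem, and it identifies the kernel of $\reg_{\Gamma_d}\to\reg_T$ with $\reg_A(-B)$, embedded in $\reg_{\Gamma_d}$ as the subsheaf of sections supported on $A$ and vanishing along $B$. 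This yields \eqref{eq:Gammases}. The only input that is not purely formal is the scheme-theoretic equality $A\cap T=B$, i.e.\ that $A$ and $T$ meet without embedded points or nilpotents; this is exactly what the smooth-pullback description of $T$ in the second paragraph provides, and is in the same spirit as the reducedness statements for the divisors $D_d$ and $\Xi_d$ recorded in \autoref{subsect:Xi}.
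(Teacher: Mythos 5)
Your proposal is correct and takes essentially the same route as the paper: the paper observes that $\pr_{12}$ maps $\Gamma_d$, $A$, $T$ isomorphically onto divisors in $C\times C^d$ and then quotes the standard short exact sequence for a union of effective divisors, which is exactly what your argument in the slice $W\cong C\times C^d$ verifies by hand, including the scheme-theoretic identity $A\cap T=B$ needed to identify the kernel with $\reg_A(-B)$. So the only difference is that you prove the standard sequence rather than cite it; there is no gap.
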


\begin{proof}
We have $\Gamma_d=A\cup T$ and $B=A\cap T$. Furthermore, the projection $\pr_{12}\colon C\times C^d\times C\to C\times C^d$ maps $\Gamma_d$, $A$, and $T$ isomorphically to divisors in $C\times C^d$. Hence, 
\eqref{eq:Gammases} is the standard short exact sequence for unions of effective divisors (see also \cite[Eq.\ (7)]{Krug--stab} for this specific situation). 
\end{proof}

\begin{lemma}\label{lem:RFsect}
 We have $\Ho^0(\cS)\neq 0$.
\end{lemma}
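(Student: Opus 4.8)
The plan is to identify $\Ho^0(\cS)$ --- which we read as the degree-zero hypercohomology $\Ho^0\bigl(C\times C^d\times C,\cS\bigr)=\Hom_{\D(C\times C^d\times C)}(\reg,\cS)$ --- with the endomorphism space of a tautological bundle on $\Flag_d$, in which the identity morphism is the required nonzero class. (We tacitly assume $d\ge 1$; for $d=0$ one has $\wcQ_0=0$, hence $\cS=0$, but the lemma is only invoked inside the induction step of the proof of \autoref{prop:FMpush}, where $d\ge 1$.)

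First I would transport the computation from $C\times C^d\times C$ up to $C\times\Flag_d\times C$. By the adjunction $\alpha_d^*\dashv\alpha_{d*}$,
\[
 \Ho^0(\cS)\cong\Hom_{C\times\Flag_d\times C}\bigl(\reg,\ \pr_{12}^*\wcQ_d\otimes\pr_{32}^*\wcQ_d^R\bigr)\,.
\]
By \eqref{eq:PR} one has $\pr_{32}^*\wcQ_d^R\cong(\pr_{32}^*\wcQ_d)^\vee\otimes\pr_3^*\omega_C[1]$, and, $\pr_{32}^*\wcQ_d$ being perfect, $\pr_{12}^*\wcQ_d\otimes(\pr_{32}^*\wcQ_d)^\vee\cong\sHom(\pr_{32}^*\wcQ_d,\pr_{12}^*\wcQ_d)$; tensor--hom adjunction thus rewrites the right-hand side as
\[
 \Hom_{C\times\Flag_d\times C}\bigl(\pr_{32}^*\wcQ_d,\ \pr_{12}^*\wcQ_d\otimes\pr_3^*\omega_C[1]\bigr)\,.
\]

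Next I would invoke relative Grothendieck--Verdier duality along the smooth projective projection $q:=\pr_{12}\colon C\times\Flag_d\times C\to C\times\Flag_d$. Its relative dualising complex is $\pr_3^*\omega_C[1]$, so $\pr_{12}^*\wcQ_d\otimes\pr_3^*\omega_C[1]\cong q^!\wcQ_d$, and duality gives $\Ho^0(\cS)\cong\Hom_{C\times\Flag_d}\bigl(q_*\pr_{32}^*\wcQ_d,\ \wcQ_d\bigr)$. Now $\pr_{32}$ and $q=\pr_{12}$ are the two projections of the cartesian square realising $C\times\Flag_d\times C$ as the fibre product of two copies of $C\times\Flag_d$ over $\Flag_d$, the structure maps being the (flat) projections to $\Flag_d$; flat base change therefore yields $q_*\pr_{32}^*\wcQ_d\cong\pr_2^*\pr_{2*}\wcQ_d$. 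Since $\pr_{2*}\wcQ_d\cong\reg_C^{\langle d\rangle}$ --- the $F=\reg_C$ case of the identity $F^{\langle d\rangle}\cong\pr_{2*}(\pr_1^*F\otimes\wcQ_d)$ used in the proof of \autoref{lem:tautses} --- the adjunction $\pr_2^*\dashv\pr_{2*}$ finally produces
\[
 \Ho^0(\cS)\cong\Hom_{C\times\Flag_d}\bigl(\pr_2^*\reg_C^{\langle d\rangle},\ \wcQ_d\bigr)\cong\Hom_{\Flag_d}\bigl(\reg_C^{\langle d\rangle},\ \pr_{2*}\wcQ_d\bigr)\cong\End_{\Flag_d}\bigl(\reg_C^{\langle d\rangle}\bigr)\,.
\]
As $\reg_C^{\langle d\rangle}=f_d^*\reg_C^{\llb d\rrb}$ is a vector bundle of positive rank $d$, its identity endomorphism is nonzero, whence $\Ho^0(\cS)\neq 0$.

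The computation is essentially formal once set up; the two points needing care are to keep the shift coming from the relative dualising complex of $q$ balanced against the shift built into $(\_)^R$ (the two cancel), and to choose the cartesian square for the base-change step so that $q_*\pr_{32}^*\wcQ_d$ emerges as a pullback from $\Flag_d$ rather than as a larger object. An alternative, closer in spirit to the bookkeeping around \eqref{eq:convodiagbefore}, would run an induction on $d$ using the corners computed in \autoref{lem:maincomp}, but the duality route above seems the shortest.
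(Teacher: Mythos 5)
Your argument is correct, and it reaches the same underlying mechanism as the paper — the adjunction for $\FM_{\wcQ_d}$ — but by a different route. The paper's proof identifies $\pr_{13*}\cS\cong \wcQ_d\star\wcQ_d^R$ as the kernel of $(\FM_{\wcQ_d})^R\circ\FM_{\wcQ_d}$ and then quotes the fact (from [CW, App.\ A]) that the unit of adjunction is induced by a kernel-level map $\reg_\Delta\to\wcQ_d\star\wcQ_d^R$, which after precomposition with $\reg_{C\times C}\to\reg_\Delta$ yields a non-vanishing global section. You instead compute the whole space: the chain adjunction--tensor-hom--Grothendieck--Verdier duality along $\pr_{12}$--flat base change along the cartesian square over $\Flag_d$ gives $\Ho^0(\cS)\cong\End_{\Flag_d}\bigl(\reg_C^{\langle d\rangle}\bigr)$, which contains the identity. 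Each step checks out (the shift in $(\_)^R$ from \eqref{eq:PR} indeed cancels against the relative dualising complex $\pr_3^*\omega_C[1]$ of $\pr_{12}$, the square you use is cartesian with projections $\pr_{12}$ and $\pr_{32}$, and $\pr_{2*}\wcQ_d\cong\reg_C^{\langle d\rangle}$ is the $F=\reg_C$ case of the formula used in \autoref{lem:tautses}), and your reading of $\Ho^0(\cS)$ as $\Hom(\reg,\cS)$ is the one needed later, since at the point where the lemma is applied $\cS$ is a sheaf. What your route buys: it is self-contained (no appeal to the kernel-level description of the unit), it sidesteps the paper's implicit verification that the constructed section is non-vanishing -- the identity endomorphism of a rank-$d$ bundle is evidently nonzero -- and it even identifies $\Ho^0(\cS)$ completely, which is more than the lemma asks. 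What the paper's route buys: given \autoref{lem:RF} it is shorter, and it exhibits an explicit section rather than an abstract isomorphism. Your caveat that $d\ge 1$ is needed (the statement fails for $d=0$, where $\cS=0$) is consistent with the lemma's role inside the induction step.
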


\begin{proof}
 For $\pr_{13}\colon C\times C^d\times C\to C\times C$, we have $\pr_{13*}\cS\cong \wcQ_d\star \wcQ_d^R$, which is the Fourier--Mukai kernel of the composition of $\FM_{\wcQ_d}$ with its right-adjoint; see \autoref{lem:RF}.
 The unit of adjunction $\id_{\D(C)}\to (\FM_{\wcQ_d})^R\circ \FM_{\wcQ_d}$ is induced by a map $\reg_\Delta\to \wcQ_d\star \wcQ_d^R$ between the Fourier--Mukai kernels; see \cite[App.\ A]{CW}. Precomposing this map with the restriction map $\reg_{C\times C}\to \reg_\Delta$ gives a non-vanishing global section of $\wcQ_d\star \wcQ_d^R\cong \pr_{13*}\cS$, hence also of $\cS$. 
\end{proof}

\begin{proof}[Finishing the proof of \autoref{prop:FMpush}]
 In \autoref{lem:maincomp}, we computed all four corners of diagram \eqref{eq:convodiagbefore}, which gives
\begin{equation}\label{eq:convdiag}
\xymatrix{
\reg_B[-1] \ar[r] \ar[d]  &  [i_*\reg_p(1),\wcQ_d^R]  \ar[r] \ar[d]    &  \reg_A  \ar[d] \ar^{u}[r]  & \reg_B   \\
 [\wcQ_d,t^*\wcQ_{d-1}^R]  \ar[d]  \ar[r]  &  \cS \ar[d]  \ar[r]  &  [\wcQ_d,i_*\reg_p(-1)] \ar[d] &   \\
\reg_T   \ar^{\cong\qquad} [r] \ar^v[d]  & [t^*\wcQ_{d-1},\wcQ_{d}^R]   \ar[r]   &  0  &  \\
\reg_B &   &   &
 }
\end{equation}
For $d=1$ (necessary only if the reader did not already do this case themself as the base case), we have $B=T=\emptyset$. Hence $\reg_B\cong \reg_T\cong 0$, and \eqref{eq:convdiag} gives the assertion $\cS\cong \reg_A\cong \reg_{\Gamma_1}$.

So, from now on, let $d\ge 2$. 
We assume for a contradiction that $u\colon \reg_A \to \reg_B$ is the zero map, which would imply 
$[i_*\reg_p(1),\wcQ_d^R]\cong \reg_A\oplus \reg_B[-1]$. Taking the long exact cohomology sequence associated to the  
middle vertical exact triangle of \eqref{eq:convdiag} then gives
\begin{equation}\label{eq:convses}
 0\to \reg_A\to \cH^0(\cS)\to \reg_T\xrightarrow v \reg_B \to \cH^1(\cS)\to 0\,.
\end{equation}
As $B$ is projective and connected, the map $v\colon \reg_T\to \reg_B$ is either zero, or the restriction map (up to non-zero scalar multiplication). 
Let us first assume that $v=0$, which gives $\cH^1(\cS)\cong \reg_B$. By
\autoref{lem:convinva}, $\cS$ is a pull back of an object in $\D(C\times C^{(d)}\times C)$ along the $\sym_d$-quotient morphism
\[
\widehat \pi:=\id\times \pi\times \id\colon C\times C^d\times C\to C\times C^{(d)}\times C\,.
\]
As $\widehat \pi$ is flat (see \autoref{lem:piflat}), all  cohomology sheaves of $\cS$ must be a pull-back of some sheaf on $C\times C^{(d)}\times C$. In particular, \[\cH^1(\cS)\cong \reg_B\cong\widehat\pi^*_{\sym_d}\widehat\pi_*^{\sym_d}\reg_B\,;\]
see \autoref{cor:descent}. But this is not possible. Indeed, for $d>2$, the subvariety $B\subset C\times C^d\times C$ is not even $\sym_d$-invariant. For $d=2$, the reduced subscheme $B$ is $\sym_2$-invariant and connected. Hence, there are exactly two possibilities to equip $\reg_B$ with a $\sym_d$-linearisation; see \autoref{lem:twolin}. 
If $\reg_B$ is equipped with the canonical linearisation, then we have $\widehat \pi_*^{\sym_2}\reg_B\cong \reg_{\iota(C)}$ where
\[\iota\colon C\hookrightarrow C\times C^{(2)}\times C\quad,\quad \iota(x)=(x;2x;x)\] is the small diagonal. But since $\widehat \pi$ is flat of degree $2$, we find $\widehat \pi^* \reg_{\iota(C)}\cong \reg_{B'}$ for some \emph{non-reduced} subscheme $B'\subset C\times C^2\times C$ with $B'_{\mathsf{red}}=B$. In particular, $\widehat\pi^*_{\sym_d}\widehat\pi_*^{\sym_d}\reg_B\not \cong \reg_B$.
If $\reg_B$ is equipped with the anti-canonical linearisation instead, we have $\widehat\pi_*^{\sym_2}\reg_B\cong 0$ which also gives $\widehat\pi^*_{\sym_d}\widehat\pi_*^{\sym_d}\reg_B\not \cong \reg_B$.
In summary, we see that $v=0$ is impossible. 

Hence, (up to a non-zero scalar multiple) $v\colon \reg_T\twoheadrightarrow \reg_B$ is the restriction map. Then, $\cS$ is a sheaf concentrated in degree zero, and \eqref{eq:convses} becomes
\begin{equation}\label{eq:convses3}
 0\to \reg_A\to \cS\to \reg_T(-B)\to 0\,.
\end{equation}
As $T$ is connected, $\Ho^0(\reg_T(-B))=0$. Hence $\IC\cong \Ho^*(\reg_A)\cong \Ho^0(\cS)$. This means that every global section of $\cS$ is supported on $A$. But as $\cS$ carries a $\sym_d$-linearisation we also get an embedding $\tau^*\reg_A\cong \reg_{\tau(A)}\hookrightarrow \tau^*\cS\cong \cS$ where $\tau=(1\, 2)\in \sym_d$.
So, $\cS$ would also have a global section with support on the whole $\tau(A)$. This is a contradiction since $\tau(A)\not \subset A$.
This shows that our original assumption $u=0$ must have been wrong.

Hence, $u\colon \reg_A\twoheadrightarrow \reg_B$ is (up to non-zero scalar multiplication) the restriction map. So the the middle vertical triangle of \eqref{eq:convdiag} becomes the short exact sequence
\begin{equation}\label{eq:Tses}
 0\to \reg_A(-B)\to \cS\to \reg_T\to 0\,.
\end{equation}
Note that \eqref{eq:Tses} cannot split, as $\cS\cong \reg_A(-B)\oplus \reg_T$ would not be $\sym_d$-invariant (by essentially the same argument as the one above for the middle term of \eqref{eq:convses3} not to be invariant).
We have $\Ho^0(\reg_A(-B))=0$ and $\Ho^0(\reg_T)\cong \IC$. Hence, by \autoref{lem:RFsect}, $\Ho^0(\cS)\to \Ho^0(\reg_T)$ is an isomorphism. Let $s\in \Ho^0(\cS)$ correspond to $1\in \Ho^0(\reg_T)$ under this isomorphism. As $A\cup T=\Gamma_d$, the sequence \eqref{eq:Tses} shows that $s$ factors as
\[
 \reg_{C\times C^d\times C}\twoheadrightarrow \reg_{\Gamma_d} \xrightarrow{\bar s} \cS\,.
\]
Hence, we get a morphism of short exact sequences 
\[
\xymatrix{
 0\ar[r] & \reg_A(-B) \ar[r]\ar^{\alpha}[d]  & \reg_{\Gamma_d} \ar[r]\ar^{\bar s}[d]  &  \reg_T  \ar[r]\ar^1[d] & 0
 \\
0\ar[r] & \reg_A(-B) \ar[r]  & \cS \ar[r]  &  \reg_T  \ar[r] & 0
}
\]
where the lower sequence is \eqref{eq:Tses} and the upper sequence is the one from \autoref{eq:unionses}.
For $\alpha=0$, the lower sequence would split, what we ruled out above. 
Hence, $\alpha$ is an isomorphism, and the five lemma shows that $\bar s\colon \reg_{\Gamma_d}\to \cS$ is an isomorphism too.
\end{proof}

\begin{prop}\label{prop:FMSdC}For every $d\in \IN$, we have
\[
(\id\times \mu_d \times \id)_*\bigl(\pr_{12}^*\cQ\otimes \pr_{32}^*\cQ^R\bigr)\cong \reg_{\Gamma_{(d)}}\,. 
\] 
\end{prop}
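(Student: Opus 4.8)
The plan is to combine \autoref{prop:FMpush} with \autoref{lem:convinva}; the only work beyond a formal manipulation will be to pin down a $\sym_d$-linearisation. Write $\widehat\pi:=\id\times\pi_d\times\id\colon C\times C^d\times C\to C\times C^{(d)}\times C$ and $\cS:=\alpha_{d*}\bigl(\pr_{12}^*\wcQ_d\otimes\pr_{32}^*\wcQ_d^R\bigr)$. Then \eqref{eq:FMinva} of \autoref{lem:convinva} immediately gives
\[
(\id\times \mu_d \times \id)_*\bigl(\pr_{12}^*\cQ\otimes \pr_{32}^*\cQ^R\bigr)\cong \widehat\pi_*^{\sym_d}\cS\,,
\]
while \autoref{prop:FMpush} identifies $\cS$ with $\reg_{\Gamma_d}$ in $\D(C\times C^d\times C)$. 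Transporting the $\sym_d$-linearisation of $\cS$ furnished by \autoref{lem:convinva} along this isomorphism, it then suffices to show that the resulting $\sym_d$-linearisation on $\reg_{\Gamma_d}$ is the canonical one, since \eqref{eq:Gammainva} reads $\widehat\pi_*^{\sym_d}\reg_{\Gamma_d}\cong\reg_{\Gamma_{(d)}}$ precisely for the canonical linearisation.

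For $d=1$ the group $\sym_1$ is trivial and there is nothing to check, so I would assume $d\ge 2$. The subscheme $\Gamma_d\subset C\times C^d\times C$ is reduced and connected, being the union of the $d$ components $\{x=x_i\}\cong C^d$, any two of which meet along $\{x=x_i=x_j\}$; hence by \autoref{lem:twolin}(i) the only two $\sym_d$-linearisations of $\reg_{\Gamma_d}$ are the canonical and the anti-canonical one, and the one carried by $\cS$ is one of these two. To rule out the anti-canonical one I would invoke \eqref{eq:FMpull} of \autoref{lem:convinva}, which exhibits $\cS$ as $\widehat\pi^*_{\sym_d}$ of an object of $\D(C\times C^{(d)}\times C)$; in particular $\cS$ descends along $\widehat\pi$. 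On the other hand, for the point $z=(x;x,x,x_3,\dots,x_d;x)\in\Gamma_d$ with $x,x_3,\dots,x_d$ pairwise distinct, the stabiliser contains the transposition $(1\,2)$, of sign $-1$, so by \autoref{lem:twolin}(ii) the sheaf $\reg_{\Gamma_d}$ equipped with the anti-canonical linearisation does \emph{not} descend along $\widehat\pi$. Therefore $\cS$ carries the canonical linearisation, and combining the two displayed isomorphisms with \eqref{eq:Gammainva} finishes the proof.

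I expect this last point --- identifying the linearisation on $\cS$ --- to be the only genuine obstacle; the rest is a bookkeeping assembly of \autoref{prop:FMpush}, \autoref{lem:convinva} and \eqref{eq:Gammainva}. As an alternative to the descent argument one could instead produce a nonzero $\sym_d$-invariant global section of $\cS$ (for instance by pulling back along $\widehat\pi$ the section of $(\id\times\mu_d\times\id)_*(\pr_{12}^*\cQ\otimes\pr_{32}^*\cQ^R)$ coming from the unit of the adjunction, as in \autoref{lem:RFsect}) and note that the anti-canonical linearisation acts on $\Ho^0(\reg_{\Gamma_d})\cong\IC$ by the sign character; but the descent argument seems to be the cleaner route.
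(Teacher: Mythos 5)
Your proposal is correct and follows essentially the same route as the paper: combine \autoref{prop:FMpush} with \autoref{lem:convinva}, then for $d\ge 2$ use \autoref{lem:twolin} together with the fact that $\cS$ descends along $\widehat\pi$ (via \eqref{eq:FMpull}) and that some point of $\Gamma_d$ has a stabiliser element of sign $-1$ to conclude the linearisation is the canonical one, after which \eqref{eq:FMinva} and \eqref{eq:Gammainva} finish the argument. The only cosmetic difference is your choice of witness point (the paper takes $z=(x;x,\dots,x;x)$) and your explicit check that $\Gamma_d$ is reduced and connected, which the paper leaves implicit.
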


\begin{proof}
For $d=1$, the map $\id \times\pi_1\times \id$ is the identity. So the assertion follows directly form \autoref{prop:FMpush} together with \autoref{lem:convinva}.  

For $d\ge 2$ there are points $z\in \Gamma_d$ for which some $g_z\in G_z$ has $\sgn(g_z)=-1$, for example $z=(x;x,\dots,x;x)$ for $x\in C$. Hence, by \autoref{lem:twolin}, the $\sym_d$-linearisation of  \[\alpha_{*}\bigl(\pr_{12}^*\wcQ_d\otimes \pr_{32}^*\wcQ_d^R\bigr)\cong \reg_{\Gamma_d}\] for which \eqref{eq:FMpull} and \eqref{eq:FMinva} in \autoref{lem:convinva} hold is the canonical one.  
Hence, the assertion follows by \eqref{eq:FMinva} and \eqref{eq:Gammainva}. 
\end{proof}

\begin{theorem}\label{thm:muHom} For every $d\in \IN$, and all $F,G\in \VB(C)$, we have
\[
 \mu_*\sHom(F^{\llb d\rrb}, G^{\llb d\rrb})\cong \sHom(F,G)^{[d]}\,.
\]
\end{theorem}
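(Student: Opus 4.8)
The plan is to reduce the statement, via \autoref{cor:HomtautFM}, to the convolution kernel computed in \autoref{prop:FMSdC}, and then to recognise the resulting object as the tautological bundle $\sHom(F,G)^{[d]}$ by unwinding the structure sheaf of $\Gamma_{(d)}$. First I would use \autoref{cor:HomtautFM} to write $\sHom(F^{\llb d\rrb},G^{\llb d\rrb})\cong\pr_{2*}\bigl(\pr_1^*G\otimes\pr_{12}^*\cQ_d\otimes\pr_{32}^*\cQ_d^R\otimes\pr_3^*F^\vee\bigr)$, where the $\pr$ are the projections from $C\times\Quot_d\times C$. Since $\mu\circ\pr_2=\pr_2\circ(\id\times\mu\times\id)$, and since the sheaves $\pr_1^*G$ and $\pr_3^*F^\vee$ on $C\times\Quot_d\times C$ are pulled back along $\id\times\mu\times\id$ from the corresponding sheaves on $C\times C^{(d)}\times C$, the projection formula for $\id\times\mu\times\id$ together with \autoref{prop:FMSdC} gives
\begin{align*}
\mu_*\sHom(F^{\llb d\rrb},G^{\llb d\rrb})
&\cong\pr_{2*}(\id\times\mu\times\id)_*\bigl(\pr_1^*G\otimes\pr_{12}^*\cQ_d\otimes\pr_{32}^*\cQ_d^R\otimes\pr_3^*F^\vee\bigr)\\
&\cong\pr_{2*}\Bigl(\pr_1^*G\otimes\pr_3^*F^\vee\otimes(\id\times\mu\times\id)_*\bigl(\pr_{12}^*\cQ_d\otimes\pr_{32}^*\cQ_d^R\bigr)\Bigr)\\
&\cong\pr_{2*}\bigl(\pr_1^*G\otimes\pr_3^*F^\vee\otimes\reg_{\Gamma_{(d)}}\bigr)\,.
\end{align*}

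Next I would unwind $\Gamma_{(d)}$. Recall from \autoref{subsect:Xi} that $\Gamma_{(d)}=\iota_{(d)}(\Xi_d)$ with $\iota_{(d)}\colon C\times C^{(d)}\hookrightarrow C\times C^{(d)}\times C$, $(x;D)\mapsto(x;D;x)$, so $\iota_{(d)}$ restricts to a closed embedding $\Xi_d\hookrightarrow C\times C^{(d)}\times C$ with image $\Gamma_{(d)}$; under it both $\pr_1$ and $\pr_3$ restrict to the structure map $a\colon\Xi_d\to C$ of the universal divisor, while $\pr_2$ restricts to $b\colon\Xi_d\to C^{(d)}$. The projection formula along $\iota_{(d)}$ then yields $\pr_1^*G\otimes\pr_3^*F^\vee\otimes\reg_{\Gamma_{(d)}}\cong\iota_{(d)*}\bigl(a^*G\otimes a^*F^\vee\bigr)\cong\iota_{(d)*}a^*\sHom(F,G)$, and applying $\pr_{2*}$ gives $b_*a^*\sHom(F,G)=\sHom(F,G)^{[d]}$, which is the claim. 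Since the right-hand side is a sheaf, this also records that $R^i\mu_*\sHom(F^{\llb d\rrb},G^{\llb d\rrb})=0$ for $i>0$. Finally, taking $\Ho^*(C^{(d)},-)$ of the established isomorphism and invoking \autoref{prop:cohtautSdC} for the vector bundle $\sHom(F,G)$ yields the $\Ext$-formula \eqref{eq:Extformula}, since $\Ext^*(F^{\llb d\rrb},G^{\llb d\rrb})\cong\Ho^*\bigl(\Quot_d,\sHom(F^{\llb d\rrb},G^{\llb d\rrb})\bigr)$.

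As for the main obstacle: the substance of the argument is already contained in \autoref{prop:FMSdC}, hence in the long induction proving \autoref{prop:FMpush}, so what remains here is essentially bookkeeping. The only points that need genuine care are, first, verifying that $\pr_1^*G$ and $\pr_3^*F^\vee$ really are pulled back along $\id\times\mu\times\id$, so that the projection formula applies; and second, correctly matching the restrictions of $\pr_1,\pr_2,\pr_3$ to $\Gamma_{(d)}\cong\Xi_d$ with the structure maps $a$ and $b$, which is exactly where the specific shape $(x;D)\mapsto(x;D;x)$ of $\iota_{(d)}$ is used.
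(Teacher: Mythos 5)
Your proposal is correct and matches the paper's own proof essentially step for step: it reduces to \autoref{cor:HomtautFM}, pushes forward along $\id\times\mu\times\id$ using the projection formula and \autoref{prop:FMSdC}, and then unwinds $\reg_{\Gamma_{(d)}}$ via $\iota_{(d)}$ and the maps $a,b$ to recover $b_*a^*\sHom(F,G)=\sHom(F,G)^{[d]}$. The added observations on higher direct image vanishing and the deduction of \eqref{eq:Extformula} via \autoref{prop:cohtautSdC} are also exactly how the paper proceeds.
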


\begin{proof}
Considering the commutative diagram 
\[
 \xymatrix{
&\ar_{\pr_{13}}[dl] C\times \Quot_d\times C \ar^{\qquad\pr_2}[r]  \ar^{\id\times \mu\times \id}[d] & \Quot_d \ar_{\mu}[d]  \\
C\times C& \ar^{\pr_{13}\quad}[l] C\times C^{(d)}\times C \ar^{\quad\pr_2}[r] &  C^{(d)}\,,
  }
\]
we get 
\begin{align*}
 &\mu_*\sHom(F^{\llb d\rrb}, G^{\llb d\rrb})\\
 \cong & \mu_*\pr_{2*}\Bigl(\pr_1^*G\otimes \pr_{12}^*\cQ_d\otimes \pr_{32}^*\cQ_d^R\otimes \pr_3^* F^\vee  \Bigr)\tag{by \autoref{cor:HomtautFM}}\\  
\cong & \pr_{2*}(\id\times \mu \times \id)_*\Bigl(\pr_1^*G\otimes \pr_{12}^*\cQ_d\otimes \pr_{32}^*\cQ_d^R\otimes \pr_3^* F^\vee  \Bigr)\tag{commutativity of the square}\\
\cong & \pr_{2*}\Bigl(\pr_1^*G\otimes  (\id\times \mu \times \id)_*\bigl(\pr_{12}^*\cQ_d\otimes \pr_{32}^*\cQ_d^R\bigr)\otimes \pr_3^* F^\vee  \Bigr)\tag{commutativity of the triangle}\\ 
\cong & \pr_{2*}\Bigl(\pr_1^*G\otimes  \reg_{\Gamma_{(d)}}\otimes \pr_3^* F^\vee  \Bigr)\tag{by \autoref{prop:FMSdC}}\,.
\end{align*}
Now, recall from \autoref{subsect:Xi} that $\reg_{\Gamma_{(d)}}\cong \iota_{(d)*}\reg_{\Xi_d}$. Following the notation of \autoref{subsect:SdC}, we have 
\[
 (\pr_1\circ \iota_{(d)})_{\mid \Xi}=(\pr_3\circ \iota_{(d)})_{\mid \Xi}=a\colon \Xi \to C\quad,\quad (\pr_2\circ \iota_{(d)})_{\mid \Xi}=b\colon \Xi \to C^{(d)}   
\]
So, finally, the projection formula along $\iota_{(d)}$ gives
\[
 \pr_{2*}\Bigl(\pr_1^*G\otimes  \reg_{\Gamma_{(d)}}\otimes \pr_3^* F^\vee  \Bigr)\cong b_*a^*(G\otimes F^\vee)\cong (G\otimes F^\vee)^{[n]}\cong \sHom(F,G)^{[n]}\,.\qedhere
\]
\end{proof}

\begin{theorem}\label{thm:convolution} For every $d\in \IN$, we have in $\D(C\times C)$ an isomorphism
\[
 \cQ_d\star \cQ_d^R\cong \reg_{\Delta}\otimes_\IC S^{d-1}\Ho^*(\reg_C)\,.
\]
\end{theorem}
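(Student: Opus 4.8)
The plan is to deduce the statement directly from \autoref{prop:FMSdC}, together with the K\"unneth computation already carried out in the proof of \autoref{prop:cohtautSdC}. Recall from \autoref{lem:RF}(ii) that $\cQ_d\star\cQ_d^R=\pr_{13*}\bigl(\pr_{12}^*\cQ_d\otimes\pr_{32}^*\cQ_d^R\bigr)$, the projections being taken from $C\times\Quot_d\times C$. Since $\pr_{13}\colon C\times\Quot_d\times C\to C\times C$ factors as $\id\times\mu_d\times\id$ followed by $\pr_{13}\colon C\times C^{(d)}\times C\to C\times C$, we would first write
\[
 \cQ_d\star\cQ_d^R\cong \pr_{13*}(\id\times\mu_d\times\id)_*\bigl(\pr_{12}^*\cQ_d\otimes\pr_{32}^*\cQ_d^R\bigr)\cong\pr_{13*}\reg_{\Gamma_{(d)}}
\]
by \autoref{prop:FMSdC}.

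Next I would compute $\pr_{13*}\reg_{\Gamma_{(d)}}$ by hand. By \autoref{subsect:Xi} we have $\reg_{\Gamma_{(d)}}\cong\iota_{(d)*}\reg_{\Xi_d}$, and from the explicit description of $\Gamma_{(d)}=\iota_{(d)}(\Xi_d)$ one reads off that $\pr_{13}\circ\iota_{(d)}$ restricted to $\Xi_d$ equals $\delta\circ a$, where $a\colon\Xi_d\to C$ is the projection of \autoref{subsect:SdC} and $\delta\colon C\hookrightarrow C\times C$ is the diagonal embedding. Hence
\[
 \cQ_d\star\cQ_d^R\cong \delta_*a_*\reg_{\Xi_d}\,.
\]
Under the isomorphism $\Xi_d\cong C\times C^{(d-1)}$ of \eqref{eq:Xiembedding} the morphism $a$ becomes the projection to the first factor, so flat base change along $C^{(d-1)}\to\pt$ gives $a_*\reg_{\Xi_d}\cong\reg_C\otimes_\IC\Ho^*(\reg_{C^{(d-1)}})$, and, exactly as in the proof of \autoref{prop:cohtautSdC}, $\Ho^*(\reg_{C^{(d-1)}})\cong S^{d-1}\Ho^*(\reg_C)$. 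Combining these isomorphisms yields
\[
 \cQ_d\star\cQ_d^R\cong\delta_*\bigl(\reg_C\otimes_\IC S^{d-1}\Ho^*(\reg_C)\bigr)\cong\reg_\Delta\otimes_\IC S^{d-1}\Ho^*(\reg_C)\,.
\]

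I do not expect any genuine obstacle here: the entire difficulty has already been absorbed into \autoref{prop:FMSdC} (hence ultimately into the induction of \autoref{prop:FMpush}). The only point requiring a little care is the bookkeeping with the various $\pr$'s and the identity $\pr_{13}\circ\iota_{(d)}\rest{\Xi_d}=\delta\circ a$, which is immediate from the explicit formulas for $\iota_{(d)}$ and $\Gamma_{(d)}$. As an alternative one could instead invoke \autoref{thm:muHom} with $F=G=\reg_C$ together with \autoref{cor:HomtautFM} and unravel the resulting Fourier--Mukai kernel, but the route through \autoref{prop:FMSdC} sketched above is the shortest.
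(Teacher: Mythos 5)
Your proposal is correct and follows essentially the same route as the paper: both reduce to $\pr_{13*}\reg_{\Gamma_{(d)}}$ via \autoref{prop:FMSdC} and then compute this push-forward by factoring $\pr_{13}$ restricted to $\Gamma_{(d)}\cong C\times C^{(d-1)}$ through the first projection followed by the diagonal $\delta$, invoking the K\"unneth identification $\Ho^*(\reg_{C^{(d-1)}})\cong S^{d-1}\Ho^*(\reg_C)$. Your use of $\iota_{(d)*}\reg_{\Xi_d}$ and the identity $\pr_{13}\circ\iota_{(d)}\rest{\Xi_d}=\delta\circ a$ is just a repackaging of the paper's embedding $j$, so there is no substantive difference.
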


\begin{proof}
 Since $\pr_{13}\colon C\times \Quot_d\times C\to C\times C$ factors as 
 \[
C\times \Quot_d\times C\xrightarrow{\id\times \mu_d\times \id} C\times C^{(d)}\times C \xrightarrow{\pr_{13}} C\times C\,,  
 \]
\autoref{prop:FMSdC} gives
\begin{align*}
\cQ\star \cQ^R\cong \pr_{13*}\bigl(\pr_{12}^*\cQ\otimes \pr_{32}^*\cQ^R\bigr)  \cong   \pr_{13*}(\id\times \mu_d \times \id)_*\bigl(\pr_{12}^*\cQ\otimes \pr_{32}^*\cQ^R\bigr)\cong  \pr_{13*}\reg_{\Gamma_{(d)}}\,. 
\end{align*}
Now, consider the commutative diagram 
\[
\xymatrix{
C\times C^{(d-1)} \ar^j[r] \ar_{\pr_1}[d] &   C\times C^{(d)}\times C \ar^{\pr_{13}}[d] \\
C  \ar^{\delta}[r] &    C\times C
} 
\]
where $j(x;x_1+\dots+x_{d-1})=(x;x_1+\dots+x_{d-1}+x;x)$ is the closed embedding with image $\Gamma_{(d)}$. We get 
\begin{align*}
\pr_{13*}\reg_{\Gamma_{(d)}}\cong \pr_{13*}j_*\reg_{C\times C^{(d-1)}}\cong \delta_*\pr_{1*} \reg_{C\times C^{(d-1)}}&\cong \delta_*\Bigl( \reg_C\otimes_{\IC} \Ho^*(\reg_{C^{(d-1)}})\Bigr)\\&\cong \reg_{\Delta}\otimes_{\IC} S^{d-1}\Ho^*(\reg_C)\,.\qedhere 
\end{align*}
\end{proof}

By \autoref{lem:RF}(ii), \autoref{thm:convolution} implies \autoref{thm:mainfunctor}.

\section{Further remarks}\label{sect:furtherrem}

\subsection{Adding natural line bundles}
Given a line bundle $M\in\Pic(C)$, the associated line bundle  $M_{(d)}:=\pi_*^{\sym_d} M^{\boxtimes d}$ on $\Pic(C^{(d)})$ is the descent of $M^{\boxtimes d}$ along $\pi\colon C^d\to C^{(d)}$, which means that $\pi^*M_{(d)}\cong M^{\boxtimes d}$.
We have the following generalisation of \autoref{prop:cohtautSdC}.

\begin{lemma}\label{lem:tauttwistedcoh}
For $F\in \VB(C)$ and $M\in \Pic(C)$, we have for every $d\in \IN$:
\begin{align}
\Ho^*\Bigl(C^{(d)},  M_{(d)}\Bigr)&\cong S^d\Ho^*(M)\,, \label{eq:Mcoh}   \\
 \Ho^*\Bigl(C^{(d)}, F^{[d]}\otimes M_{(d)}\Bigr)&\cong \Ho^*(F\otimes M)\otimes S^{d-1}\Ho^*(M)\,. \label{eq:tautMcoh}
\end{align}
\end{lemma}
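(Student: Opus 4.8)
The plan is to pull everything back along the flat quotient morphism $\pi_d\colon C^d\to C^{(d)}$ and compute on the cartesian power, exactly as in the proof of \autoref{prop:cohtautSdC}. Since $\pi_d$ is finite and flat with $\pi_{d*}^{\sym_d}\reg_{C^d}\cong \reg_{C^{(d)}}$, for any $\cF\in \D(C^{(d)})$ we have $\Ho^*(C^{(d)},\cF)\cong \Ho^*(C^d,\pi_d^*\cF)^{\sym_d}$, where $\sym_d$ acts on cohomology by its action on $C^d$ (and hence on $\pi_d^*\cF$ via the canonical linearisation of the pulled-back sheaf). So the first step is to identify the relevant pull-backs: by definition $\pi_d^*M_{(d)}\cong M^{\boxtimes d}$, and for the tautological bundle I would use the identification $\pi_d^*F^{[d]}\cong \bigoplus_{i=1}^d \pr_i^*F$ as $\sym_d$-equivariant sheaves (the $\sym_d$-action permuting the summands along with the factors), which is standard for symmetric powers of curves; it follows since $\pi_d^*b_*a^*F\cong (b')_*(a')^*F$ after the base change along $\pi_d$, and the fibre product of $\Xi_d$ with $C^d$ over $C^{(d)}$ decomposes into $d$ graph components $\Gamma_i=\{(x;x_1,\dots,x_d): x=x_i\}$, each mapping isomorphically to $C^d$.

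Granting these, the computation is pure Künneth plus representation theory. For \eqref{eq:Mcoh}: $\Ho^*(C^{(d)},M_{(d)})\cong \bigl(\Ho^*(C^d,M^{\boxtimes d})\bigr)^{\sym_d}\cong \bigl(\Ho^*(M)^{\otimes d}\bigr)^{\sym_d}\cong S^d\Ho^*(M)$, where the last isomorphism is the definition of the graded symmetric power (taking the $\sym_d$-invariants of the $d$-th tensor power in the category of graded vector spaces, with the Koszul sign rule built in; see \cite[Sect.\ 2.2]{Krug--curveExt}). For \eqref{eq:tautMcoh}: using $\pi_d^*(F^{[d]}\otimes M_{(d)})\cong \bigoplus_{i=1}^d \pr_i^*F\otimes M^{\boxtimes d}\cong \bigoplus_{i=1}^d \bigl((F\otimes M)\text{ in slot }i\bigr)\boxtimes M^{\boxtimes(d-1)}$ with $\sym_d$ permuting the $d$ summands, the invariants are computed by first taking $\sym_{d-1}$-invariants over the $d-1$ "passive" factors and then averaging over the remaining transpositions; concretely,
\[
\Bigl(\bigoplus_{i=1}^d \Ho^*(F\otimes M)\otimes \Ho^*(M)^{\otimes(d-1)}\Bigr)^{\sym_d}\cong \Ho^*(F\otimes M)\otimes \bigl(\Ho^*(M)^{\otimes(d-1)}\bigr)^{\sym_{d-1}}\cong \Ho^*(F\otimes M)\otimes S^{d-1}\Ho^*(M)\,,
\]
since an $\sym_d$-invariant element of an induced module $\Ind_{\sym_{d-1}}^{\sym_d}V$ is determined by its component in a single summand, which must be $\sym_{d-1}$-invariant. (Again all symmetric powers are the graded ones, and all identifications respect the grading.)

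The main obstacle is making the $\sym_d$-equivariant identification $\pi_d^*F^{[d]}\cong\bigoplus_{i=1}^d\pr_i^*F$ precise, in particular checking that the decomposition of the base-changed universal subscheme is reduced and that the linearisation is the permutation one — this is where one must invoke, or reprove, the corresponding statement for symmetric powers (cf.\ \cite[Lem.\ 3.2]{Krug--curveExt} for the analogous statement about $D_d$ and $\Xi_d$, and \autoref{lem:piflat} for flatness and smoothness). Once that is in place, everything else is the formal Künneth-and-invariants bookkeeping sketched above, taking care throughout that "invariants" and "symmetric power" are taken in the graded sense so that the Koszul signs are handled correctly.
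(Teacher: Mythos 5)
Your argument for \eqref{eq:Mcoh} is fine and agrees with the paper's. However, the proof of \eqref{eq:tautMcoh} has a genuine gap: the claimed $\sym_d$-equivariant splitting $\pi_d^*F^{[d]}\cong \bigoplus_{i=1}^d\pr_i^*F$ is false for $d\ge 2$. Base change along $\pi_d$ does give $\pi_d^*F^{[d]}\cong b'_*a'^*F$ with $b'\colon \Xi_d\times_{C^{(d)}}C^d\to C^d$, but this fibre product is the subscheme $D_d=\{(x;x_1,\dots,x_d)\mid x\in\{x_1,\dots,x_d\}\}$ of $C\times C^d$, which is the \emph{union}, not the disjoint union, of the $d$ graphs $\Gamma_i$: the graphs meet along the pairwise diagonals, so $\reg_{D_d}$ does not decompose as a direct sum and one only gets a filtration with \emph{twisted} factors. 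Already for $d=2$ and $F=\reg_C$ the standard sequence for a union of divisors gives $0\to \reg_{C^2}(-\Delta)\to \pi_2^*\reg_C^{[2]}\to \reg_{C^2}\to 0$, so $\det\bigl(\pi_2^*\reg_C^{[2]}\bigr)\cong\reg_{C^2}(-\Delta)\not\cong\reg_{C^2}$, whereas your splitting would force a trivial determinant. The failure persists at the level of cohomology, not just of sheaves: for $C=\IP^1$ one has $\Ho^*\bigl(C^2,\pi_2^*\reg_C^{[2]}\bigr)\cong\IC$ (since $\reg_{C^2}(-\Delta)\cong\reg(-1,-1)$ has no cohomology), while your induced module $\bigoplus_{i=1}^2\Ho^*(\reg_C)\otimes\Ho^*(\reg_C)$ is two-dimensional; so the cohomology of $\pi_d^*\bigl(F^{[d]}\otimes M_{(d)}\bigr)$ is not the induced representation you take invariants of, and the fact that the invariants of the wrong module give the right answer is a coincidence, not an argument. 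The reference you invoke only provides reducedness of $D_d$, not a splitting, and taming the twisted filtration equivariantly is precisely the delicate bookkeeping that makes the extension-group computations on $C^{(d)}$ in the cited literature nontrivial.

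The paper avoids $C^d$ entirely for \eqref{eq:tautMcoh}: since $b$ is finite, the projection formula gives $\Ho^*\bigl(C^{(d)},F^{[d]}\otimes M_{(d)}\bigr)\cong \Ho^*\bigl(\Xi_d, a^*F\otimes b^*M_{(d)}\bigr)$, and the key input is the identification $b^*M_{(d)}\cong M\boxtimes M_{(d-1)}$ under $\Xi_d\cong C\times C^{(d-1)}$, proved by uniqueness of descent using the factorisation $\pi_d=b\circ(\id_C\times\pi_{d-1})$; then the K\"unneth formula and the case $d-1$ of \eqref{eq:Mcoh} conclude. If you want to salvage your approach, you would in effect have to pass from $D_d$ to its quotient $D_d/\sym_d\cong\Xi_d$, which collapses back to this argument; the missing step in your write-up is exactly the descent identification of $b^*M_{(d)}$ (or, equivalently, a correct equivariant description of $\pi_d^*F^{[d]}$ replacing the false direct-sum decomposition).
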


\begin{proof}
By the construction of $M_{(d)}$, its cohomology is the $\sym_d$-invariant part of the cohomology of $\Ho^*(C^d, M^{\boxtimes d})\cong \Ho^*(M)^{\otimes d}$. This gives \eqref{eq:Mcoh}.
For \eqref{eq:tautMcoh}, note that $\pi_d\colon C^d\to C^{(d)}$ factors as
\[
 C^d\xrightarrow{\id_C\times \pi_{d-1}}C\times C^{(d-1)}\cong \Xi_d \xrightarrow{b} C^{(d)}\,.
\]
Hence, under the isomorphisms $C\times C^{(d-1)}\cong \Xi_d$, we have $b^*M_{(d)}\cong M\boxtimes M_{(d-1)}$, by the uniqueness of the descent. Now, projection formula gives
\[
 F^{[n]}\otimes M_{(d)}\cong b_*(F\boxtimes \reg_{C^{(d-1)}})\otimes M_{(d)}\cong b_* \bigl((F\otimes M)\boxtimes M_{(d-1)}\bigr)\,.
\]
Hence, using the $d-1$ case of \eqref{eq:Mcoh}, we get
\[
\Ho^*\Bigl(C^{(d)}, F^{[d]}\otimes M_{(d)}\Bigr)\cong \Ho^*\Bigl(C\times C^{(d-1)}, (F\otimes M)\boxtimes M_{(d-1)}\Bigr)\cong \Ho^*(F\otimes M)\otimes S^{d-1}\Ho^*(M)\,.\qedhere
\]
\end{proof}
We define the \emph{natural} line bundle on $\Quot_d$ induced by $M\in \Pic(C)$ by
\[
M_{\llp d\rrp}:=\mu_d^* M_{(d)}\,.
\]
All our formulas for cohomologies and extension groups follow from formulas for the push forwards along $\mu_d\colon \Quot_d(E)\to C^{(d)}$. Hence, using the projection formula, we get slightly more general formulas  by adding natural line bundles in \eqref{eq:cohformula}, \eqref{eq:cohdualformula}, and \eqref{eq:Extformula}.

\begin{theorem}
Let $E$ be a vector bundle of rank at least 2 over a smooth projective curve $C$ and $d\in \IN$.
\begin{enumerate}
 \item For every $F\in \VB(C)$ and every $M\in \Pic(C)$, we have
\[
 \Ho^*\left(\Quot_d(E), F^{\llbracket d\rrbracket}\otimes M_{\llp d\rrp}\right)\cong \Ho^*(E\otimes F\otimes M)\otimes S^{d-1}\Ho^*(M)\,.
\]
\item Let $F_1,\dots, F_m$ be vector bundles on $C$, and let $1\le k_i\le d\rk F_i=\rk F_i^{\llb d \rrb}$ such that
$\sum_{i=1}^m\min\{k_i, \rk F_i\}<\rk E$. Then, for every $M\in \Pic(C)$, we have
\[
 \Ho^*\left(\Quot_d(E),M_{\llp d\rrp}\otimes \bigotimes_{i=1}^m\wedge^{k_i} F_i^{\llb d\rrb\vee}\right)\cong 0\,.
\]
\item For all $F,G\in \VB(C)$ and all $K,M\in \Pic(C)$, we have
\[
  \Ext^*\left(F^{\llb d\rrb}\otimes K_{\llp d\rrp},G^{\llb d\rrb}\otimes M_{\llp d\rrp}\right)\cong \Ext^*(F\otimes K,G\otimes M)\otimes S^{d-1}\Ext^*(K,M) \,.
\]
\end{enumerate}
\end{theorem}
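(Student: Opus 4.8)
The plan is to reduce all three formulas to the push-forward statements along $\mu_d\colon \Quot_d(E)\to C^{(d)}$ that were already established, using the projection formula together with the defining property $M_{\llp d\rrp}=\mu_d^*M_{(d)}$ of the natural line bundles, and then to read off the answer from the cohomology computations on the symmetric product collected in \autoref{lem:tauttwistedcoh}. So the whole argument is a bookkeeping exercise with the previously proved results.

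For part (i), the first step is to apply the projection formula along $\mu_d$, giving $R\mu_{d*}\bigl(F^{\llb d\rrb}\otimes M_{\llp d\rrp}\bigr)\cong \bigl(R\mu_{d*}F^{\llb d\rrb}\bigr)\otimes M_{(d)}$, which by \autoref{thm:tautmupush} equals $(E\otimes F)^{[d]}\otimes M_{(d)}$. Taking cohomology over $C^{(d)}$ and applying \eqref{eq:tautMcoh} with $F$ replaced by $E\otimes F$ then produces the claimed $\Ho^*(E\otimes F\otimes M)\otimes S^{d-1}\Ho^*(M)$. Part (ii) is literally the same projection-formula step, $R\mu_{d*}\bigl(M_{\llp d\rrp}\otimes\bigotimes_{i=1}^m\wedge^{k_i}F_i^{\llb d\rrb\vee}\bigr)\cong M_{(d)}\otimes R\mu_{d*}\bigl(\bigotimes_{i=1}^m\wedge^{k_i}F_i^{\llb d\rrb\vee}\bigr)$, where the second tensor factor vanishes by \autoref{thm:dualvanish} under the hypothesis $\sum_i\min\{k_i,\rk F_i\}<\rk E$; hence the whole push-forward vanishes, and therefore so does the cohomology.

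For part (iii), I would begin with the identity $\sHom\bigl(F^{\llb d\rrb}\otimes K_{\llp d\rrp},\,G^{\llb d\rrb}\otimes M_{\llp d\rrp}\bigr)\cong \sHom(F^{\llb d\rrb},G^{\llb d\rrb})\otimes \mu_d^*\bigl(K_{(d)}^\vee\otimes M_{(d)}\bigr)$, valid because $K_{\llp d\rrp},M_{\llp d\rrp}$ are line bundles pulled back from $C^{(d)}$. The next step is to identify $K_{(d)}^\vee\otimes M_{(d)}$ with $\sHom(K,M)_{(d)}$: both pull back along $\pi_d\colon C^d\to C^{(d)}$ to $(K^\vee\otimes M)^{\boxtimes d}$, so they agree by uniqueness of descent. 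Then the projection formula together with \autoref{thm:muHom} gives $R\mu_{d*}$ of the Hom-sheaf equal to $\sHom(F,G)^{[d]}\otimes \sHom(K,M)_{(d)}$, and taking cohomology and applying \eqref{eq:tautMcoh} with $F\rightsquigarrow \sHom(F,G)$ and $M\rightsquigarrow \sHom(K,M)$ yields $\Ho^*\bigl(\sHom(F,G)\otimes \sHom(K,M)\bigr)\otimes S^{d-1}\Ho^*\bigl(\sHom(K,M)\bigr)$, which is $\Ext^*(F\otimes K,G\otimes M)\otimes S^{d-1}\Ext^*(K,M)$ since $\sHom(F,G)\otimes \sHom(K,M)\cong \sHom(F\otimes K,G\otimes M)$.

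There is no real obstacle: the entire content sits in the already-proven push-forward theorems \autoref{thm:tautmupush}, \autoref{thm:dualvanish}, \autoref{thm:muHom} and in \autoref{lem:tauttwistedcoh}. The only points that need a moment's care are the descent identification $K_{(d)}^\vee\otimes M_{(d)}\cong \sHom(K,M)_{(d)}$ and keeping track of the graded (super-)symmetric and wedge powers when invoking \eqref{eq:tautMcoh}; both are routine. Taking $M=K=\reg_C$ recovers \eqref{eq:cohformula}, \eqref{eq:cohdualformula} and \eqref{eq:Extformula}.
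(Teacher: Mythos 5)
Your proposal is correct and follows essentially the same route as the paper: projection formula along $\mu_d$ combined with the push-forward results (\autoref{thm:tautmupush}, \autoref{thm:dualvanish}, \autoref{thm:muHom}) and then \autoref{lem:tauttwistedcoh}. The paper only writes out part (i) and declares the rest ``similar''; your parts (ii) and (iii), including the descent identification $K_{(d)}^\vee\otimes M_{(d)}\cong (K^\vee\otimes M)_{(d)}$, are exactly the intended completion.
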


\begin{proof}
 We only prove part (i), the other parts are similar. By the projection formula and \autoref{thm:maincoh}, we have
 \[
  R\mu_*\bigl(F^{\llbracket d\rrbracket}\otimes M_{\llp d\rrp}  \bigr)\cong \bigl( R\mu_*F^{\llbracket d\rrbracket}  \bigr) \otimes M_{(d)}\cong (E\otimes F)^{[d]}\otimes M_{(d)}\,.
 \]
Now, the assertion follows by \autoref{lem:tauttwistedcoh}.
\end{proof}

\subsection{Semi-orthogonal decomposition for $C=\IP^1$}

By \cite[Cor.\ 9.1]{BGS}, we have \[R\mu_{*} \reg_{\Quot_d}\cong \reg_{C^{(d)}}\] for every smooth projective curve $C$ and every $d\in \IN$. Hence, by projection formula, the functor $\mu^*\colon \D(C^{(d)})\to \D(\Quot_d)$ is fully faithful.  

For $C=\IP^1$ the projective line, we have $\Ho^*(\reg_{\IP^1})\cong \IC$. Thus, \autoref{thm:mainfunctor} gives in this special case $R_d\circ T_d\cong \id_{\D(C)}$ for all $d\in \IN$. Hence, the tautological functor \[T_d\colon \D(\IP^1)\to \D(\Quot_d)\] is fully faithful. By the $m=1$ case of \autoref{cor:dualvanish}(i), we have $R\mu_*\circ T_d\cong 0$. Hence, we have a semi-orthogonal decomposition
\begin{equation}\label{eq:sod}
\D(\Quot_d)=\langle \mu^*\D(\IP^{1(d)}), T_d \D(\IP^1), \,?\, \rangle\,. 
\end{equation}
In \cite{Toda--Quotsod}, for an arbitrary smooth projective curve $C$, Toda constructed a much finer semi-orthogonal decomposition than \eqref{eq:sod} by very different methods. 

As the variety $\Flag_d$ is an iterated projective bundle, it has semi-orthogonal decompositions with many components too. Hence, one may wonder whether the pull-back along $f\colon \Flag_d\to \Quot_d$ can somehow be used to give another proof of the semi-orthogonal decomposition of \emph{loc.\ cit}.,\ or maybe to construct another semi-orthogonal decomposition of $\Quot_d$.

\bibliographystyle{alpha}
\addcontentsline{toc}{chapter}{References}
\bibliography{references}

\end{document}